\newtheorem{theorem}{Theorem}[section]
\newtheorem{lemma}[theorem]{Lemma}
\newtheorem{corollary}[theorem]{Corollary}
\newtheorem{proposition}[theorem]{Proposition}
\theoremstyle{remark}
\theoremstyle{definition}
\numberwithin{equation}{section}
\DeclareMathOperator{\Cdb}{{\mathbb C}}
\DeclareMathOperator{\Rdb}{{\mathbb R}}
\DeclareMathOperator{\Ddb}{{\mathbb D}}
\DeclareMathOperator{\Tdb}{{\mathbb T}}
\DeclareMathOperator{\Ndb}{{\mathbb N}}
\begin{document}

\title[Order theory and operator algebras]{Order theory and interpolation in operator algebras} 

\date{\today}
\thanks{The first author was supported by NSF grant  DMS 1201506.   
The second author is grateful for  support from UK research council
grant  EP/K019546/1.  Many of the more recent results here were presented at COSy 2014 at the Fields Institute.}

\author{David P. Blecher}
\address{Department of Mathematics, University of Houston, Houston, TX
77204-3008}
\email[David P. Blecher]{dblecher@math.uh.edu}

\author{Charles John Read}
\address{Department of Pure Mathematics,
University of Leeds,
Leeds LS2 9JT,
England}
 \email[Charles John Read]{read@maths.leeds.ac.uk}

\begin{abstract}    In earlier papers we have introduced and studied a new 
notion of positivity in operator algebras,  with an eye to
extending certain $C^*$-algebraic results and theories to more 
general algebras.   
Here we continue to  develop
this positivity and its associated ordering, proving many 
foundational facts.  We also give many applications, for example
 to noncommutative topology, noncommutative peak sets,
lifting problems,
peak interpolation, approximate identities, and to order relations between an operator 
algebra and the $C^*$-algebra it generates.   In much of this it is not necessary that the algebra have an approximate identity.
Many of our results apply immediately to function algebras, but we will not 
take the time to point these out, although most of these applications seem new.    
  \end{abstract}

\maketitle

\section{Introduction} 

An {\em operator algebra} is a closed subalgebra of $B(H)$, for a
Hilbert space $H$.  In much of the paper our operator algebras have contractive approximate
identities (cai's), and we call auch algebras {\em approximately unital}. 
 
In earlier papers \cite{BRI,BRII,Read} we introduced and studied a new notion of positivity in operator algebras.
We have shown elsewhere that the `completely  positive' maps on C*-algebras or operator systems in our new sense
are precisely the completely positive maps in the usual sense; however the new 
notion of positivity allows the development of useful order theory for more general spaces and algebras. 
Our main goals are to 
extend certain useful $C^*$-algebraic results and theories to more 
general algebras; and also to develop `noncommutative function theory'
in the sense of generalizing certain parts of 
the classical theory of function spaces and algebras \cite{Gam}.  Simultaneously we are 
developing applications (see also e.g.\ \cite{BNI,BNII} with Matthew Neal, and \cite{Bnpi}).  
With the same goals in mind, in the present paper, we continue the development
of foundational aspects of this positivity, and 
of the associated ordering for operator algebras.  We also give many applications, for example
 to noncommutative topology, noncommutative peak sets,
lifting problems,
peak interpolation, approximate identities, and to order relations between an operator 
algebra and the $C^*$-algebra it generates. 

Before proceeding further, 
we make an editorial/historical note:  approximately half  of 
the present paper was formerly part of a preprint \cite{BRIII}.  The latter
 has been split into several papers, each of which has taken on
a life of its own, e.g.\ the present paper which focuses on 
order in operator algebras,
and  \cite{BOZ} which covers the more 
general setting of Banach algebras.  The reader is encouraged to browse the latter paper for complementary theory;
we will not prove results here that may be found in \cite{BOZ} except if there
is a much simpler proof in the operator algebra setting.   

As in the aforementioned papers, a central role is played by the set ${\mathfrak F}_A = \{ a \in A : 
\Vert 1 - a \Vert \leq 1 \}$ (here $1$ is the identity of the unitization $A^1$ if $A$ is nonunital).   We will be 
interested in four `cones' or  notions of `positivity' in $A$, and the relations between them.
The biggest of these is the set of {\em accretive} operators
$${\mathfrak r}_A = \{ a \in A : {\rm Re}(x)  \, = \, a + a^* \geq 0 \},$$
namely the elements of $A$ whose numerical range in $A^1$ is contained in the closed right half plane.
This has as a dense subcone 
$${\mathfrak c}_A = 
\Rdb_+ \, {\mathfrak F}_A,$$
(see e.g.\ \cite[Theorem 3.3]{BRII}).  In turn the latter cone 
contains as a dense subcone (see Lemma \ref{rootf})  the cone of sectorial operators of angle $\rho < \frac{\pi}{2}$, which we use  less frequently.
By 
sectorial angle $\rho$ we mean that the numerical
range is contained in the sector $S_\rho$ consisting of numbers $re^{i \theta}$ with
argument $\theta$ such that $|\theta| < \rho$ (c.f.\  e.g.\ \cite{Haase,NF}).     The fourth notion, 
`near positivity', is more subtle.   If in the statement of a result an element of $A$ is described as `nearly positive', this means that if $\epsilon > 0$ is given one 
can choose $x$ in the statement to be in the previous three cones, but also 
sectorial with angle $\rho$ so small that  $x$ is within distance $\epsilon$ of an actual positive operator.
Note that if an operator $x$ is sectorial with angle $\rho$ so small that $\Vert x \Vert \, \sin \rho < \epsilon$ for example, then Re$(x) \geq 0$ and 
$$\Vert x - {\rm Re} \, (x) \Vert = \Vert {\rm Im} \, (x) \Vert = \sup \{ | {\rm Im} \,  \langle x \zeta , \zeta \rangle | : \zeta \in {\rm Ball}(H) \} < \epsilon,$$
 so that $x$ is within distance $\epsilon$ of the positive operator Re$(x)$.    Such nearly positive operators  usually arise because 
${\mathfrak r}_A$ is closed under taking (principal) roots, and the $n$th root of an accretive operator is sectorial with angle as small as desired for $n$ large enough.     We will also usually require our
nearly positive operators to be in $\frac{1}{2} {\mathfrak F}_A$ too.

Elements of these `cones', and their roots, play the role in many situations of positive elements
in a $C^*$-algebra.    There are some remarkable relationships between operator algebras and the 
classical theory of ordered linear spaces (due to Krein, Ando, Alfsen, and many others).
We mention some examples of this (and see \cite{BOZ}, particularly Section 6 there, for
 more): 
In the language of ordered Banach spaces, an operator algebra is approximately unital iff  ${\mathfrak r}_A$ 
and ${\mathfrak c}_A$ are  {\em generating} cones (this is sometimes 
called {\em positively generating} or {\em directed} or {\em co-normal}).  That is, iff $A = {\mathfrak r}_A - {\mathfrak r}_A$,
for example.   Read's theorem states that any approximately unital operator algebra
has a cai in $\frac{1}{2} {\mathfrak F}_A$ (see \cite{Read}, although there are now several  much shorter proofs
\cite{Bnpi,BOZ}), and indeed, by taking roots, nearly positive.   We will
show that $A$ is {\em cofinal} in any $C^*$-algebra $B$ which it generates.
Indeed given any $b \in B_+$ and $\epsilon > 0$
there exists nearly positive $a \in A$
with $b \preccurlyeq a \preccurlyeq (\Vert b \Vert + \epsilon) 1$ in the ordering induced by the cone above.
We will also investigate the relationship between such results
 and `noncommutative peak interpolation'.

Turning to the layout of our paper, in Section
2 we study general properties of these cones and the related ordering.  This is
a collection of results on positivity, some of which are used elsewhere  in this 
paper, or in other papers, and some of which are of independent interest.   
In particular we prove several surprizing order theoretic
properties, some of which are new relations between an operator algebra
and the $C^*$-algebra it generates.  Many of these order theoretic properties turn out to be equivalent 
to the existence of a cai. 
The short Section 3 studies  `strictly positive' elements, a topic that is quite important for $C^*$-algebras.  
The lengthy Section 4 concerns applications to noncommutative topology, noncommutative peak sets,
lifting problems,
 and peak interpolation.
First we 
present versions of some of our previous Urysohn lemmas and peak interpolation 
results for operator algebras (see e.g.\ \cite{BRI, BNII}), but 
now insisting that the `interpolating element'
is `nearly positive'  in the sense defined above (and also in
$\frac{1}{2} {\mathfrak F}_A$).  This also 
solves the problems raised at the end of \cite{BNII}.   We also
prove a Tietze extension theorem for operator algebras, and a strict form of the 
Urysohn lemma for operator algebras, generalizing the usual strict form of the 
Urysohn lemma from topology, and also generalizing Pedersen's strict noncommutative Urysohn lemma
from \cite{PedS}.  
  See \cite{CGK} for a recent paper 
containing  `Urysohn lemmas' for function algebras; our Urysohn lemma applied to the algebras considered there is more general  (see
the discussion after Theorem \ref{urys}).  
Indeed many results in our paper apply immediately to function algebras (uniform algebras),
that is to uniformly closed subalgebras of $C(K)$, since these
are special cases of operator algebras.  We will not take the time
to point these out, although most of these applications seem new. 

We now turn to notation and some background facts (for more details the reader should consult our previous papers
in this series 
and \cite{BLM}).
 In this paper $H$ will always be a Hilbert space, usually the Hilbert space
on which our operator algebra is acting, or is completely isometrically represented.
We recall that by a theorem due to  Ralf Meyer,
every operator algebra $A$ has a  unitization $A^1$ 
which is unique up
to completely isometric homomorphism (see
 \cite[Section 2.1]{BLM}). Below $1$ always refers to
the identity of $A^1$ if $A$ has no identity.   We almost always
set $A^1 = A$ if $A$ already has an identity.  We write 
oa$(x)$  for the operator algebra generated by $x$ in $A$,
the smallest closed subalgebra containing $x$.   We will often use 
$C^*$-algebras generated by an operator algebra $A$  (or containing $A$ completely 
isometrically as a subalgebra).   For example,
the disk algebra $A(\Ddb)$ generates $C(\Tdb), C(\bar{\Ddb})$, and the Toeplitz $C^*$-algebra (here $\Tdb$ and $\Ddb$
represent the circle and open unit disk respectively).  However we want anything we say about an operator algebra $A$ 
to be independent of which particular generated $C^*$-algebra was used.  
A {\em state} of an approximately unital
operator algebra $A$ is a functional with $\Vert \varphi \Vert = \lim_t \, \varphi(e_t) = 1$ 
for some (or any) cai $(e_t)$ for $A$.  These extend to states of $A^1$.  They also 
extend to a state on any $C^*$-algebra $B$ generated by $A$, and conversely
any state on $B$ restricts to a state of $A$.  See \cite[Section 2.1]{BLM}
for details.    If $A$ is not approximately unital then we define a state on $A$  
to be a norm $1$ functional that extends to a state
on $A^1$.
We  write $S(A)$ for the collection of such states; this is the  {\em state space} of $A$.   These 
extend further by the Hahn-Banach theorem to a state
on any $C^*$-algebra generated by $A^1$, and therefore restrict 
to a positive functional on any $C^*$-algebra $B$ generated by $A$. 
The latter restriction is actually a state, since it has norm $1$
(even on  $A$).  
Conversely, every state on $B$ 
extends to a state on $B^1$, and this restricts to a state on $A^1$.
From these considerations it is easy to see that states on an operator algebra $A$ may equivalently 
be defined to be norm $1$ functionals that extend to a state
on any $C^*$-algebra $B$ generated by $A$.

For  us a {\em projection}
is always an orthogonal projection, and an {\em idempotent} merely
satisfies $x^2 = x$. If $X, Y$ are sets, then $XY$ denotes the
closure of the span of products of the form $xy$ for $x \in X, y \in
Y$.   We write $X_+$ for the positive operators (in the usual sense) that happen to
belong to $X$. We write $M_n(X)$ for the space of $n \times n$ matrices 
over $X$, and of course $M_n = M_n(\Cdb)$.  
  The
second dual $A^{**}$ is also an operator algebra with its (unique)
Arens product, this is also the product inherited from the von Neumann
algebra $B^{**}$ if
$A$ is a subalgebra of a $C^*$-algebra $B$. 
Note that
$A$ has a cai iff $A^{**}$ has an identity $1_{A^{**}}$ of norm $1$,
and then $A^1$ is sometimes identified with $A + \Cdb 1_{A^{**}}$.

For an operator algebra, not necessarily approximately  unital,
we recall that $\frac{1}{2} {\mathfrak F}_A = \{ a \in A : \Vert 1 - 2 a \Vert \leq 1 \}$.   Here $1$ is
the identity of the unitization $A^1$ if $A$ is nonunital.  
As we said, $A^1$ is uniquely defined, and can be viewed 
as $A + \Cdb I_H$ if $A$ is completely isometrically represented 
as a subalgebra of $B(H)$.  Hence so is
$A^1 + (A^1)^*$ uniquely defined, by e.g.\ 1.3.7 in \cite{BLM}.  
We define $A + A^*$ to be the obvious subspace
of $A^1 + (A^1)^*$.  This is well defined independently 
of the particular Hilbert space $H$ on which $A$
is represented, as shown at the start of Section 3 in \cite{BRII}. 
 Thus a statement such as
$a + b^* \geq 0$ makes sense whenever $a, b \in A$, and is independent of the particular $H$ on which $A$
is represented.  
This gives another way of seeing that  the set ${\mathfrak r}_A = \{ a \in A : a + a^* \geq 0 \}$ is
independent of the particular representation too.  

Note that $x \in {\mathfrak c}_A = \Rdb_+ {\mathfrak F}_A$  iff there is a positive
constant $C$ with $x^* x \leq C(x+x^*)$.  

We recall that an {\em r-ideal} is a right ideal with a left cai, and an {\em $\ell$-ideal} is a left ideal with a right cai.
We say that an operator algebra $D$ with cai, which is a subalgebra of
another operator algebra $A$, is a HSA (hereditary subalgebra)
in $A$, if $DAD \subset D$.    See
\cite{BHN} for the basic theory of HSA's.
HSA's in $A$ are in an order preserving,
bijective correspondence with the r-ideals in $A$, and
 with the $\ell$-ideals in $A$.   Because of this symmetry
we will usually restrict our results to the r-ideal case; the $\ell$-ideal case will be analogous.
There is also a bijective correspondence with the
{\em open projections} $p \in A^{**}$, by which we mean that there
is a net $x_t \in A$ with $x_t = p x_t  \to p$ weak*,
or equivalently with $x_t = p x_t p \to p$ weak* (see  \cite[Theorem 2.4]{BHN}).  These are
also the open projections $p$ in the sense of Akemann \cite{Ake2} in $B^{**}$, where $B$ is a $C^*$-algebra containing $A$, such that
$p \in A^{\perp \perp}$.   If $A$ is approximately unital then the complement $p^\perp = 1_{A^{**}} - p$
 of an open projection for $A$
 is called a {\em closed projection} for $A$.   A closed projection $q$ for which there exists an $a \in {\rm Ball}(A)$ with 
$aq = qa = q$ is called {\em compact}.  This is equivalent to 
$A$ being a closed projection with respect to 
$A^1$, if $A$ is approximately unital.  See \cite{BNII, BRII} for the theory of compact projections in operator algebras.  

If $x \in {\mathfrak r}_A$ then it is shown in \cite[Section 3]{BRII}  that the operator algebra oa$(x)$ generated by $x$ in $A$ has a cai, which can be taken to be a normalization of $(x^{\frac{1}{n}})$, and the weak* limit of $(x^{\frac{1}{n}})$  is the support projection $s(x)$ for $x$.  This is an open projection, and in 
a separable operator algebra these are the only open projections.  In a unital operator algebra the complement of an open projection (different from $1$)
 is
a peak projection, thus in separable unital operator algebra the peak projections are exactly the closed projections.  There are many equivalent definitions of peak projections (see e.g.\ \cite{Hay,BHN,BNII,BRII}).  
For any operator algebra $A$ we recall that if  $x$ is a norm $1$ element
of $\frac{1}{2} \, {\mathfrak F}_A$ then the {\em peak projection} associated with 
$x$  is   $u(x) = \lim_n \, x^n$.  This is the weak* limit, which always exists and is
nonzero  \cite[Corollary 3.3]{BNII}.  For other contractions $x$ this weak* limit may not exist or may be zero,  
but if  this weak* limit does exist and is nonzero then it is a peak projection.  We have $u(x^{\frac{1}{n}}) = u(x)$, for 
$x \in \frac{1}{2} \, {\mathfrak F}_A$ (see \cite[Corollary 3.3]{BNII}).  Compact projections
in approximately unital algebras are precisely the infima (or decreasing weak* limits) of collections of such
peak projections  \cite{BNII}.      We will say more about peak projections around Lemma \ref{chux}.

In this paper we will sometimes use the word `cigar' for 
the wedge-shaped region consisting of numbers $re^{i \theta}$ with
argument $\theta$ such that $|\theta| < \rho$ (for some fixed small $\rho > 0$), 
which are also inside
the circle $|z - \frac{1}{2}| \leq \frac{1}{2}$.
If $\rho$ is small enough so that $|{\rm Im}(z)| < \epsilon/2$ for all $z$ in this
region,  then we will call this a `horizontal cigar of height $< \epsilon$ centered on the 
line segment $[0,1]$ in the $x$-axis'.

 By {\em numerical range}, we will mean the one defined by states, while the literature we quote
usually uses the one defined by vector states on $B(H)$.  However since the former range is the 
closure of the latter, as is well known, this will cause no difficulties.  
For any operator $T \in B(H)$ whose numerical range does not include strictly negative
numbers, and for any $\alpha \in [0,1]$, there is a well-defined `principal' root $T^\alpha$,
which obeys the usual law $T^\alpha T^\beta = T^{\alpha + \beta}$ if $\alpha + \beta \leq 1$
(see e.g.\ \cite{MP,LRS}).  If the numerical range   
is contained in a sector $S_\psi = \{ r e^{i \theta} : 0 \leq r , \, \text{and} \,  
-\psi \leq \theta \leq \psi \}$ where $0 \leq \psi < \pi$, then things are better still.  For fixed $\alpha \in (0,1]$
there is a constant $K > 0$ with $\Vert T^\alpha - S^\alpha \Vert \leq K \Vert  T - S \Vert^\alpha$ for operators $S, T$ with numerical range
in $S_\psi$ (see \cite{MP,LRS}).   Our operators $T$ will in fact be accretive (that is,
 $\psi \leq \frac{\pi}{2}$), 
and then these                 
powers obey the usual laws such as  $T^\alpha T^\beta = T^{\alpha + \beta}$
for all $\alpha, \beta > 0$,
$(T^\alpha)^\beta
= T^{\alpha \beta}$ for $\alpha \in (0,1]$ and any $\beta > 0$, 
and $(T^*)^\alpha  = (T^\alpha)^*$.     We shall see in Lemma \ref{rootf} that if  $\psi < \frac{\pi}{2}$ then $T
\in {\mathfrak c}_{B(H)}$.  The numerical range 
of $T^\alpha$ lies in $S_{\alpha \frac{\pi}{2}}$  for any $\alpha \in (0,1)$.  Indeed if  $n \in \Ndb$ then $T^{\frac{1}{n}}$ is the unique $n$th root of $T$ with numerical 
range in $S_{\frac{\pi}{2 n}}$.   See e.g.\ \cite[Chapter IV, Section 5]{NF}, \cite{Haase}, and \cite{LRS} 
for all of these facts.   Some of the following facts are no doubt also in the literature, since we do not know of a reference
we sketch short proofs.

\begin{lemma} \label{roots}   For an accretive operator $T \in B(H)$ we have:
\begin{itemize} \item [(1)]   $(cT)^\alpha  = c^\alpha T^\alpha$ for positive scalars $c$, and $\alpha \geq 0$.
\item [(2)]  $\alpha \mapsto T^{\alpha}$ 
is continuous on $(0,\infty)$.
\item [(3)]    $T^\alpha \in {\rm oa}(T)$,  the operator algebra generated by $T$, if $\alpha > 0$.
\end{itemize} 
\end{lemma}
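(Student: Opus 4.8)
The plan is to handle all three parts through a single device: the one genuine difficulty is that the branch point of $z \mapsto z^\alpha$ sits at the origin, which may lie in the spectrum of $T$; I will remove it by passing from $T$ to the invertible shift $T + \epsilon I$ (whose spectrum lies in the closed right half plane and stays bounded away from the cut $(-\infty,0]$) and then letting $\epsilon \to 0$ via the Hölder estimate $\Vert (T+\epsilon I)^\alpha - T^\alpha \Vert \leq K \epsilon^\alpha$ quoted above. Throughout I will use that $B = {\rm oa}(T)^1$ is a commutative unital Banach algebra, that ${\rm oa}(T)$ is a closed two-sided ideal in $B$, and that $\sigma_B(T)$, being contained in the convex hull of the closed numerical range $\overline{W(T)}$, lies in the closed right half plane.

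For (1) I would first treat $\alpha = \frac1n$. Since $c > 0$, the numerical range of $cT$ is $c\,\overline{W(T)} \subseteq S_{\pi/2}$, so $cT$ is accretive and $(cT)^{1/n}$ is the unique $n$th root of $cT$ with numerical range in $S_{\pi/(2n)}$. Now $c^{1/n} T^{1/n}$ is such a root: its $n$th power is $c\,T$, and its numerical range is $c^{1/n}\,W(T^{1/n}) \subseteq S_{\pi/(2n)}$ since positive dilation preserves each sector. Uniqueness forces $(cT)^{1/n} = c^{1/n} T^{1/n}$. The power laws $(S^{1/q})^p = S^{p/q}$ then promote this to all rationals $p/q$ (the case $\alpha=0$ being trivial), and finally both $\alpha \mapsto (cT)^\alpha$ and $\alpha \mapsto c^\alpha T^\alpha$ are continuous by part (2) and agree on the rationals, hence everywhere on $[0,\infty)$.

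For (3), fix $\alpha \in (0,1]$ and set $g_\epsilon(z) = (z+\epsilon)^\alpha - \epsilon^\alpha$. This is holomorphic on a neighbourhood of the closed right half plane, hence of $\sigma_B(T)$, and vanishes at $0$, so it factors as $g_\epsilon(z) = z\, h_\epsilon(z)$ with $h_\epsilon$ holomorphic there; by the Riesz--Dunford calculus $g_\epsilon(T) = T\, h_\epsilon(T) \in T \cdot B \subseteq {\rm oa}(T)$, using that ${\rm oa}(T)$ is an ideal. But $g_\epsilon(T) = (T+\epsilon I)^\alpha - \epsilon^\alpha I$, and the Hölder estimate gives $\Vert (T+\epsilon I)^\alpha - T^\alpha \Vert \leq K \epsilon^\alpha \to 0$ while $\epsilon^\alpha I \to 0$; hence $T^\alpha = \lim_{\epsilon \to 0} g_\epsilon(T)$ lies in the closed set ${\rm oa}(T)$. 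For $\alpha > 1$ I would simply write $T^\alpha = T^{\lfloor \alpha \rfloor} T^{\alpha - \lfloor \alpha \rfloor}$, both factors already being in ${\rm oa}(T)$.

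Part (2) is where the branch point bites hardest, and it is the step I expect to be the main obstacle. For each $\epsilon > 0$ the operator $T + \epsilon I$ is invertible with spectrum in a sector $S_\psi$, $\psi < \pi$, bounded away from $0$, so $\log(T+\epsilon I)$ is bounded and $\alpha \mapsto (T+\epsilon I)^\alpha = e^{\alpha \log(T + \epsilon I)}$ is (real-analytically) continuous, as is $\alpha \mapsto \epsilon^\alpha$. It then suffices to show that $(T+\epsilon I)^\alpha - \epsilon^\alpha I \to T^\alpha$ \emph{uniformly} for $\alpha$ in compact subintervals of $(0,\infty)$, so that $T^\alpha$, a locally uniform limit of continuous functions of $\alpha$, is continuous. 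On $[a,b] \subseteq (0,1]$ this uniformity is exactly the Hölder estimate, provided the constant $K = K(\alpha)$ stays bounded on $[a,b]$ (the quantitative input I would extract from \cite{MP,LRS}); for $b > 1$ I would bootstrap by writing $(T+\epsilon I)^\alpha - T^\alpha = (T+\epsilon I)\bigl[ (T+\epsilon I)^{\alpha-1} - T^{\alpha-1} \bigr] + \epsilon\, T^{\alpha-1}$ and iterating. The delicate point is precisely this uniform control of the approximation as $\alpha$ ranges over a compact set while $\sigma(T)$ may accumulate at the branch point $0$; it is the regularising shift, rather than any single Riesz--Dunford contour (which cannot enclose $0$ while avoiding the cut), that makes the argument go through.
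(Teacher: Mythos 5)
Your proposal is correct in outline, and while part (1) matches the paper's argument (uniqueness of the $n$th root with numerical range in $S_{\pi/(2n)}$, then rationals, then continuity from (2)), your treatments of (2) and (3) take genuinely different routes. For (3) the paper simply quotes its earlier work (\cite[Section 3]{BRII}) for $T^{1/n}\in{\rm oa}(T)$, writes $T^{m/n}=(T^{1/n})^m$, and invokes the continuity in (2); your argument --- factoring $(z+\epsilon)^\alpha-\epsilon^\alpha=z\,h_\epsilon(z)$, applying the Riesz--Dunford calculus (legitimate, since $\sigma_{{\rm oa}(T)^1}(T)$ lies in the closed right half plane, away from the cut of $h_\epsilon$), using that ${\rm oa}(T)$ is an ideal in ${\rm oa}(T)^1$, and letting $\epsilon\to 0$ via the H\"older estimate --- is a self-contained alternative that bypasses the rational case entirely. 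For (2) the paper first reduces to $T\in\frac{1}{2}{\mathfrak F}_{B(H)}$ (by the H\"older estimate and then scaling via (1)) and extracts an explicit modulus of continuity from von Neumann's inequality applied to $f(z)=((1-z)/2)^{\alpha}-((1-z)/2)^{\beta}$ on the disk, whereas you realise $T^\alpha$ as a locally uniform limit of the real-analytic maps $\alpha\mapsto e^{\alpha\log(T+\epsilon I)}$. The one soft spot, which you flag yourself, is that your route needs the constant $K(\alpha)$ in $\Vert S^\alpha-T^\alpha\Vert\le K(\alpha)\Vert S-T\Vert^{\alpha}$ to be bounded on compact subsets of $(0,1]$; this is available from the explicit constants in \cite{LRS} (and the paper's own triangle-inequality reduction tacitly relies on the same local boundedness), so it is a quotable fact rather than a gap, but the von Neumann trick buys an entirely explicit, $\alpha$-uniform bound with no such appeal. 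One minor repair: your bootstrap for exponents above $1$ via $(T+\epsilon I)^{\alpha}-T^{\alpha}=(T+\epsilon I)\bigl[(T+\epsilon I)^{\alpha-1}-T^{\alpha-1}\bigr]+\epsilon T^{\alpha-1}$ degenerates as $\alpha\downarrow 1$, since the exponent $\alpha-1$ then approaches the bad endpoint of the H\"older estimate; it is cleaner to split $T^{\alpha}=T^{\alpha/2}T^{\alpha/2}$, or, as the paper does, to use $T^{\alpha}T^{\beta}=T^{\alpha+\beta}$ to reduce to exponents in $(0,1]$ from the outset.
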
  \begin{proof}   (1) \ This is obvious if $\alpha = \frac{1}{n}$ for $n \in \Ndb$ by  the uniqueness of $n$th roots discussed above. 
In general it can be proved e.g.\ by a change of variable in the Balakrishnan representation for powers
(see e.g.\ \cite{Haase}), or by the continuity in (2).

(2) \ By a triangle inequality argument, and the inequality for 
$\Vert T^\alpha - S^\alpha \Vert$ above, we may assume that $T \in {\mathfrak c}_{B(H)}$.    By (1) we may assume that
$T \in  \frac{1}{2} {\mathfrak F}_{B(H)}$.    Define $$f(z) = ((1-z)/2)^{\alpha} - ((1-z)/2)^{\beta} \; ,  \qquad z \in \Cdb, |z| \leq 1.$$
Via the relation $T^\alpha T^\beta = T^{\alpha + \beta}$ above, we may assume that $\beta \in (0,1]$.
Fix such $\beta$.  By complex numbers one can show that  $|f(z)| \leq g(|\alpha - \beta|)$  on the unit disk,
for a function $g$ with $\lim_{t \to 0^+} \, g(t) = 0$.   By von Neumann's inequality,
  used as in \cite[Proposition 2.3]{BRII}, we have $$\Vert T^\alpha - T^\beta \Vert= \Vert f(1 - 2T) \Vert  \leq g(|\alpha - \beta|).$$ 
Now let $\alpha \to \beta$.  

(3) \ We proved this  in the second paragraph 
of \cite[Section 3]{BRII}  if $\alpha = \frac{1}{n}$ for $n \in \Ndb$.   Hence for $m \in \Ndb$ we have by the paragraph above
the lemma that $T^{\frac{m}{n}} = (T^{\frac{1}{n}})^m \in {\rm oa}(T)$.  The general case for $\alpha > 0$ then follows by the continuity in (2).
\end{proof} 
  
As in  \cite[Theorem 1]{MP} and \cite[Lemma 3.8]{BOZ}, if $\alpha \in (0,1)$ then there
exists a constant $K$ such that if $a, b \in  {\mathfrak r}_{B(H)}$
for a Hilbert space $H$, and $ab = ba$,  then
 $\Vert (a^{\alpha}  - b^\alpha) \zeta \Vert
\leq K \Vert (a-b) \zeta \Vert^\alpha$, for $\zeta \in H$.

\section{Positivity in operator algebras}

Let $A$ be an operator algebra, not necessarily approximately  unital for the present.
Note that ${\mathfrak r}_A = \{ a \in A : a + a^* \geq 0 \}$
is a closed cone in $A$, hence is Archimedean, but it is not 
proper (hence is what is sometimes called a {\em wedge}).  On the other hand
${\mathfrak c}_A = \Rdb_+ {\mathfrak F}_A$ is not closed in general, but it is a a proper cone
(that is, ${\mathfrak c}_A \cap (-{\mathfrak c}_A) = (0)$).
Indeed suppose $a \in {\mathfrak c}_A \cap (-{\mathfrak c}_A)$.  Then  $\Vert 1 - t a \Vert
\leq 1$ and $\Vert 1 + s a \Vert
\leq 1$ for some $s, t > 0$.  By convexity we may assume $s = t$ (by replacing them by
$\min \{ s , t \}$).   It is well known that in any
Banach algebra with an identity of norm $1$, the identity  is an extreme point of the ball.
Applying this in $A^1$ we deduce that  $a = 0$ as desired.

As we said earlier without proof, for
any operator algebra $A$,  $x \in {\mathfrak r}_A$  iff ${\rm Re}(\varphi(x)) \geq 0$ for all
states $\varphi$ of $A^1$.   Indeed, such $\varphi$
extend to states on $C^*(A^1)$.  
 So we may assume that $A$ is a
unital $C^*$-algebra, in which case the result is well known ($x + x^* \geq 0$ iff
$2 {\rm Re}(\varphi(x)) = \varphi(x+x^*) \geq 0$ for all
states $\varphi$).   We remark though that for an operator algebra which is not approximately unital, it is not true that $x \in {\mathfrak r}_A$  iff ${\rm Re}(\varphi(x)) \geq 0$ for all
states $\varphi$ of $A$, with states defined as in the introduction.  An example would be $\Cdb \oplus \Cdb$, with the second summand given the 
zero multiplication. 

The ${\mathfrak r}$-{\em ordering} is simply the order $\preccurlyeq$ induced by the above closed cone; that is
$b \preccurlyeq a$ iff $a - b \in {\mathfrak r}_A$.  
If $A$ is a subalgebra of an operator algebra $B$, it is clear from
a fact mentioned in the introduction (or at the start of \cite[Section 3]{BRII}) that the positivity of $a + a^*$ may be computed
with reference to any containing $C^*$-algebra, that ${\mathfrak r}_A \subset {\mathfrak r}_B$.
If $A, B$ are
approximately unital subalgebras of $B(H)$ then it follows from \cite[Corollary 4.3 (2)]{BRII}
that $A \subset B$ iff ${\mathfrak r}_A \subset {\mathfrak r}_B$.   As in \cite[Section 8]{BRI},
${\mathfrak r}_A$ contains no idempotents which are not orthogonal projections,
and no nonunitary isometries $u$ (since by the analogue 
of \cite[Corollary 2.8]{BRI} we would have $u u^* = s(u u^*) = s(u^* u) = I$).  
In \cite{BRII} it is shown that $\overline{{\mathfrak c}_A} = {\mathfrak r}_A$.

The following  
begins to illustrate the interesting order theory
that exists in an operator algebra $A$ and its generated $C^*$-algebra $B$.   Note particularly how the 
order theoretic results (3)--(7) flow out of the new `cofinality of $A$ in $B$ result'
 (item (2) or (2')).  See \cite{BOZ} (particularly Section 6 there) for
 more interesting connections to, and  remarkable relationships with, the 
classical theory of ordered linear spaces.    In Section 4 we shall see the 
relationship between  (2') and `noncommutative peak interpolation'.

\begin{theorem} \label{havin}  Let  $A$ be an operator algebra which generates a $C^*$-algebra
$B$, and let ${\mathcal U}_A$ denote the open unit ball $\{ a \in A : \Vert a \Vert < 1 \}$.  The following are equivalent:
\begin{itemize} \item [(1)]   $A$ is approximately unital.
 \item [(2)]  For any positive $b \in {\mathcal U}_B$ there exists  $a \in {\mathfrak c}_A$
with $b \preccurlyeq a$.
 \item [(2')]  Same as {\rm (2)}, but also $a \in \frac{1}{2}  {\mathfrak F}_A$ and nearly positive.
\item [(3)]   For any pair 
$x, y \in {\mathcal U}_A$ there exist   nearly positive
$a \in \frac{1}{2}  {\mathfrak F}_A$
with $x \preccurlyeq a$ and $y \preccurlyeq a$.
\item [(4)]    For any $b \in {\mathcal U}_A$  there exist   nearly positive
$a \in \frac{1}{2}  {\mathfrak F}_A$
with $-a \preccurlyeq b \preccurlyeq a$. 
\item [(5)] For any $b \in {\mathcal U}_A$  there exist 
$x, y \in  \frac{1}{2}  {\mathfrak F}_A$ 
with $b = x-y$.   
\item [(6)]  ${\mathfrak r}_A$ is a generating cone (that is, $A = {\mathfrak r}_A - {\mathfrak r}_A$). 
\item [(7)]  $A = {\mathfrak c}_A - {\mathfrak c}_A$. 
\end{itemize}  \end{theorem}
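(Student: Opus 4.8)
The plan is to treat the new cofinality statement (the implication $(1)\Rightarrow(2')$) as the engine and to read off all remaining items from it by soft order-theoretic manipulations, closing the loop through the known characterisation of approximate unitality by generating cones.

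First I would prove $(1)\Rightarrow(2')$. Let $b\in\mathcal{U}_B$ be positive. By Read's theorem $A$ has a cai in $\frac{1}{2}\mathfrak{F}_A$, and by taking roots (using Lemma \ref{roots} together with the facts recalled in the preamble, the $n$th root of an accretive element being sectorial of arbitrarily small angle) such elements may be taken nearly positive. The goal is to manufacture a single $a\in\frac{1}{2}\mathfrak{F}_A$, nearly positive, with ${\rm Re}(a)\geq b$; in fact I would aim for the sharp two-sided form $b\preccurlyeq a\preccurlyeq(\Vert b\Vert+\epsilon)1$ advertised in the introduction, since the upper bound is what later controls $\frac{1}{2}\mathfrak{F}_A$-membership in $(5)$. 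The natural route is to work inside a separable generated $C^*$-subalgebra, build the dominating element from a nearly positive cai weighted so as to behave like the identity on the support projection of $b$, and pass to the limit using the Hölder estimate $\Vert T^\alpha-S^\alpha\Vert\leq K\Vert T-S\Vert^\alpha$ to keep near-positivity under control. Producing one element that is simultaneously accretive of tiny angle, in $\frac{1}{2}\mathfrak{F}_A$, and dominating $b$ is the step I expect to be the main obstacle; everything else is comparatively soft.

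Granting $(2')$, the backbone of cheap implications runs as follows. Clearly $(2')\Rightarrow(2)$. For $(2)\Rightarrow(6)$ I would argue directly and without near-positivity: given $c\in A$ with $\Vert c\Vert<1$, the positive part $({\rm Re}\,c)^+\in B_+$ lies in $\mathcal{U}_B$, so $(2)$ yields $a\in\mathfrak{c}_A\subseteq\mathfrak{r}_A$ with ${\rm Re}(a)\geq({\rm Re}\,c)^+\geq{\rm Re}(c)$; then $a-c\in\mathfrak{r}_A$ and $c=a-(a-c)$ exhibits $A=\mathfrak{r}_A-\mathfrak{r}_A$ after rescaling. The same bookkeeping gives $(4)\Rightarrow(6)$, since $(4)$ supplies $a\in\mathfrak{c}_A$ with $a-b\in\mathfrak{r}_A$. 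The inclusion $\mathfrak{c}_A\subseteq\mathfrak{r}_A$ and the cone/scaling properties give $(5)\Rightarrow(7)\Rightarrow(6)$ at once, and $(3)\Rightarrow(4)$ follows by specialising the pair in $(3)$ to $(b,-b)$. Finally $(6)\Rightarrow(1)$ I would quote from the order theory recalled in the introduction (and developed in \cite{BOZ}): an operator algebra is approximately unital precisely when $\mathfrak{r}_A$ is a generating cone. This already closes the cycle $(1)\Rightarrow(2')\Rightarrow(2)\Rightarrow(6)\Rightarrow(1)$.

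It remains to feed $(3)$, $(4)$, $(5)$, $(7)$ into the equivalence, and here the near-positive, $\frac{1}{2}\mathfrak{F}_A$-refinements reappear. For $(2')\Rightarrow(4)$ I would apply $(2')$ to $b_0=\vert{\rm Re}(b)\vert\in B_+\cap\mathcal{U}_B$, whose defining property $b_0\geq\pm{\rm Re}(b)$ transfers under $b_0\preccurlyeq a$ to $-a\preccurlyeq b\preccurlyeq a$. For $(2')\Rightarrow(3)$ I would first produce a positive $b_0\in\mathcal{U}_B$ dominating both ${\rm Re}(x)$ and ${\rm Re}(y)$ — available because these are self-adjoint of norm $<1$ — and then apply $(2')$ to $b_0$. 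The two delicate points are the existence of such a norm-$<1$ common upper bound in the noncommutative $C^*$-algebra $B$, and the sharp splitting $(1)\Rightarrow(5)$: for the latter I would take a nearly positive $e\in\frac{1}{2}\mathfrak{F}_A$ behaving like the identity on the support of $b$ (exactly the content of the sharp cofinality statement) and set $x=\frac{1}{2}(e+b)$, $y=\frac{1}{2}(e-b)$, the verification $x,y\in\frac{1}{2}\mathfrak{F}_A$ resting on $e$ absorbing $b$ together with near-positivity; then $(7)$ comes for free via $(5)\Rightarrow(7)$. These refinements, routine in spirit, are nonetheless where the estimates must be carried out honestly, and together with the construction in $(1)\Rightarrow(2')$ they constitute the real work of the proof.
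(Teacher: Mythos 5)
Your architecture is right and most of the soft implications are handled correctly (your $(2)\Rightarrow(6)$ via the positive part $({\rm Re}\,c)^+$ and your direct $(2')\Rightarrow(4)$ via $|{\rm Re}(b)|$ are valid and slightly different from the paper's routes, which go through $-x \preccurlyeq -b \preccurlyeq a \preccurlyeq b \preccurlyeq x$ and through $(3)$ respectively). But the engine of the whole theorem, $(1)\Rightarrow(2')$, is exactly the step you leave as ``the main obstacle,'' and no argument is supplied for it. The missing idea is Cohen factorization: taking a cai $(e_t)$ in $\frac{1}{2}\mathfrak{F}_A$ (Read's theorem), the adapted Cohen factorization argument lets one write $b^2 = zwz$ with $0 \leq w \leq 1$ and $z = \sum_k 2^{-k}{\rm Re}(e_{t_k}) = {\rm Re}(a)$ for $a = \sum_k 2^{-k} e_{t_k} \in \frac{1}{2}\mathfrak{F}_A$; then $b^2 \leq z^2$, so $b \leq z = {\rm Re}(a)$, i.e.\ $b \preccurlyeq a$. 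Your sketch (``a nearly positive cai weighted so as to behave like the identity on the support projection of $b$'') gestures at the right object but gives no mechanism for why the weighted sum dominates $b$. Moreover, the near-positivity upgrade is not obtained by the H\"older estimate $\Vert T^\alpha - S^\alpha\Vert \leq K\Vert T - S\Vert^\alpha$ as you suggest; what is needed is that replacing $a$ by $a^{1/n}$ \emph{preserves the domination}, which rests on the monotonicity ${\rm Re}(a^{1/n}) \geq {\rm Re}(a)$ for $a \in \frac{1}{2}\mathfrak{F}_A$ (cited in the paper as \cite[Proposition 4.7]{BBS}).

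There is a second gap in $(1)\Rightarrow(5)$. Your proposed splitting $x = \frac{1}{2}(e+b)$, $y = \frac{1}{2}(e-b)$ requires $\Vert 1 - e \mp b\Vert \leq 1$, equivalently $(e\pm b)^*(e\pm b) \leq (e \pm b) + (e\pm b)^*$, and the cross terms $e^*b + b^*e$ and the term $b^*b$ are not controlled by ``$e$ absorbing $b$'' in any way you have justified; a naive convexity argument would need $\mp b \in \frac{1}{2}\mathfrak{F}_A$, which is false in general. This implication is genuinely nontrivial (the paper outsources it to \cite[Theorem 6.1]{BOZ}), and since $(5)$ is the only route to $(7)$ in your scheme, the gap propagates. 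The remaining items — quoting the known equivalence of approximate unitality with $\mathfrak{r}_A$ generating for $(6)\Rightarrow(1)$, and the directedness of $B_+ \cap \mathcal{U}_B$ for the common upper bound in $(2')\Rightarrow(3)$ — are legitimate citations and not gaps.
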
 

\begin{proof}   (1) $\Rightarrow$ (2') \  Let $(e_t)$ be a cai for $A$ in
$\frac{1}{2} {\mathfrak F}_A$ (by Read's theorem stated in the Introduction).     
By \cite[2.1.6]{BLM}, $(e_t)$ is a cai for  $B$,
and hence so is $(e_t^*)$,
and  $f_t = {\rm Re} \, (e_t)$.   By the proof of Cohen's factorization theorem, 
as adapted in e.g.\ \cite[Lemma 4.8]{BOZ}, 
we may write  $b^2 = z w z$, where $0 \leq w \leq 1$ and  
$$z = \sum_{k=1}^\infty \, 2^{-k} \, f_{t_k} = {\rm Re}(\sum_{k=1}^\infty \, 2^{-k} \,
e_{t_k}),$$
where $\{ f_{t_k} \}$ are some of the $f_t$.   If $a = \sum_{k=1}^\infty \, 2^{-k} \,
e_{t_k} \in \frac{1}{2} {\mathfrak F}_A$, then $z = {\rm Re}(a)$.  
Then $b^2 \leq z^2$, so that $b \leq z$ and $b \preccurlyeq a$.    
We also have $b \preccurlyeq a^{\frac{1}{n}}$ for each $n \in \Ndb$ by \cite[Proposition 
4.7]{BBS},
which gives the `nearly positive' assertion.   

(2')  $\Rightarrow$ (3) \ By $C^*$-algebra theory there exists  positive $b \in {\mathcal U}_B$ with 
$x$ and $y$ `dominated' by $b$.  Then apply (2').

 (3) $\Rightarrow$ (4) \ Apply (3) to $b$ and $-b$.

(4)  $\Rightarrow$ (6) \  $b = \frac{a+b}{2} - \frac{a-b}{2} \in {\mathfrak r}_A - {\mathfrak r}_A$.

(6) $\Rightarrow$ (1) \ This is in \cite[Section 4]{BRII}, but we give a variant of the  argument.
First suppose that $A$ is a weak* closed subalgebra
of $B(H)$.    Each $x \in {\mathfrak r}_A$ has a 
support projection $p_x \in B(H)$ by the discussion in \cite[Section 3]{BBS}, which is 
just the weak* limit of $(x^{\frac{1}{n}})$, and hence is in $A$.  Then $p = \vee_{x \in {\mathfrak r}_A} \, p_x$ is in 
$A$, and for any $x \in {\mathfrak r}_A$ we have
$$p \, x = p \, s(x) \, x = s(x) \, x = x.$$
Since ${\mathfrak r}_A$ is generating, we have $px = x$ for all $x \in A$.   
Similarly, $x p = x$.   So $A$ is unital.    In the general case, we can use the fact from  theory of
ordered spaces \cite{AE2} that if the order in $A$ is generating, then the order in $A^*$ is normal, 
and then the order in $A^{**}$ is generating.  The latter forces $A^{**}$ to be unital, and hence $A$ is
approximately unital by e.g.\ \cite[Proposition 2.5.8]{BLM}.  

(1) $\Rightarrow$ (5) \  Apply \cite[Theorem 6.1]{BOZ}.

It  is obvious that (2') implies (2), and that (5) implies (7), which implies (6).

(2)  $\Rightarrow$ (6) \   If $a \in A$ then by $C^*$-algebra theory and (2) 
there exists $b \in B_+$ and $x \in {\mathfrak r}_A$ with $-x \preccurlyeq -b  \preccurlyeq a \preccurlyeq b \preccurlyeq x$.
Thus $a = \frac{a+x}{2} - \frac{x-a}{2} \in {\mathfrak r}_A - {\mathfrak r}_A$.  
\end{proof}

{\bf Remarks.}   1) \ One cannot expect to be able to choose the $a$ in (2) with
$\Vert a \Vert = \Vert b \Vert$.  Indeed, suppose that  $A = \{ f \in A(\Ddb) : f(1) = 0 \}$ and $B = \{ f \in  C(\Tdb) : f(1) = 0 \}$, with $b = 1$ on 
a nontrivial arc.  If $b \leq {\rm Re} (a) \leq |a| \leq 1$ on that arc, then ${\rm Re} (a) = a = 1$
on that arc too.  But this implies that $a =1$ always, a contradiction.

Similarly, in (3) one cannot replace ${\mathcal U}_A$ by Ball$(A)$, even if $A$ is a $C^*$-algebra (consider 
for example the universal nonunital $C^*$-algebra generated by two projections \cite{SiRa}).
However perhaps this (and also (4)) is possible if $B$ is commutative.   Some remarks on (5) may be found in 
\cite{BOZ} after Theorem 6.1.   
 
\smallskip

2) \ Another proof that (1) implies (2):  if $b \in B_+$ with $\Vert b \Vert < 1$
then it  is immediate from  \cite[Lemma 2.1]{Bnpi} that
 there exists $x \in -{\mathfrak F}_A$ such that $b \leq - x^* x - 2 \, {\rm Re} \, x$.   Hence $b \preccurlyeq a$, where $a = -2 x \in 2 {\mathfrak F}_A$.

This leads to an  quick proof  that (1) implies (2') if $b$ commutes
with Re$(a)$.  Namely, first choose $\epsilon > 0$
such that $(1+\epsilon) \, \Vert b \Vert < 1$.
Let $c = (1+\epsilon) \, b$,
and suppose that $m \in \Ndb$, and choose by the last 
paragraph $a \in 2 {\mathfrak F}_A$ with  
$c^m \leq {\rm Re} \, a$.  
Hence if $n \in \Ndb$ we have
$1 \leq {\rm Re} \, z$ where $z = (c^m + \frac{1}{n})^{-1} (a + \frac{1}{n})$.   It follows
from a result on p.\ 181 of \cite{Haase} that $1  \leq {\rm Re} \, z^{\frac{1}{m}}$.  
Thus $(c^m + \frac{1}{n})^{\frac{1}{m}} \leq {\rm Re} \, ((a + \frac{1}{n})^{\frac{1}{m}})$.
Letting $n \to \infty$ we obtain $c \leq {\rm Re} \, (a^{\frac{1}{m}})$.
For $m \geq m_0$ say, we have $$a^{\frac{1}{m}} = 4^{\frac{1}{m}} (\frac{a}{4})^{\frac{1}{m}}
 \in \frac{1+\epsilon}{2} {\mathfrak F}_A.$$  Dividing by $1+\epsilon$, and
taking $m$ large enough we obtain (2').    

\smallskip

3) \ Of course all parts of the theorem are trivial if $A$ is unital.

\subsection{Non-approximately unital operator algebras}

Most of the results in this Section apply to approximately unital operator algebras.  We offer a couple of results 
that are useful in applying the approximately unital  case to algebras with no approximate identity.  We 
will use the space $A_H = \overline{{\mathfrak r}_{A} A {\mathfrak r}_{A}}$ 
studied in \cite[Section 4]{BRII};
it is actually a  HSA in $A$ (and will be an ideal if $A$ is commutative).

\begin{corollary} \label{Ahasc2}  For any operator algebra $A$,
the largest approximately unital subalgebra of $A$ is
$$A_H = {\mathfrak r}_A - {\mathfrak r}_A = {\mathfrak c}_A - {\mathfrak c}_A.$$
In particular these spaces are  closed, and form a HSA of $A$.

If $A$ is a  weak* closed operator
algebra then $A_H = q A q$ where $q$ is the largest projection in $A$.
This is weak* closed.
 \end{corollary}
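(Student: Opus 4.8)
The plan is to establish the displayed equalities by a pair of two-sided inclusions, from which both the extremality (``largest'') and the closedness follow immediately, and then to treat the weak* closed case separately. First I would prove ${\mathfrak r}_A \subseteq A_H$. For $x \in {\mathfrak r}_A$ the roots $x^{\frac{1}{n}}$ lie in ${\rm oa}(x) \subseteq A$ by Lemma \ref{roots}(3) and are again accretive, so $x^{\frac{1}{n}}\, x\, x^{\frac{1}{n}} = x^{1 + \frac{2}{n}} \in {\mathfrak r}_A A {\mathfrak r}_A$. Letting $n \to \infty$ and using the continuity of $\alpha \mapsto x^\alpha$ from Lemma \ref{roots}(2) gives $x^{1 + \frac{2}{n}} \to x$ in norm, whence $x \in \overline{{\mathfrak r}_A A {\mathfrak r}_A} = A_H$. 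As $A_H$ is a linear subspace this yields ${\mathfrak r}_A - {\mathfrak r}_A \subseteq A_H$, and hence also ${\mathfrak c}_A - {\mathfrak c}_A \subseteq A_H$ since ${\mathfrak c}_A \subseteq {\mathfrak r}_A$.

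For the reverse inclusions I would use that $A_H$, being a HSA, is approximately unital, and apply Theorem \ref{havin} (the equivalence of (1), (6), (7)) to the algebra $A_H$ itself, obtaining $A_H = {\mathfrak r}_{A_H} - {\mathfrak r}_{A_H} = {\mathfrak c}_{A_H} - {\mathfrak c}_{A_H}$. Since accretivity and the condition $x^* x \leq C(x + x^*)$ characterizing ${\mathfrak c}_A$ are intrinsic, they pass from the subalgebra $A_H$ up to $A$, so ${\mathfrak r}_{A_H} \subseteq {\mathfrak r}_A$ and ${\mathfrak c}_{A_H} \subseteq {\mathfrak c}_A$; thus $A_H \subseteq {\mathfrak c}_A - {\mathfrak c}_A$ and $A_H \subseteq {\mathfrak r}_A - {\mathfrak r}_A$. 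Combining the two directions collapses everything to $A_H = {\mathfrak r}_A - {\mathfrak r}_A = {\mathfrak c}_A - {\mathfrak c}_A$; in particular these sets are closed, being equal to the closed HSA $A_H$. Extremality is then read off from Theorem \ref{havin} as well: if $D \subseteq A$ is any approximately unital subalgebra then $D = {\mathfrak r}_D - {\mathfrak r}_D \subseteq {\mathfrak r}_A - {\mathfrak r}_A = A_H$, and since $A_H$ is itself approximately unital it is the largest such subalgebra.

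For the weak* closed case I would reproduce the support-projection argument from the proof of (6) $\Rightarrow$ (1) in Theorem \ref{havin}. Each $x \in {\mathfrak r}_A$ has a support projection $s(x)$, the weak* limit of $(x^{\frac{1}{n}})$, which lies in $A$ because $A$ is weak* closed; put $q = \bigvee_{x \in {\mathfrak r}_A} s(x) \in A$. Any projection $e \in A$ satisfies $e \in {\mathfrak r}_A$ with $s(e) = e \leq q$, so $q$ is the largest projection in $A$, and one checks as in that proof that $q x = x = x q$ for all $x \in {\mathfrak r}_A$, hence $q$ is a two-sided identity on $A_H = {\mathfrak r}_A - {\mathfrak r}_A$; thus $A_H \subseteq qAq$. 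Conversely $qAq$ is a subalgebra with identity $q$, so it is approximately unital and therefore contained in $A_H$ by the extremality just proved, giving $A_H = qAq$. Finally $qAq = \{ a \in A : qa = a = aq \} = A \cap \{ T \in B(H) : qT = T \} \cap \{ T \in B(H) : Tq = T \}$ is an intersection of weak* closed sets, hence weak* closed.

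I expect the only genuinely delicate point to be the weak* case, specifically confirming that the supremum $q = \bigvee_x s(x)$ again lies in $A$ (the step borrowed from the proof of Theorem \ref{havin}) and that $qAq$ is weak* closed. By contrast, the norm-closed equalities are essentially bookkeeping once Theorem \ref{havin} is available both for subalgebras of $A$ and for $A_H$ itself.
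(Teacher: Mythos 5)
Your proof is correct and follows essentially the same route as the paper: apply Theorem \ref{havin} (6),(7) to the approximately unital HSA $A_H$, and handle the weak* case via the support-projection $p=\vee_x s(x)$ argument borrowed from the proof of (6)~$\Rightarrow$~(1). The one genuine (and welcome) difference is that where the paper simply cites \cite[Corollary 4.3]{BRII} for ${\mathfrak r}_A={\mathfrak r}_{A_H}$, you prove the needed inclusion ${\mathfrak r}_A\subseteq A_H$ directly via $x^{\frac{1}{n}}\,x\,x^{\frac{1}{n}}=x^{1+\frac{2}{n}}\to x$, which makes the argument self-contained modulo Lemma \ref{roots} and the fact that $A_H$ is a HSA.
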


\begin{proof} 
In the language of \cite[Section 4]{BRII}, and using
\cite[Corollary 4.3]{BRII}, ${\mathfrak r}_A = {\mathfrak r}_{A_H}$, 
and the largest approximately unital subalgebra of $A$ is the HSA 
$$A_H  = {\mathfrak r}_{A_H} - {\mathfrak r}_{A_H}
= {\mathfrak r}_A - {\mathfrak r}_A,$$ using Theorem \ref{havin} (6).  
A similar argument works in the ${\mathfrak c}_A$ case,
with ${\mathfrak r}_{A_H}$ replaced by
${\mathfrak c}_{A_H}$ using Theorem \ref{havin} (7) and facts from
\cite[Section 4]{BRII} about ${\mathfrak F}_{A_H}$.

To see the final assertion, note that if $p$ is
as in the proof of (6) $\Rightarrow$ (1) in Theorem \ref{havin}, then certainly $q \leq p$
since $q = s(q) \in {\mathfrak r}_A$.
However $p \leq q$ since $p$ is a projection in $A$.  So $p = q$,
and this acts as the identity on ${\mathfrak r}_A - {\mathfrak r}_A = A_H$.   So $A_H \subset qA q$, and conversely
$q A q \subset   A_H$ since $A_H$ is a HSA,
or because $A_H$ is the largest (approximately) unital subalgebra
of $A$.  \end{proof}

\begin{lemma} \label{mnah}  Let $A$ be any operator algebra.  Then   for every $n \in \Ndb$, 
$$M_n(A_H) = M_n(A)_H \; , \; \; \; \; \;  {\mathfrak r}_{M_n(A)} = {\mathfrak r}_{M_n(A_H)}
 \; , \; \; \; \; \;  {\mathfrak F}_{M_n(A)} = {\mathfrak F}_{M_n(A_H)}$$
(these are the matrix spaces).  
\end{lemma}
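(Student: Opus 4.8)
The plan is to reduce all three equalities to the middle one, ${\mathfrak r}_{M_n(A)} = {\mathfrak r}_{M_n(A_H)}$, and then prove that one directly. First note that $M_n(A_H)$ is approximately unital (if $(e_t)$ is a cai for $A_H$, then $\mathrm{diag}(e_t,\ldots,e_t)$ is one for $M_n(A_H)$), so $M_n(A_H) \subseteq M_n(A)_H$ since the latter is the largest approximately unital subalgebra of $M_n(A)$ by Corollary \ref{Ahasc2}. Granting the middle equality, Corollary \ref{Ahasc2} applied to $M_n(A)$ would give $M_n(A)_H = {\mathfrak r}_{M_n(A)} - {\mathfrak r}_{M_n(A)} = {\mathfrak r}_{M_n(A_H)} - {\mathfrak r}_{M_n(A_H)} = M_n(A_H)$, the last step being Corollary \ref{Ahasc2} for the approximately unital algebra $M_n(A_H)$; this is the first equality. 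For the third, since $\Vert 1-x \Vert \le 1$ forces $x+x^* \ge x^*x \ge 0$, we have ${\mathfrak F}_{M_n(A)} \subseteq {\mathfrak r}_{M_n(A)} = {\mathfrak r}_{M_n(A_H)} \subseteq M_n(A_H)$, and since the defining inequality of ${\mathfrak F}$ is computed in a common $C^*$-algebra this gives ${\mathfrak F}_{M_n(A)} \subseteq {\mathfrak F}_{M_n(A_H)}$; the reverse inclusion is immediate.

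So the whole content is ${\mathfrak r}_{M_n(A)} = {\mathfrak r}_{M_n(A_H)}$. The inclusion $\supseteq$ is clear, as $M_n(A_H)$ is a subalgebra of $M_n(A)$ and accretivity may be computed in a generated $C^*$-algebra. For $\subseteq$, I would let $B = C^*(A)$, work inside the von Neumann algebra $M_n(B^{**})$ (which contains $M_n(A)^{**} = M_n(A^{**})$), let $p \in A^{**}$ be the open projection of the HSA $A_H$, and set $P = p \otimes I_n$ and $P^\perp = 1 - P$ with $1$ the unit of $M_n(B^{**})$. Recalling from HSA theory \cite{BHN} that $A_H = \{a \in A : pa = ap = a\}$, it then suffices to show that every $X = [a_{ij}] \in {\mathfrak r}_{M_n(A)}$ satisfies $P^\perp X = X P^\perp = 0$, for this forces $p^\perp a_{ij} = a_{ij} p^\perp = 0$ and hence each $a_{ij} \in A_H$.

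The key step, and the one resolving the main obstacle, is the following. First I would record that for \emph{any} $Z \in {\mathfrak r}_{M_n(A)}$ one has $(Z + Z^*)P^\perp = 0$: the diagonal entries satisfy $z_{ii} + z_{ii}^* \ge 0$, so $z_{ii} \in {\mathfrak r}_A \subseteq A_H$ by Corollary \ref{Ahasc2}, whence $p^\perp z_{ii} = z_{ii} p^\perp = 0$; thus the positive element $P^\perp(Z+Z^*)P^\perp$ has zero diagonal and therefore vanishes, giving $(Z+Z^*)^{1/2} P^\perp = 0$ and hence $(Z+Z^*)P^\perp = 0$. The obstacle is that this symmetric identity only controls $a_{ij} + a_{ji}^*$, not $a_{ij}$ individually. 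To break the symmetry I would pass to the square root $W = X^{1/2}$, which lies in $\mathrm{oa}(X) \subseteq M_n(A)$ by Lemma \ref{roots} and is again accretive. Applying the recorded fact to $W$ gives $W^* P^\perp = -W P^\perp$, and applying it to $X = W^2$ gives $(W^2 + (W^*)^2)P^\perp = 0$; substituting the first relation into the second collapses this to $(WW^* + W^*W)P^\perp = 0$. Left multiplying by $P^\perp$ and noting that $P^\perp W^* W P^\perp = (W P^\perp)^*(W P^\perp)$ and $P^\perp W W^* P^\perp = (W^* P^\perp)^*(W^* P^\perp)$ are positive with zero sum, each must vanish; hence $W P^\perp = 0 = W^* P^\perp$, and therefore $X P^\perp = W(W P^\perp) = 0$ and $P^\perp X = (P^\perp W)W = 0$.

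This establishes ${\mathfrak r}_{M_n(A)} \subseteq M_n(A_H)$, and since the entries then lie in $A_H$ the accretivity of $X$ is inherited by $M_n(A_H)$, yielding ${\mathfrak r}_{M_n(A)} = {\mathfrak r}_{M_n(A_H)}$ and, via the first paragraph, the remaining two equalities. I expect the square-root manipulation to be the crux, since the real-part condition alone conflates $a_{ij}$ with $a_{ji}^*$; the only ingredients needed beyond the excerpt are the standard HSA description $A_H = \{a \in A : pa = ap = a\}$ and the identification $M_n(A)^{**} = M_n(A^{**})$, both routine.
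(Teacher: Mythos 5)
Your proposal is correct, and the overall architecture (reduce everything to showing ${\mathfrak r}_{M_n(A)} \subseteq M_n(A_H)$, then recover the other two equalities from Corollary \ref{Ahasc2}) matches the paper's. But the key inclusion is proved by a genuinely different argument. The paper's proof is purely algebraic and stays at the level of $A$: from $Z + Z^* \geq 0$ one gets $\sum_{i,j} \bar{z_i}(a_{ij}+a_{ji}^*)z_j \geq 0$ for all scalars $z_1,\dots,z_n$, hence $\sum_{i,j}\bar{z_i}a_{ij}z_j \in {\mathfrak r}_A$, and then a polarization-type choice of the scalars ($z_2 = 1$ versus $z_2 = i$, the other coordinates zero) puts both $a_{12}+a_{21}$ and $i(a_{12}-a_{21})$ into ${\mathfrak r}_A - {\mathfrak r}_A = A_H$, whence $a_{12}, a_{21} \in A_H$; no roots, no support projections, no second duals. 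Your route instead passes to $M_n(B^{**})$, uses the support projection $p$ of the HSA $A_H$ and the fact that a positive operator matrix with vanishing diagonal is zero, and then breaks the symmetry between $a_{ij}$ and $a_{ji}^*$ via the square root $W = X^{1/2}$ (legitimately in $M_n(A)$ by Lemma \ref{roots}(3)), deducing $WP^\perp = W^*P^\perp = 0$ from the positivity of the two summands of $P^\perp(WW^* + W^*W)P^\perp$. Both arguments are sound; the paper's is shorter and more elementary, while yours yields slightly more structural information (the square root of $X$ already lives in the corner $PA^{**}P$, not just $X$ itself) at the cost of invoking the root machinery and the HSA--open-projection correspondence from \cite{BHN}. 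Your handling of the peripheral equalities is also fine, though note the paper simply quotes \cite[Corollary 4.3(1)]{BRII} for ${\mathfrak r}_{M_n(A)} = {\mathfrak r}_{M_n(A)_H}$ and ${\mathfrak F}_{M_n(A)} = {\mathfrak F}_{M_n(A)_H}$ once $M_n(A)_H = M_n(A_H)$ is known, rather than rederiving them.
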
  

\begin{proof}   Clearly $M_n(A_H)$ is an approximately unital subalgebra of $M_n(A)$.  So $M_n(A_H)$ is contained in $M_n(A)_H$,
 since the latter is the largest approximately unital subalgebra of $M_n(A)$.    
To show that $M_n(A)_H \subset M_n(A_H)$ it suffices by 
Corollary \ref{Ahasc2} to show that ${\mathfrak r}_{M_n(A)} \subset M_n(A_H)$.  So  suppose that $a = [a_{ij}] \in M_n(A)$
with $a + a^* \geq 0$.  Then $a_{ii} + a_{ii}^* \geq 0$ for each $i$.  We also have 
$\sum_{i,j} \, \bar{z_i} \, (a_{ij} + a_{ji}^*) \, z_j \geq 0$ for all scalars 
$z_1, \cdots , z_n$.   So $\sum_{i,j} \, \bar{z_i} \, a_{ij} z_j \in {\mathfrak r}_A$.
 Fix an $i, j$, which we will assume to be $1, 2$ for simplicity.
Set all $z_k = 0$ if $k \notin \{ i, j \} = \{ 1, 2 \}$, to deduce 
$$\bar{z_1} z_2 a_{12}  + \bar{z_2} z_1 a_{21} = \sum_{i,j = 1}^2 \, \bar{z_i} \, a_{ij} z_j \, - \, (|z_1|^2 a_{11} +
|z_2|^2 a_{22}) \in {\mathfrak r}_A - {\mathfrak r}_A = A_H.$$
Choose $z_1 = 1$; if  
$z_2 = 1$ then $a_{12}  + a_{21} \in A_H$, while if $z_2 = i$ then 
$i(a_{12}  -  a_{21}) \in A_H$.    So $a_{12}, a_{21} \in A_H$.  A similar argument 
shows  that $a_{ij}  \in A_H$ for all $i, j$.  Thus $M_n(A_H) = M_n(A)_H$, from which we deduce
by \cite[Corollary 4.3 (1)]{BRII} that
$${\mathfrak r}_{M_n(A)} = {\mathfrak r}_{M_n(A)_H} = {\mathfrak r}_{M_n(A_H)}.$$
Similarly ${\mathfrak F}_{M_n(A)} = {\mathfrak F}_{M_n(A)_H}
= {\mathfrak F}_{M_n(A_H)}$.
\end{proof}

The last result is used in \cite{BBS}.

 If $S \subset {\mathfrak r}_A$, for an operator algebra
$A$,  and if $xy = yx$
for all $x, y \in S$, write oa$(S)$ for the smallest closed subalgebra of $A$ containing $S$.

\begin{proposition} \label{commt}  If $S$ is any subset of ${\mathfrak r}_A$ for an operator algebra
$A$,  
then ${\rm oa}(S)$ has a cai.
\end{proposition}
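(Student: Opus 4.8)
The plan is to deduce this immediately from Corollary \ref{Ahasc2}, which, for any operator algebra, identifies the largest approximately unital subalgebra with $A_H = {\mathfrak r}_A - {\mathfrak r}_A$ and asserts that this space is closed (indeed a HSA). So, writing $D = {\rm oa}(S)$, I would apply that corollary with $D$ in place of $A$: its largest approximately unital subalgebra is $D_H = {\mathfrak r}_D - {\mathfrak r}_D$, a closed subalgebra of $D$. The whole proof then reduces to showing that $S$ already lives inside $D_H$, for then $D_H$ is a closed subalgebra containing $S$.

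The one point to verify is that $S \subseteq {\mathfrak r}_D$. This follows because membership in ${\mathfrak r}$ is intrinsic: for $s \in S$ we have $s \in D$, and since $s \in {\mathfrak r}_A$ the element $s + s^*$ is positive, a fact independent of the containing algebra (as recalled at the start of Section 2 and in the introduction). Hence $s \in {\mathfrak r}_D$, giving $S \subseteq {\mathfrak r}_D \subseteq {\mathfrak r}_D - {\mathfrak r}_D = D_H$.

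To finish, I would note that $D_H$ is a closed subalgebra of $D$ containing $S$, whereas $D = {\rm oa}(S)$ is by definition the smallest closed subalgebra containing $S$. Therefore $D \subseteq D_H \subseteq D$, forcing $D = D_H$, and since $D_H$ is approximately unital we conclude that ${\rm oa}(S)$ has a cai.

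I do not anticipate a genuine obstacle: essentially all the content has been packaged into Corollary \ref{Ahasc2}, and this argument does not even invoke the commutativity of $S$. The only place requiring a moment's care is the verification $S \subseteq {\mathfrak r}_D$, i.e.\ that passing from $A$ to the subalgebra $D$ does not alter which elements are accretive; but this is exactly the representation-independence of $a + a^*$ that has already been established.
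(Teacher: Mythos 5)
Your argument is correct and is essentially the paper's own proof: both reduce the statement to Corollary \ref{Ahasc2} applied to $D={\rm oa}(S)$, using that ${\mathfrak r}_D = D\cap{\mathfrak r}_A$ contains $S$ and that $D_H$ is a closed subalgebra, to conclude $D=D_H$ is approximately unital. Your use of the description $D_H={\mathfrak r}_D-{\mathfrak r}_D$ together with the minimality of ${\rm oa}(S)$ is only a cosmetic variant of the paper's sandwich $C\subset\overline{{\mathfrak r}_C\,C\,{\mathfrak r}_C}=C_H\subset C$.
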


\begin{proof}   Let $C = {\rm oa}(S)$.  Then  ${\mathfrak r}_C = C \cap {\mathfrak r}_A$, so that 
$$C \subset \overline{{\mathfrak r}_C \, C \, {\mathfrak r}_C} = C_H \subset C.$$   Hence $C = C_H$ which is
approximately unital.   
\end{proof}

\subsection{The ${\mathfrak F}$-transform and existence of an increasing approximate identity}

In \cite{BRII} the sets  $\frac{1}{2}{\mathfrak F}_{A}$ and ${\mathfrak r}_{A}$ were related by a certain 
transform.  We now establish a few more basic properties of this transform. 
The Cayley transform $\kappa(x) = (x-I)(x+I)^{-1}$ 
of an accretive $x \in A$ exists since $-1 \notin {\rm Sp}(x)$, and
is well known to be a contraction.  Indeed it is well known (see e.g.\ \cite{NF}) 
that if $A$ is unital then the Cayley transform
maps ${\mathfrak r}_A$ bijectively onto the set of contractions in $A$ whose 
spectrum does not contain $1$, and the inverse transform is
$T \mapsto (I+T) (I - T)^{-1}$.  The Cayley transform maps the accretive elements $x$
with ${\rm Re}(x) \geq \epsilon 1$ for some $\epsilon > 0$, onto the set 
of elements $T \in A$ with $\Vert T \Vert < 1$ (see e.g.\ 2.1.14 in \cite{BLM}). 
 The ${\mathfrak F}$-transform ${\mathfrak F}(x) = 1 - (x+1)^{-1} = x (x+1)^{-1}$ may be written
as ${\mathfrak F}(x) = \frac{1}{2} (1 + \kappa(x))$.  Equivalently,
 $\kappa(x) = -(1 - 2 {\mathfrak F}(x))$.   

\begin{lemma} \label{Font}   For any operator algebra $A$, the  ${\mathfrak F}$-transform 
 maps ${\mathfrak r}_{A}$ bijectively onto the set of elements of $\frac{1}{2}{\mathfrak F}_{A}$
of norm $< 1$.   Thus ${\mathfrak F}({\mathfrak r}_A) = {\mathcal U}_A \cap \frac{1}{2}{\mathfrak F}_{A}$.  \end{lemma}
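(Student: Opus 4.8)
The plan is to produce an explicit two-sided inverse for ${\mathfrak F}$, namely the map $T \mapsto T(1-T)^{-1}$, and to check that ${\mathfrak F}$ carries ${\mathfrak r}_A$ into $\frac{1}{2}{\mathfrak F}_A \cap {\mathcal U}_A$ while this inverse carries $\frac{1}{2}{\mathfrak F}_A \cap {\mathcal U}_A$ back into ${\mathfrak r}_A$. All computations take place inside a $C^*$-algebra $B$ containing $A^1$, where the norms and positivity of the relevant elements are computed; throughout I use that $A$ is a two-sided ideal in $A^1$, so that both transforms preserve membership in $A$.

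First I would dispose of the easy containment. For $x \in {\mathfrak r}_A$ the resolvent $(x+1)^{-1}$ lies in $A^1$ since $-1 \notin {\rm Sp}(x)$, whence ${\mathfrak F}(x) = x(x+1)^{-1} \in A$. From the relation ${\mathfrak F}(x) = \frac{1}{2}(1 + \kappa(x))$ recorded above we get $\|1 - 2{\mathfrak F}(x)\| = \|\kappa(x)\| \le 1$, since the Cayley transform of an accretive element is a contraction; thus ${\mathfrak F}(x) \in \frac{1}{2}{\mathfrak F}_A$.

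The crux is the strict inequality $\|{\mathfrak F}(x)\| < 1$. Here I would establish, by writing $1 = (x^*+1)^{-1}(x^*+1)(x+1)(x+1)^{-1}$ and using $(x^*+1)(x+1) - x^*x = x + x^* + 1$, the identity
$$1 - {\mathfrak F}(x)^* {\mathfrak F}(x) = (x^*+1)^{-1}(x + x^* + 1)(x+1)^{-1}.$$
Because $x \in {\mathfrak r}_A$ forces $x + x^* + 1 \ge 1$, conjugating this inequality by $(x+1)^{-1}$ gives
$$1 - {\mathfrak F}(x)^* {\mathfrak F}(x) \ge (x^*+1)^{-1}(x+1)^{-1} = \big( (x+1)(x^*+1) \big)^{-1},$$
which is a positive invertible element, hence bounded below by $\delta 1$ with $\delta = \|(x+1)(x^*+1)\|^{-1} > 0$. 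Therefore $\|{\mathfrak F}(x)\|^2 = \|{\mathfrak F}(x)^* {\mathfrak F}(x)\| \le 1 - \delta < 1$, so ${\mathfrak F}(x) \in {\mathcal U}_A$. I expect this factorization to be the only non-routine idea in the proof.

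Finally I would handle the inverse and mutual invertibility. Given $T \in \frac{1}{2}{\mathfrak F}_A \cap {\mathcal U}_A$, the condition $\|T\| < 1$ makes $1 - T$ invertible in $A^1$, so $x := T(1-T)^{-1} \in A$. Setting $\kappa = 2T - 1$ (so $\|\kappa\| = \|1 - 2T\| \le 1$) and using $x = (1+\kappa)(1-\kappa)^{-1}$, the parallel expansion
$$x + x^* = 2(1 - \kappa^*)^{-1}(1 - \kappa^* \kappa)(1 - \kappa)^{-1}$$
is positive since $1 - \kappa^*\kappa \ge 0$, so $x \in {\mathfrak r}_A$. The two maps are mutually inverse: from $x + 1 = [T + (1-T)](1-T)^{-1} = (1-T)^{-1}$ one reads off ${\mathfrak F}(x) = x(x+1)^{-1} = T$; conversely, for $x \in {\mathfrak r}_A$ the identity $1 - {\mathfrak F}(x) = (x+1)^{-1}$ gives ${\mathfrak F}(x)\big(1 - {\mathfrak F}(x)\big)^{-1} = x(x+1)^{-1}(x+1) = x$. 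This yields the asserted bijection and the final equality ${\mathfrak F}({\mathfrak r}_A) = {\mathcal U}_A \cap \frac{1}{2}{\mathfrak F}_A$.
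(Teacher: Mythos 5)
Your proof is correct, and its overall skeleton (show ${\mathfrak F}(x)=\frac12(1+\kappa(x))\in\frac12{\mathfrak F}_A$, exhibit $T\mapsto T(1-T)^{-1}$ as a two-sided inverse, and verify $2\,{\rm Re}\,(T(1-T)^{-1})=(1-T^*)^{-1}(T+T^*-2T^*T)(1-T)^{-1}\geq 0$) coincides with the paper's. Where you genuinely diverge is the strictness of the norm bound. The paper first identifies ${\mathfrak F}({\mathfrak r}_{A^1})$ with the elements of $\frac12{\mathfrak F}_{A^1}$ whose spectrum omits $1$, and then has to prove separately that, within $\frac12{\mathfrak F}_{A^1}$, ``$1\notin{\rm Sp}(T)$'' is the same as ``$\Vert T\Vert<1$''; the nontrivial direction of that equivalence is outsourced to \cite[Proposition 3.7]{ABS} (if $\Vert\frac12(1+\kappa)\Vert=1$ for a contraction $\kappa$ then $1-\kappa$ is not invertible). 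You instead get $\Vert{\mathfrak F}(x)\Vert<1$ directly and quantitatively from the factorization $1-{\mathfrak F}(x)^*{\mathfrak F}(x)=(x^*+1)^{-1}(x+x^*+1)(x+1)^{-1}\geq\bigl((x+1)(x^*+1)\bigr)^{-1}\geq\delta 1$, which also makes surjectivity immediate since invertibility of $1-T$ for $\Vert T\Vert<1$ is automatic. Your route is more self-contained (no external citation for the spectral fact) and yields an explicit modulus of strictness $\delta=\Vert(x+1)(x^*+1)\Vert^{-1}$; the paper's route yields the slightly finer intermediate description of the image as the elements of $\frac12{\mathfrak F}_A$ with $1\notin{\rm Sp}$, which is occasionally useful in its own right. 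Both arguments are complete; the only point worth making explicit in yours is that $(x+1)^{-1}$ exists because $-1\notin{\rm Sp}(x)$ for accretive $x$, a fact the paper records just before the lemma.
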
  \begin{proof}  
First assume that  $A$ is  unital. By the last equations  ${\mathfrak F}({\mathfrak r}_A)$ is 
contained in  the set of elements of $\frac{1}{2}{\mathfrak F}_{A}$
whose spectrum does not contain $1$.  
The inverse of the ${\mathfrak F}$-transform
on this domain is 
$T (I-T)^{-1}$.   To see for example that $T (I-T)^{-1} \in {\mathfrak r}_A$ if $T \in \frac{1}{2}{\mathfrak F}_{A}$
note that 2Re$(T (I-T)^{-1})$ equals
$$(I-T^*)^{-1}(T^*(I-T) + (I-T^*) T) (I-T)^{-1} = (I-T^*)^{-1}(T + T^* - 2T^* T) (I-T)^{-1}$$
which is positive since $T^* T$  is dominated by Re$(T)$ if $T \in \frac{1}{2}{\mathfrak F}_{A}$.  
 Hence for any (possibly nonunital) operator algebra $A$ the  ${\mathfrak F}$-transform 
 maps ${\mathfrak r}_{A^1}$ bijectively onto the set of elements of $\frac{1}{2}{\mathfrak F}_{A^1}$
whose spectrum does not contain $1$.   However this equals the set of elements of $\frac{1}{2}{\mathfrak F}_{A^1}$
of norm $< 1$.     Indeed if $\Vert  {\mathfrak F}(x) \Vert = 1$ then $\Vert  \frac{1}{2} (1 + \kappa(x)) \Vert = 1$, and so $1 - \kappa(x)$ 
is not invertible by \cite[Proposition 3.7]{ABS}.   Hence $1 \in  {\rm Sp}_{A^1}(\kappa(x))$ and $1 \in  {\rm Sp}_A({\mathfrak F}(x))$.
Since ${\mathfrak F}(x) \in A$ iff $x \in A$, we are done.   \end{proof}

Thus in some sense we can identify ${\mathfrak r}_{A}$  with the strict contractions in $\frac{1}{2}{\mathfrak F}_{A}$.  This for example induces an order on this set of strict contractions.   

We recall that the positive part of the  open unit ball of a $C^*$-algebra
is a directed set, and indeed is a  net which is a positive cai for $B$ (see e.g.\ \cite{Ped}).  The following generalizes this to operator algebras:

\begin{proposition} \label{isnet}  If $A$ is an approximately unital  operator algebra, then
${\mathcal U}_A \cap \frac{1}{2} {\mathfrak F}_A$ is a directed set in the $\preccurlyeq$ ordering,
and with this ordering ${\mathcal U}_A \cap \frac{1}{2} {\mathfrak F}_A$  is an increasing 
cai for $A$.  \end{proposition}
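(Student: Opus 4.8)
The plan is to establish three things separately: that ${\mathcal D} := {\mathcal U}_A \cap \frac{1}{2}{\mathfrak F}_A$ is directed upward by $\preccurlyeq$; that, indexed by $({\mathcal D},\preccurlyeq)$ via $e \mapsto e$, it is increasing (immediate once directedness is known, since the net value at each index equals the index); and that this net is a cai. Throughout I would compute real parts and $C^*$-inequalities inside a generated $C^*$-algebra $B$, using that ${\mathfrak r}_A$ is computed with reference to $B$.

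For directedness, rather than transport the order across the ${\mathfrak F}$-transform (whose order behaviour I do not want to analyze), I would invoke Theorem \ref{havin}(3) together with a scaling trick. Given $x,y \in {\mathcal D}$, choose $r$ with $\max(\Vert x\Vert,\Vert y\Vert) < r < 1$, so $x/r,\, y/r \in {\mathcal U}_A$. By Theorem \ref{havin}(3) there is a nearly positive $a \in \frac{1}{2}{\mathfrak F}_A$ with $x/r \preccurlyeq a$ and $y/r \preccurlyeq a$. Set $z = ra$. Since ${\mathfrak r}_A$ is a cone, $z - x = r(a - x/r) \in {\mathfrak r}_A$ and likewise for $y$, so $x \preccurlyeq z$ and $y \preccurlyeq z$; moreover $\Vert 1 - 2ra\Vert \leq (1-r) + r\Vert 1-2a\Vert \leq 1$ and $\Vert ra\Vert \leq r < 1$ (as $\Vert a\Vert \leq 1$), so $z \in {\mathcal D}$. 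Thus ${\mathcal D}$ is directed, and upper bounds may even be taken nearly positive.

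The heart of the cai argument is a monotonicity estimate. For $f \in \frac{1}{2}{\mathfrak F}_A$ the inequality $\Vert 1-2f\Vert \leq 1$ yields (in $B$) both $f^*f \leq {\rm Re}(f)$ and $ff^* \leq {\rm Re}(f)$, and $0 \leq {\rm Re}(f) \leq 1$. Hence $(1-f)^*(1-f) = 1 - 2{\rm Re}(f) + f^*f \leq 1 - {\rm Re}(f)$, so for $a \in A$,
\[ \Vert a - fa\Vert^2 = \Vert a^*(1-f)^*(1-f)a\Vert \leq \Vert a^*(1-{\rm Re}(f))a\Vert = \Vert (1-{\rm Re}(f))^{\frac{1}{2}} a\Vert^2, \]
and symmetrically $\Vert a - af\Vert^2 \leq \Vert a(1-{\rm Re}(f))a^*\Vert$. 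The key consequence is that these bounds are monotone in the $\preccurlyeq$ order: if $e \preccurlyeq f$ then ${\rm Re}(e) \leq {\rm Re}(f)$, whence $1 - {\rm Re}(f) \leq 1 - {\rm Re}(e)$ and therefore $\Vert a - fa\Vert \leq \Vert (1-{\rm Re}(e))^{\frac{1}{2}} a\Vert$ for every $f \succcurlyeq e$ in ${\mathcal D}$ (and analogously on the right). I would then only need, for each $a$ and $\epsilon > 0$, a single seed $e \in {\mathcal D}$ making this bound $< \epsilon$. Taking a cai $(f_s) \subset \frac{1}{2}{\mathfrak F}_A$ from Read's theorem and scaling, $rf_s \in {\mathcal D}$ for $0 < r < 1$; since $(f_s)$ and hence $(f_s^*)$ is a cai for $B$, ${\rm Re}(f_s)$ acts as an approximate identity, so $\Vert a^*(1-{\rm Re}(f_s))a\Vert \to 0$, and absorbing the scalar error $(1-r)\Vert a\Vert^2$ produces such an $e = rf_s$. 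Combined with directedness, this shows the net is a two-sided approximate identity; being contractive (norms $< 1$), it is a cai, and it is increasing by construction.

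The main obstacle I anticipate is the cai step rather than directedness: because the net is indexed by ${\mathcal D}$ itself, its approximate-identity property must be shown to be compatible with $\preccurlyeq$, and this hinges on recognizing that $\preccurlyeq$ controls $\Vert a - fa\Vert$ through the $C^*$-inequality $(1-f)^*(1-f) \leq 1 - {\rm Re}(f)$. The second delicate point is seeding the net with a genuine cai lying inside the open ball, which forces the use of Read's theorem scaled into ${\mathcal U}_A$ together with the observation that the real parts of a cai still act approximately as a unit (requiring $(f_s^*)$ to be a cai for $B$).
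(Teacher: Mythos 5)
Your argument is correct, and its skeleton is the same as the paper's: directedness is extracted from Theorem \ref{havin}(3), and the approximate identity property rests on the inequality $(1-f)^*(1-f)\le 1-{\rm Re}(f)$ for $f\in\frac{1}{2}{\mathfrak F}_A$ combined with the monotonicity of real parts under $\preccurlyeq$. You differ in two details, and both are improvements on the written version. First, Theorem \ref{havin}(3) only supplies an upper bound in $\frac{1}{2}{\mathfrak F}_A$, with no guarantee that it has norm $<1$; the paper simply cites (3) for directedness of ${\mathcal U}_A\cap\frac{1}{2}{\mathfrak F}_A$, whereas your rescaling ($x,y\mapsto x/r,y/r$, then $z=ra$, with the convexity estimate $\Vert 1-2ra\Vert\le(1-r)+r\Vert 1-2a\Vert\le 1$) is exactly the step needed to bring the upper bound back into the open ball. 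Second, the paper seeds the tail with $x^{\frac{1}{n}}$ for $x\in\frac{1}{2}{\mathfrak F}_A$, using that $({\rm Re}(x^{\frac{1}{n}}))_n$ is a cai for $C^*({\rm oa}(x))$, and so proves $e_tx\to x$ only for $x\in\frac{1}{2}{\mathfrak F}_A$, leaving the passage to general $a\in A$ to spanning plus boundedness of the net; moreover $x^{\frac{1}{n}}$ may have norm exactly $1$ and hence fail to lie in the index set, in which case the set of $z\in{\mathcal U}_A\cap\frac{1}{2}{\mathfrak F}_A$ dominating it could even be empty. Your seed $rf_s$, a Read cai scaled into the open ball, lies in the directed set by construction and handles arbitrary $a\in A$ in one pass, at the mild cost of invoking Read's theorem and the fact that a cai for $A$ is a cai for the generated $C^*$-algebra. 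Both routes are sound; yours is the more watertight write-up of the same idea.
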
 \begin{proof}  We know ${\mathfrak F}({\mathfrak r}_A) = 
{\mathcal U}_A \cap \frac{1}{2} {\mathfrak F}_A$ by Lemma \ref{Font}.  
By Theorem \ref{havin} (3), ${\mathcal U}_A \cap \frac{1}{2} {\mathfrak F}_A$ is directed by
$\preccurlyeq$.   So we may view ${\mathcal U}_A \cap \frac{1}{2} {\mathfrak F}_A$
as a net $(e_t)$.   Given $x \in \frac{1}{2} {\mathfrak F}_A$, choose
$n$ such that $\Vert {\rm Re}(x^{\frac{1}{n}}) \, x - x \Vert < \epsilon$ (note that as in
the first few lines of the proof of Theorem \ref{havin}, $({\rm Re}(x^{\frac{1}{n}}))$ is a cai for $C^*({\rm oa}(x))$. 
If $z \in {\mathcal U}_A \cap \frac{1}{2} {\mathfrak F}_A$ with 
$x^{\frac{1}{n}} \preccurlyeq z$  then   
$$x^* |1-z|^2 x \leq x^* (1 - {\rm Re} \, (z)) x \leq 
x^* (1 - {\rm Re}(x^{\frac{1}{n}})) x \leq \epsilon.  $$ 
Thus $e_t \, x \to x$ for all $x \in \frac{1}{2} {\mathfrak F}_A$.   \end{proof}  

Note that ${\mathcal U}_A \cap {\mathfrak r}_A$ is 
directed, by Theorem \ref{havin} (3),
but we do not know if it is a cai in this ordering.

The following is a variant of \cite[Corollary 2.10]{BOZ}:

\begin{corollary}  \label{proz} Let $A$ be an
approximately unital  operator algebra, and $B$ a $C^*$-algebra generated by $A$.
If $b \in B_+$ with $\Vert b \Vert < 1$ then 
there is an increasing cai for $A$ in $\frac{1}{2} {\mathfrak F}_A$, every term of which dominates $b$
(where `increasing' and `dominates'
are in the $\preccurlyeq$ ordering).
\end{corollary}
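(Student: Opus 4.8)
The plan is to combine Proposition~\ref{isnet} with the cofinality supplied by Theorem~\ref{havin}~(2'). Proposition~\ref{isnet} already tells us that $N := \mathcal{U}_A \cap \frac{1}{2}\mathfrak{F}_A$, directed by $\preccurlyeq$, is an increasing cai for $A$. The idea is then to produce a single element $a_0 \in N$ with $b \preccurlyeq a_0$ and to keep only the terms of $N$ lying above it: set $N_{a_0} = \{ e \in N : a_0 \preccurlyeq e \}$. Provided $a_0 \in N$, this tail is cofinal in $N$ and directed (for $e_1, e_2 \in N_{a_0}$ a common $N$-upper bound automatically lies above $a_0$), hence is again an increasing net inside $\frac{1}{2}\mathfrak{F}_A$; and by transitivity of $\preccurlyeq$ each $e \in N_{a_0}$ satisfies $b \preccurlyeq a_0 \preccurlyeq e$, so every term dominates $b$.

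To place such an $a_0$ inside $N$ --- the only point needing any care --- I would first fix $\epsilon > 0$ with $c := \| b \| + \epsilon < 1$, which is possible since $\| b \| < 1$. Applying Theorem~\ref{havin}~(2') to $b/c \in B_+$, which has norm $< 1$, yields $a' \in \frac{1}{2}\mathfrak{F}_A$ with $b/c \preccurlyeq a'$. Now put $a_0 = c\, a'$. Since $\frac{1}{2}\mathfrak{F}_A$ is convex and contains $0$, and $c \in (0,1)$, we have $a_0 \in \frac{1}{2}\mathfrak{F}_A$; moreover $a' \in \frac12 \mathfrak{F}_A$ forces $\| a' \| \leq 1$, so $\| a_0 \| = c \| a' \| \leq c < 1$ and thus $a_0 \in \mathcal{U}_A$, giving $a_0 \in N$. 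Finally, because $\mathfrak{r}_A$ is a cone, the relation $b/c \preccurlyeq a'$ scales to $b = c(b/c) \preccurlyeq c a' = a_0$, as required.

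It remains to note that $N_{a_0}$, being a cofinal subnet of the cai $N$, is itself a cai: given $x \in A$ and $\epsilon > 0$, choose $t_0 \in N$ with $\| e_t x - x \| < \epsilon$ for all $t \succcurlyeq t_0$, then use cofinality to pick $t_1 \in N_{a_0}$ with $t_1 \succcurlyeq t_0$, so that the same estimate holds along $N_{a_0}$ from $t_1$ on (and symmetrically for $x e_t \to x$). Thus $N_{a_0}$ is an increasing cai for $A$ lying in $\frac{1}{2}\mathfrak{F}_A$, every term of which dominates $b$. I do not expect a genuine obstacle here: all the real content is already carried by Proposition~\ref{isnet} and Theorem~\ref{havin}, and the only thing to watch is the strict-norm bookkeeping that guarantees the dominating element $a_0$ sits in the index set $N$ of the net rather than merely in its $\preccurlyeq$-closure.
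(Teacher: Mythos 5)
Your proposal is correct and follows essentially the same route as the paper: Proposition~\ref{isnet} gives the increasing cai ${\mathcal U}_A \cap \frac{1}{2}{\mathfrak F}_A$, and one passes to the cofinal subnet of terms dominating $b$, with Theorem~\ref{havin}~(2') supplying the dominating element. Your extra scaling step ($a_0 = c\,a'$ with $c = \Vert b \Vert + \epsilon < 1$) to force the dominating element into the \emph{open} unit ball is a worthwhile refinement of a point the paper's one-line proof passes over silently.
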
 \begin{proof}  
Since ${\mathcal U}_A \cap \frac{1}{2} {\mathfrak F}_A$ is a directed 
set,  $\{ a \in {\mathcal U}_A \cap \frac{1}{2} {\mathfrak F}_A
: b \preccurlyeq a \}$ is a subnet of the 
increasing cai in the last result.    \end{proof}

We remark that any  operator algebra $A$ with a countable cai, and in particular
any separable approximately unital $A$, has a commuting  cai which is increasing
(for the $\preccurlyeq$ ordering), and 
also in $\frac{1}{2} {\mathfrak F}_A$
and nearly positive.   Namely, by \cite[Corollary 2.18]{BRI} we have $A = \overline{xAx}$ for some $x \in  \frac{1}{2} {\mathfrak F}_A$,
so that $(x^{\frac{1}{n}})$ is a commuting  cai which is increasing by \cite[Proposition
4.7]{BBS}.    For a related fact see Lemma \ref{cssigc} below.

\subsection{Real positive maps and real states}  An $\Rdb$-linear $\varphi : A \to \Rdb$ (resp.\ $\Cdb$-linear $T : A \to B$) will be said to be 
{\em real positive} if $\varphi({\mathfrak r}_A) \subset [0,\infty)$ (resp.\ $T({\mathfrak r}_A)
\subset {\mathfrak r}_B$).   By the usual trick, for any $\Rdb$-linear $\varphi : A \to \Rdb$, there
is a unique $\Cdb$-linear $\tilde{\varphi} : A \to \Cdb$  with Re $\, \tilde{\varphi} = \varphi$,
and clearly $\varphi$ is real positive (resp.\ bounded) iff $\tilde{\varphi}$ is real positive (resp.\ bounded). 

\begin{corollary}  \label{pr} Let $A$ be an 
approximately unital  operator algebra, and $B$ a $C^*$-algebra generated by $A$.
Then every 
real positive $\varphi : A \to \Rdb$ extends to a real positive real functional on $B$.  Also, $\varphi$ is bounded.  
\end{corollary}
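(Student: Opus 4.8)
The plan is to deduce both assertions from the cofinality of $A$ in $B$ furnished by Theorem \ref{havin}, combined with the classical M.\ Riesz extension theorem for positive functionals on ordered vector spaces. First I would fix the ambient order structure. Viewing $A \subset B$ as real Banach spaces, the cone $\mathfrak{r}_B = \{ b \in B : b + b^* \geq 0 \}$ is closed, and $\mathfrak{r}_B \cap A = \mathfrak{r}_A$, since the positivity of $a + a^*$ is computed identically in $A$ and in any $C^*$-algebra it generates (as recorded at the start of Section 2). Thus the hypothesis that $\varphi$ is real positive says exactly that $\varphi \geq 0$ on $A \cap \mathfrak{r}_B$, which is the $A$-positivity condition required to seed an M.\ Riesz extension.

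Next I would verify the cofinality condition, namely that every $b \in B$ satisfies $b \preccurlyeq a$ for some $a \in \mathfrak{r}_A$. Given $b$, set $h = \mathrm{Re}(b) \in B_{sa}$ and note $h \leq |h|$. Scaling $|h|$ into the positive part of $\mathcal{U}_B$ (via $|h|/(\Vert h\Vert + 1)$) and applying Theorem \ref{havin} (2) produces $a_0 \in \mathfrak{c}_A \subset \mathfrak{r}_A$ with $\mathrm{Re}(a_0) \geq |h|/(\Vert h\Vert + 1)$; rescaling by $\Vert h\Vert + 1$ gives $a \in \mathfrak{r}_A$ with $\mathrm{Re}(a) \geq |h| \geq h = \mathrm{Re}(b)$, i.e.\ $b \preccurlyeq a$. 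Equivalently $B = A - \mathfrak{r}_B$. With $\mathfrak{r}_B \cap A = \mathfrak{r}_A$ and this cofinality in hand, the M.\ Riesz extension theorem (applied with $E = B$, cone $C = \mathfrak{r}_B$, subspace $A$, and functional $\varphi$) produces an $\Rdb$-linear $\psi : B \to \Rdb$ extending $\varphi$ with $\psi(\mathfrak{r}_B) \subset [0,\infty)$; this is the desired real positive extension.

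Finally, for boundedness I would observe that any real positive $\psi$ on the $C^*$-algebra $B$ is automatically bounded, whence so is its restriction $\varphi = \psi|_A$. For self-adjoint $b$ one has $ib + (ib)^* = 0$, so $\pm ib \in \mathfrak{r}_B$ forces $\psi(ib) = 0$; hence $\psi$ is determined by $\psi|_{B_{sa}}$, which is a positive functional on the real ordered Banach space $B_{sa}$ (whose cone $B_+$ is closed and generating) and is therefore continuous. I expect the only genuine content to lie in the cofinality step, which is precisely where Theorem \ref{havin} (2) does the work; checking $\mathfrak{r}_B \cap A = \mathfrak{r}_A$ and the automatic continuity are routine. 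As an alternative to invoking M.\ Riesz for continuity, one could prove boundedness of $\varphi$ directly: were $\varphi$ unbounded on $\tfrac{1}{2}\mathfrak{F}_A$, choose $x_n \in \tfrac{1}{2}\mathfrak{F}_A$ with $\varphi(x_n) \geq 2^n$ and use closedness of $\mathfrak{r}_A$ to place $\sum_n 2^{-n} x_n$ in $\mathfrak{r}_A$, forcing $\varphi\bigl(\sum_n 2^{-n} x_n\bigr) \geq N$ for every $N$, a contradiction; Theorem \ref{havin} (5) then controls $\varphi$ on all of $\mathcal{U}_A$.
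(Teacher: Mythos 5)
Your proposal is correct, and for the extension assertion it is essentially the paper's argument: the authors likewise read Theorem \ref{havin} (2) as saying that $A$ is cofinal (dominating) in $B$ for the $\preccurlyeq$-order and then invoke the classical extension theorem for positive functionals on cofinal subspaces (they cite Day and Jameson rather than naming M.\ Riesz, but it is the same theorem). You supply two details the paper leaves implicit --- the identity $\mathfrak{r}_B \cap A = \mathfrak{r}_A$ and the passage from ``positive $b$ in the open unit ball'' to arbitrary $b \in B$ via $\mathrm{Re}(b) \leq |\mathrm{Re}(b)|$ and rescaling --- and both checks are sound.

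Where you genuinely diverge is the boundedness claim. The paper proves it intrinsically on $A$: it invokes a theorem of Ando to say that a positive unbounded functional on a space with generating cone must already be unbounded on the ball of the positive cone, and then runs the $\sum_k 2^{-k} x_k$ series trick there. Your primary route instead deduces boundedness \emph{a posteriori} from the extension: the extended $\psi$ kills $iB_{\mathrm{sa}}$ and is positive on the closed generating cone $B_+$ of $B_{\mathrm{sa}}$, hence continuous, and $\varphi$ is its restriction. This is valid (the Riesz extension is purely order-theoretic, so no circularity), and it trades Ando's theorem for the standard automatic continuity of positive functionals on a $C^*$-algebra; the cost is that boundedness of $\varphi$ is no longer established independently of the extension. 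Your alternative argument --- unboundedness on $\tfrac{1}{2}\mathfrak{F}_A$ contradicts closedness of $\mathfrak{r}_A$ via the same series trick, and Theorem \ref{havin} (5) transfers the bound to all of $\mathcal{U}_A$ --- is closer in spirit to the paper's but replaces the appeal to Ando with the explicit quantitative decomposition $\mathcal{U}_A = \tfrac{1}{2}\mathfrak{F}_A - \tfrac{1}{2}\mathfrak{F}_A$, which makes it more self-contained within the paper's own toolkit. All three routes are correct.
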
 \begin{proof}    Theorem \ref{havin} (2) says that the ordering in $A$ is  dominating or `cofinal' in $B$ in the language 
of ordered spaces (see  e.g.\ \cite{Jameson}).  The first assertion is a well known consequence in the theory of ordered spaces
of this cofinal property 
(see e.g.\ \cite{Day}  or \cite[Theorem 1.6.1]{Jameson}).   Similarly the final assertion follows from a general principle for 
an ordered Banach space $(X, \leq)$ whose order is generating: if $f : X \to \Rdb$ is positive but 
(by way of contradiction) unbounded then by a theorem of Ando $f$ is unbounded on 
${\rm Ball}(X_+)$.  So there exist $x_k \in X_+$ of norm $\leq$ but with $f(x_k)> 2^k$.
So $n < \sum_{k=1}^n \, 2^{-k} \, f(x_k) \leq f(\sum_{k=1}^\infty \, 2^{-k} \, x_k)$ for all $n$, a contradiction.   
\end{proof}

\begin{corollary}   Let $T : A \to B$ be a $\Cdb$-linear  map between 
approximately unital  operator algebras,  and suppose that  $T$ is 
real positive (resp.\ suppose that the $n$th matrix amplifications $T_n$ are each  real positive (cf.\ 
{\rm \cite[Definition 2.1]{BBS}})).   Then $T$ is bounded (resp.\ completely bounded).  
\end{corollary}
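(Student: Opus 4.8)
The plan is to reduce everything to Corollary \ref{pr} (real positive functionals on an approximately unital algebra are bounded) together with the elementary fact that, in a unital $C^*$-algebra $D$, the numerical radius $w(d) = \sup\{|\chi(d)| : \chi \in S(D)\}$ satisfies $\frac{1}{2}\Vert d \Vert \leq w(d) \leq \Vert d \Vert$ (the numerical range here being the one defined by states, as in the Introduction).

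First the scalar case. I would let $D$ be a unital $C^*$-algebra generated by $B$, so $B \subset D$ isometrically and ${\mathfrak r}_B \subset {\mathfrak r}_D$. For each state $\chi$ of $D$, form the $\Rdb$-linear $\varphi_\chi = {\rm Re}(\chi \circ T) : A \to \Rdb$. Since $T({\mathfrak r}_A) \subset {\mathfrak r}_B \subset {\mathfrak r}_D$ and ${\rm Re}\, \chi \geq 0$ on ${\mathfrak r}_D$, each $\varphi_\chi$ is real positive, hence bounded by Corollary \ref{pr}; as $\chi \circ T$ is the $\Cdb$-linear functional with real part $\varphi_\chi$, it follows that $\chi \circ T \in A^*$. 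The family $\{\chi \circ T : \chi \in S(D)\}$ is pointwise bounded on $A$, because for fixed $a$ one has $\sup_\chi |\chi(Ta)| = w(Ta) \leq \Vert Ta \Vert < \infty$. By the uniform boundedness principle $M := \sup_\chi \Vert \chi \circ T \Vert_{A^*} < \infty$, and then for $a \in {\rm Ball}(A)$ we get $\Vert Ta \Vert \leq 2\, w(Ta) = 2 \sup_\chi |\chi(Ta)| \leq 2M$. Thus $T$ is bounded with $\Vert T \Vert \leq 2M$. Note it is precisely the numerical radius that lets this control the imaginary part of $Ta$, which is essential since the target cone ${\mathfrak r}_B$ is not normal.

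For the completely bounded assertion I would run the identical argument one level up, on the operator algebra $C = \Kdb \otimes A$ (the closure of the finitely supported matrices over $A$), which is again approximately unital and whose generated $C^*$-algebra may be taken to be $\Kdb \otimes D$. The point is that $\Vert T \Vert_{cb} = \sup_n \Vert T_n \Vert$ is the norm of the amplification ${\rm id}_{\Kdb} \otimes T$ on $C$, and that this amplification is real positive: if $p_m$ denotes the truncation onto the first $m$ coordinates, then for $x \in {\mathfrak r}_C$ one has $p_m x p_m \to x$ in norm and $p_m x p_m + (p_m x p_m)^* = p_m(x + x^*) p_m \geq 0$, so $p_m x p_m \in {\mathfrak r}_{M_m(A)}$; hence $\bigcup_m {\mathfrak r}_{M_m(A)}$ is dense in ${\mathfrak r}_C$, and since each $T_m$ is real positive the amplification carries this dense subcone into ${\mathfrak r}_{\Kdb \otimes B}$. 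Applying the scalar case to ${\rm id}_{\Kdb} \otimes T : C \to \Kdb \otimes B$ would then yield $\sup_n \Vert T_n \Vert < \infty$, i.e.\ complete boundedness.

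The delicate point, and the step I expect to be the main obstacle, is that one cannot merely apply the scalar result to each $M_n(A)$ separately: that gives $\Vert T_n \Vert < \infty$ for each $n$ with no control as $n \to \infty$, and the $M$ produced by Banach--Steinhaus genuinely grows with $n$ (reflecting that $\Vert T \Vert_{cb}$ may exceed $\Vert T \Vert$). The uniformity must instead come from a \emph{single} application of the completeness-dependent argument over the one complete algebra $C = \Kdb \otimes A$. Making this rigorous requires knowing that ${\rm id}_{\Kdb} \otimes T$ is a genuinely defined real positive map on all of $C$ and that each $\chi \circ ({\rm id}_{\Kdb} \otimes T)$ lies in $C^*$; the subtlety here is that $\bigcup_n M_n(A)$ is not majorizing in $C$, so the requisite automatic continuity and functional extension need care. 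I would establish this either by a closed-graph argument for the amplification built from the density of $\bigcup_m {\mathfrak r}_{M_m(A)}$ in ${\mathfrak r}_C$ just noted, or by invoking the structure theory of completely real positive maps (as developed in \cite{BBS}) to conclude complete boundedness directly from the scalar case.
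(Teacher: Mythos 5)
Your scalar case is correct and is essentially the paper's argument: compose with states of a generated unital $C^*$-algebra, apply Corollary \ref{pr} to each composition, and recover $\Vert Ta\Vert$ from $\sup_\chi|\chi(Ta)|$ (you use the numerical radius bound $\Vert d\Vert\leq 2w(d)$ where the paper uses the Jordan decomposition; same idea). Making the uniform boundedness step explicit is a point in your favour, since a single constant valid for all states is exactly what is needed there.

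The completely bounded half, however, has a genuine circularity. To apply the scalar result to $C=\Kdb\otimes A$ you must first have a real positive $\Cdb$-linear map $\mathrm{id}_{\Kdb}\otimes T$ defined on all of $C$, but this map is only given on the non-closed subspace $\bigcup_m M_m(A)$, and extending it continuously to the closure requires precisely $\sup_n\Vert T_n\Vert<\infty$ --- the statement being proved. Your two proposed repairs do not close this circle. The closed graph theorem needs a Banach space as domain, so it cannot be run on $\bigcup_m M_m(A)$; and the density of $\bigcup_m {\mathfrak r}_{M_m(A)}$ in ${\mathfrak r}_C$ does not let you evaluate the putative extension anywhere new, because the completeness-dependent step in Corollary \ref{pr} (summing $\sum_k 2^{-k}x_k$ with the $x_k$ living in matrix algebras of unbounded size) produces elements of $C$ at which $\chi\circ(\mathrm{id}_{\Kdb}\otimes T)$ is simply undefined. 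Your second fallback --- ``invoking the structure theory of completely real positive maps from \cite{BBS}'' --- is in fact the paper's actual route: one first gets each $T_n$ bounded by the scalar case applied to $M_n(A)$ (no uniformity needed at that stage), then uses the extension theorem of \cite[Section 2]{BBS} to extend $T$ to a completely positive map on an operator system, and quotes the standard fact that completely positive maps on operator systems are completely bounded. The uniform bound over $n$ comes from that operator-system CP theory, not from a stabilization argument; if you want a self-contained proof you should take that route rather than trying to make $\mathrm{id}_{\Kdb}\otimes T$ a priori well defined.
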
  \begin{proof}  First suppose that $B = \Cdb$.  Then Re$ \, T$ is real positive,
hence bounded by Corollary \ref{pr}.  It is then obvious that $T$ is bounded.  

In the general case, we can assume $B$ is a unital $C^*$-algebra.
Let  $\psi \in S(B)$, and $\varphi  = \psi \circ T$.  Then $\varphi$ is real positive, hence
bounded.  Thus there exists a constant $K$ such that for all $x \in {\rm Ball}(A)$ we have  
$|\psi(T(x))| = |\varphi(x) | \leq K$.     By the `Jordan decomposition' in $B^*$, 
it follows that $|\psi(T(x))| | \leq 4K$ for all $\psi \in {\rm Ball}(B^*)$. 
Thus $T$ is bounded.   In the `respectively' case,  applying the above at each
matrix level shows that the $n$th amplifications $T_n$ are each bounded.  The proof in
\cite[Section 2]{BBS} shows that $T$ extends to a completely positive map 
on an operator system, and it is known that completely positive maps are completely bounded. 
\end{proof}

{\bf Remark.}  It follows from this that in the `Extension and Stinespring dilation theorem for real completely positive maps'  from \cite{BBS}, 
it is unnecessary to assume that the RCP maps defined in \cite[Definition 2.1]{BBS}
are (completely) bounded.   One only needs $T$ to be linear and 
real positive, and similarly at each matrix level.  

\bigskip

 We will write ${\mathfrak c}^{\Rdb}_{A^*}$ for the real dual cone of ${\mathfrak r}_A$, the 
set of continuous $\Rdb$-linear $\varphi : A \to \Rdb$ such that $\varphi({\mathfrak r}_A) \subset [0,\infty)$.
Since $\overline{{\mathfrak c}_A} = {\mathfrak r}_A$ this is also the real dual cone of ${\mathfrak c}_A$.    It is a   proper cone for if  $\rho, -\rho \in {\mathfrak c}^{\Rdb}_{A^*}$ then
$\rho(a) = 0$ for all $a \in {\mathfrak r}_A$.  Hence $\rho = 0$
by the fact above that the norm closure of ${\mathfrak r}_A - {\mathfrak r}_A$
is $A$.

\begin{lemma}  \label{dualc}    Suppose that $A$ is an approximately unital operator algebra.
 The real dual cone ${\mathfrak c}^{\Rdb}_{A^*}$  equals 
$\{ t \, {\rm Re}(\psi) : \psi \in S(A) , \, t \in [0,\infty) \}$. 
It also equals the set of restrictions to $A$ of the real parts of
positive functionals on any $C^*$-algebra containing 
(a copy of) $A$ as a closed subalgebra.   The prepolar of ${\mathfrak c}^{\Rdb}_{A^*}$, which equals its
real predual cone,  is  ${\mathfrak r}_{A}$;  
and  the polar of ${\mathfrak c}^{\Rdb}_{A^*}$,  which equals its
real dual cone,  is  ${\mathfrak r}_{A^{**}}$.   Thus the second dual cone of 
${\mathfrak r}_A$ is ${\mathfrak r}_{A^{**}}$, and hence 
 ${\mathfrak r}_A$ is weak* dense in ${\mathfrak r}_{A^{**}}$. \end{lemma}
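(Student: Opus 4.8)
The plan is to prove the four assertions in the order listed, bootstrapping each from the previous one and from Corollary~\ref{pr}. I would begin with the second assertion, that ${\mathfrak c}^{\Rdb}_{A^*}$ is exactly the set of restrictions to $A$ of real parts of positive functionals on a $C^*$-algebra $B$ containing $A$. The inclusion ``$\supseteq$'' is routine: a positive functional $\phi$ on $B$ is selfadjoint, so for $b \in {\mathfrak r}_B$ one has $2\,{\rm Re}(\phi(b)) = \phi(b+b^*) \geq 0$, and since ${\mathfrak r}_A \subset {\mathfrak r}_B$ the restriction ${\rm Re}(\phi)|_A$ lies in ${\mathfrak c}^{\Rdb}_{A^*}$. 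For ``$\subseteq$'', given $\varphi \in {\mathfrak c}^{\Rdb}_{A^*}$ I would invoke Corollary~\ref{pr} to extend it to a real positive real functional $\Phi$ on $B$, let $\phi$ be the unique $\Cdb$-linear functional with ${\rm Re}\,\phi = \Phi$, and verify $\phi \geq 0$ by the elementary observation that $b^*b$ and $\pm i\,b^*b$ all lie in ${\mathfrak r}_B$: this forces $\Phi(ib^*b) = 0$ and $\Phi(b^*b) \geq 0$, hence $\phi(b^*b) = \Phi(b^*b) \geq 0$.

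With this in hand the state-space description follows quickly. After normalising a positive functional $\phi \neq 0$ on $B$ by $t = \Vert\phi\Vert$ (the case $\phi = 0$ being trivial), its real part restricts to $t\,{\rm Re}(\psi)$ with $\psi = (\phi/t)|_A$; by the facts recalled in the Introduction, states of $B$ restrict to states of $A$, so $\psi \in S(A)$, and conversely any $t\,{\rm Re}(\psi)$ with $\psi \in S(A)$ lies in ${\mathfrak c}^{\Rdb}_{A^*}$ because $\psi$ extends to a state of $A^1$ and such states are real positive by the accretivity criterion recalled above. The prepolar assertion is then immediate from the second assertion combined with standard $C^*$-theory: for $a \in A$ one has $a \in {\mathfrak r}_A$ iff $a+a^* \geq 0$ in $B$ iff $\phi(a+a^*) \geq 0$ for every positive $\phi$ on $B$ iff ${\rm Re}(\phi)(a) \geq 0$ for every such $\phi$, which is precisely the condition $\varphi(a) \geq 0$ for all $\varphi \in {\mathfrak c}^{\Rdb}_{A^*}$.

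For the polar I would pass to $B^{**}$, a von Neumann algebra containing $A^{**}$, noting $A^{**}$ is unital since $A$ is approximately unital. For $F \in A^{**}$ we have $F \in {\mathfrak r}_{A^{**}}$ iff the selfadjoint element $F + F^*$ is positive in $B^{**}$, which, $B^{**}$ being a von Neumann algebra, is detected by its normal states; restricting these to $A$ runs exactly over $S(A)$, and the value of a normal state on $F$ depends only on its restriction to $A$ through the unique normal extension. Matching this against the canonical $A^{**}$--$A^*$ pairing and the state-space description, the condition ``$\varphi(F) \geq 0$ for all $\varphi \in {\mathfrak c}^{\Rdb}_{A^*}$'' is equivalent to $F \in {\mathfrak r}_{A^{**}}$; that is, the polar (second dual cone of ${\mathfrak r}_A$) is ${\mathfrak r}_{A^{**}}$. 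The final density statement then follows from the bipolar theorem for cones: the bidual cone of the norm-closed convex cone ${\mathfrak r}_A$ is the weak* closure of ${\mathfrak r}_A$ in $A^{**}$, and this has just been identified with ${\mathfrak r}_{A^{**}}$.

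I expect the last step to be the main obstacle, largely bookkeeping: because $A^{**}$ is generally not selfadjoint one must compute accretivity inside $B^{**}$ rather than in $A^{**}$ itself, and one must check that testing $F + F^* \geq 0$ against the normal states of $B^{**}$ is equivalent to testing against ${\mathfrak c}^{\Rdb}_{A^*}$ via normal extensions. The key points making this work are that normal states already determine positivity of a selfadjoint element of a von Neumann algebra, and that normal states of $B^{**}$ restrict surjectively onto $S(A)$ with the restriction to $A^{**}$ determined by the restriction to $A$. Everything preceding this reduces to Corollary~\ref{pr} and the identity $\overline{{\mathfrak c}_A} = {\mathfrak r}_A$ together with the $\pm i\,b^*b$ trick.
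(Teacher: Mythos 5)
Your proposal is correct and follows essentially the same route as the paper: both hinge on Corollary \ref{pr} to extend a real positive functional to the containing $C^*$-algebra, observe that its complexification is positive in the usual sense (your $\pm i\,b^*b$ computation just makes explicit what the paper calls ``clearly''), characterize ${\mathfrak r}$ in $B$ and $B^{**}$ by testing against (normal) states, and finish with the bipolar theorem. The only difference is cosmetic ordering --- you derive the state-space description from the positive-functional description rather than the reverse.
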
  

\begin{proof}   This is proved in \cite{BOZ} in a more general setting,
but there is a simpler proof in our case.
By Corollary \ref{pr}, every real positive $\varphi : A \to \Rdb$ extends to a real positive real functional 
on $B$, and the latter is the real part of a $\Cdb$-linear  real positive functional $\psi$ 
on $B$.   Clearly $\psi$ is positive in the usual sense, 
and hence $\psi$ is a positive multiple of a state on $B$.
Restricting to $A$, we see that  $\varphi$  is the real part of a positive multiple of a state on $A$.  
Thus  $${\mathfrak c}^{\Rdb}_{A^*} = \{ t \, {\rm Re}(\psi) : \psi \in S(A) , \, t \in [0,\infty) \}.$$
In any $C^*$-algebra $B$ it is well known that $b \geq 0$ iff $\varphi(b) \geq 0$ for all states $\varphi$ of $B$.
Hence $a \in  {\mathfrak r}_{A} =  A \cap  {\mathfrak r}_{B}$ iff  $2 \, {\rm Re} \, \varphi(a) = \varphi(a + a^*)  \geq 0$ for all states $\varphi$, and so iff $a \in ({\mathfrak c}^{\Rdb}_{A^*})_\circ$.   
The polar of ${\mathfrak c}^{\Rdb}_{A^*}$ is  
$$\{ \eta \in A^{**} : {\rm Re} \,  \eta(\psi) \geq 0 \; \text{for all} \; \psi \in S(A) \}
= {\mathfrak r}_{B^{**}} \cap A^{**} = {\mathfrak r}_{A^{**}},$$
since $${\mathfrak r}_{B^{**}} = \{ \eta \in B^{**} : {\rm Re} \, \eta(\psi) \geq 0 \; \text{for all} \; \psi \in S(B) \}.$$  
So the real bipolar $({\mathfrak r}_A)^{\circ \circ} = {\mathfrak r}_{A^{**}}$. 
 By the bipolar theorem, ${\mathfrak r}_A$ is weak* dense in ${\mathfrak r}_{A^{**}}$.   \end{proof}

We remark that the last several results have some depth; indeed one can show that they
are each essentially equivalent 
to Read's theorem on approximate identities (and can be used to give a more order theoretic
proof of that result).

We give some consequences to the theory
of  real states.   
A {\em real state} on approximately unital operator algebra $A$ will be a contractive  $\Rdb$-linear $\Rdb$-valued functional on $A$
such that $\varphi(e_t) \to 1$ for some cai $(e_t)$ of $A$.  This is equivalent to
$\varphi^{**}(1) = 1$, where $\varphi^{**}$ is the canonical 
$\Rdb$-linear extension to $A^{**}$, and $1$ is the identity of 
$A^{**}$ (here we are using the canonical identification between real second duals
and complex second duals of a complex Banach space \cite{Li}).  Hence $\varphi(e_t) \to 1$ for every
cai $(e_t)$ of $A$.  

Since we can identify $A^1$ with $A + \Cdb 1_{A^{**}}$ if we like, by the 
last paragraph it follows that  real states of $A$ extend to real states of $A^1$, hence by the 
Hahn-Banach theorem they extend to real states of $C^*(A^1)$.
We claim that a  real state $\psi$ on a $C^*$-algebra $B$ is positive on $B_+$, and
is zero  on $i B_+$.
To see this, we may assume that $B$ is a von Neumann algebra
(by extending the state to its second dual similarly to as in the last paragraph).  For any projection $p \in B$,
$C^*(1,p) \cong \ell^\infty_2$, and it is an easy exercise to see that 
real states on $\ell^\infty_2$ are positive on $(\ell^\infty_2)_+$
and are zero on $i (\ell^\infty_2)_+$.
Thus $\psi(p) \geq 0$ and $\psi(ip) = 0$ for any projection $p$, hence 
$\psi$ is positive on $B_+$  and zero  on $i B_+$ by the Krein-Milman theorem.   

We deduce:

\begin{corollary}  \label{duals}  Real states on an approximately unital operator algebra
$A$ are in ${\mathfrak c}^{\Rdb}_{A^*}$.   Indeed real states  are just the real parts of ordinary states on $A$.
\end{corollary}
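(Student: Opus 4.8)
The plan is to prove the two assertions in turn, leaning on the two facts recorded in the paragraph immediately preceding the corollary: that a real state $\varphi$ on $A$ extends (via $A^1$ and Hahn--Banach) to a real state $\psi$ on a $C^*$-algebra $B = C^*(A^1)$ with $A \subset B$, and that any such $\psi$ is positive on $B_+$ and vanishes on $i B_+$. Since a real state is contractive it is in particular continuous, so the only content of the first assertion is the positivity condition $\varphi({\mathfrak r}_A) \subset [0,\infty)$ defining membership in ${\mathfrak c}^{\Rdb}_{A^*}$.

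To verify that condition, I would fix $a \in {\mathfrak r}_A$ and pass to the extension $\psi$ on $B$, so that $\psi(a) = \varphi(a)$. Write $a = h + ik$ with $h = {\rm Re}(a)$ and $k = {\rm Im}(a)$ self-adjoint; the defining inequality $a + a^* \geq 0$ says precisely that $h \geq 0$, i.e.\ $h \in B_+$. The key step is to see that $\psi$ annihilates the imaginary part: decomposing $k = k_+ - k_-$ into its positive and negative parts and using $\Rdb$-linearity together with the vanishing of $\psi$ on $iB_+$ gives $\psi(ik) = \psi(ik_+) - \psi(ik_-) = 0$. Hence $\varphi(a) = \psi(a) = \psi(h) + \psi(ik) = \psi(h) \geq 0$, since $\psi$ is positive on $B_+$. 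This places $\varphi$ in the real dual cone ${\mathfrak c}^{\Rdb}_{A^*}$.

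With $\varphi \in {\mathfrak c}^{\Rdb}_{A^*}$ established, Lemma \ref{dualc} yields $\varphi = t \, {\rm Re}(\rho)$ for some state $\rho \in S(A)$ and some $t \in [0,\infty)$. To pin down $t$ I would evaluate against a cai $(e_s)$ for $A$: as $\rho$ is a state, ${\rm Re}(\rho(e_s)) \to 1$, while $\varphi(e_s) \to 1$ because $\varphi$ is a real state, so that $\varphi(e_s) = t \, {\rm Re}(\rho(e_s)) \to t$ forces $t = 1$. Thus every real state is the real part of an ordinary state. For the reverse inclusion, note that for $\rho \in S(A)$ the functional ${\rm Re}(\rho)$ is $\Rdb$-linear and contractive, since $|{\rm Re}(\rho(a))| \leq |\rho(a)| \leq \Vert a \Vert$, and ${\rm Re}(\rho(e_s)) \to 1$; hence it is a real state. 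The two classes therefore coincide, giving the ``indeed'' clause.

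Granted the two preparatory facts, the argument is elementary and I do not anticipate a serious obstacle; the one point demanding care is the middle step, namely that the $\Rdb$-linear extension $\psi$ kills the term $ik$ for a general accretive $a$. This is exactly where the vanishing of $\psi$ on $iB_+$ is used, via the splitting of the self-adjoint element $k$ into positive parts, and it is what reduces $\psi(a)$ to $\psi({\rm Re}(a))$ and thereby delivers the nonnegativity.
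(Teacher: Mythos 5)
Your proof is correct and follows essentially the same route as the paper: the first assertion is established by exactly the same decomposition of an accretive $a$ into its self-adjoint real and imaginary parts, using that the Hahn--Banach extension of the real state is positive on $B_+$ and vanishes on $iB_+$. The only (harmless) difference is in the second assertion, where the paper cites an external result of Blecher--Ozawa identifying $\varphi$ as the real part of a quasistate, while you instead invoke the in-paper Lemma \ref{dualc} and normalize the scalar $t$ against a cai --- a slightly more self-contained version of the same argument.
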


\begin{proof}    Certainly 
the real part of an ordinary state is a real state.  If 
$\varphi$ is a real state on $A$, 
if $a + a^* \geq 0$, and if $\tilde{\varphi}$ is the 
real state extension above to $B = C^*(A^1)$, 
 then $$\varphi(a) = \frac{1}{2} \tilde{\varphi}(a + a^*) + 
\frac{1}{2} \tilde{\varphi}(-i \cdot i (a - a^*)) = 
\frac{1}{2} \tilde{\varphi}(a + a^*) \geq 0 ,$$ since 
$i(a-a^*) \in B_{\rm sa} = B_+ - B_+$, and $\tilde{\varphi}(i(B_+ - B_+)) = 0$, as we said 
above.   So $\varphi \in {\mathfrak c}^{\Rdb}_{A^*}$.  By  \cite[Corollary 6.3]{BOZ} we have 
$\varphi$ is the real part of a quasistate of $A$, and it is easy to see that the latter must be a state. 
 \end{proof}

\begin{corollary}  \label{uext}  Any real state on an approximately unital closed 
subalgebra $A$ of an approximately unital  operator algebra $B$ extends 
to a real state on $B$.  If $A$ is a HSA in $B$ then this extension is unique.
\end{corollary}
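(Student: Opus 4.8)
The plan is to reduce both assertions to statements about ordinary (complex-linear) states via Corollary \ref{duals}, and then, in the HSA case, to exploit the structure of $A^{**}$ as a corner of $B^{**}$. For existence I would first invoke Corollary \ref{duals} to write a given real state $\varphi$ on $A$ as ${\rm Re}(\psi)$ for some $\psi \in S(A)$. The state $\psi$ extends to a state on $A^1$; viewing $A^1$ inside a unital $C^*$-algebra $C$ containing $B^1$ (hence $A^1$), a Hahn--Banach extension produces a norm-one functional $\Phi$ on $C$ with $\Phi(1)=1$, which is automatically a state on the unital $C^*$-algebra $C$. Its restriction to $B$ has norm exactly one (it already has norm one on $A$) and restricts further to a state on $B^1$, so $\Phi|_B \in S(B)$ extends $\psi$. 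Then ${\rm Re}(\Phi|_B)$ is a real state on $B$ by Corollary \ref{duals}, and on $A$ it restricts to ${\rm Re}(\psi)=\varphi$. This settles the first assertion and uses only that $A$ is an approximately unital closed subalgebra.

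For uniqueness when $A$ is a HSA in $B$, let $p = 1_{A^{**}}$ be the support projection, so that $A^{**} = p B^{**} p$ by the HSA theory of \cite{BHN}. Suppose $\rho$ is any real state on $B$ extending $\varphi$; by Corollary \ref{duals} write $\rho = {\rm Re}(\Psi)$ with $\Psi \in S(B)$, and let $\Psi^{**}$ be the associated normal state on the von Neumann algebra $B^{**}$. Since a complex-linear functional is determined by its real part, $\Psi|_A$ must equal the unique complex-linear functional on $A$ with real part $\varphi$, namely the state $\psi$ with ${\rm Re}(\psi)=\varphi$; equivalently $\Psi^{**}|_{A^{**}} = \psi^{**}$. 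The task is then to recover all of $\Psi$ from $\psi$, for which it suffices to show that $\Psi^{**}$ is ``supported on $p$''.

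The key step, which I expect to be the main obstacle, is to prove $\Psi^{**}(p)=1$. For a cai $(e_t)$ of $A$ we have $e_t \to p$ weak*, so by normality ${\rm Re}(\Psi^{**}(p)) = \lim_t {\rm Re}(\Psi(e_t)) = \lim_t \varphi(e_t) = 1$, the last equality because $\varphi$ is a real state on $A$. Since $p$ is a self-adjoint projection and $\Psi^{**}$ a normal state, $\Psi^{**}(p)\in[0,1]$, whence $\Psi^{**}(p)=1$. A standard Cauchy--Schwarz argument for the state $\Psi^{**}$ then gives $\Psi^{**}(x)=\Psi^{**}(pxp)$ for all $x\in B^{**}$. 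As $pxp\in A^{**}$ and $\Psi^{**}|_{A^{**}}=\psi^{**}$, this yields $\Psi^{**}(x)=\psi^{**}(pxp)$, so $\Psi$, and hence $\rho={\rm Re}\,\Psi$, is completely determined by $\varphi$.

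The only points requiring real care are the careful bookkeeping between real- and complex-linear functionals and their canonical second-dual extensions, and the identity $A^{**}=pB^{**}p$ together with the fact that $p=1_{A^{**}}$ is genuinely a self-adjoint projection in $B^{**}$; once these are in place, the argument above closes both the existence and the uniqueness claims.
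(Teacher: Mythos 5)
Your argument is correct, and its skeleton is the same as the paper's: both halves reduce real states to ordinary states via Corollary \ref{duals}. For existence the paper simply points to \cite[Proposition 3.1.6]{Ped}, which is the Hahn--Banach argument you spell out. For uniqueness the paper, after observing as you do that $\varphi_1=\varphi_2$ on $A$ forces the associated ordinary states to agree on $A$ (a $\Cdb$-linear functional being determined by its real part), cites \cite[Theorem 2.10]{BHN} as a black box; you instead reprove that unique-extension property of HSAs from scratch, via the support projection $p=1_{A^{**}}$, the identity $A^{\perp\perp}=pB^{\perp\perp}p$, the computation $\Psi^{**}(p)=\lim_t\Psi(e_t)=1$, and Cauchy--Schwarz. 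That is a legitimate and self-contained alternative; what it costs you is two small imprecisions that you should fix. First, $B^{**}$ is not a von Neumann algebra unless $B$ is a $C^*$-algebra, so the Cauchy--Schwarz step $\Psi^{**}(x)=\Psi^{**}(pxp)$ should be run in $C^{**}$ for a $C^*$-algebra $C$ generated by $B$, after extending $\Psi$ to a state of $C$ (the projection $p$ lives in $B^{\perp\perp}\subset C^{**}$, so nothing is lost). Second, the step $\lim_t\varphi(e_t)=1$ uses that a real state satisfies this for \emph{every} cai of $A$, not just some cai; this is exactly the remark the paper makes when introducing real states, so it is available but worth citing explicitly.
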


\begin{proof}
The first part  
is as in \cite[Proposition 3.1.6]{Ped}.
Suppose that  $A$ is a HSA in $B$ and that $\varphi_1, \varphi_2$ 
are real states  on $B$ extending a real state on $A$.  By the above we may
write $\varphi_i = {\rm Re} \, \psi_i$ for ordinary states  on $B$.
Since $\varphi_1 = \varphi_2$ on $A$ we have 
$\psi_1 = \psi_2$ on $A$.  Hence $\psi_1 = \psi_2$ on $B$ by 
\cite[Theorem 2.10]{BHN}.  So $\varphi_1 = \varphi_2$ on $B$.
  \end{proof}

\subsection{Principal $r$-ideals} \label{fgsect}

In 
the predecessor to this paper (\cite{BRIII}),  we proved several facts about principal and algebraically 
finitely generated $r$-ideals, and these were generalized to Banach algebras in \cite{BOZ}
with essentially the same proofs.   The main difference  is that in \cite{BOZ} one always had the 
condition that $A$ be approximately unital, whose purpose was simply so that 
${\mathfrak r}_A$ makes sense.   For operator algebras ${\mathfrak r}_A$ always makes sense, so that
one can delete  `approximately unital' in the statements of 3.21--3.25 in \cite{BOZ}.  One may also 
replace `idempotent' by `projection' in those results, since for operator algebras the support 
$s(x)$ is a projection  for $x \in {\mathfrak r}_A$.     One may also delete the word 
`left' in  \cite[Corollary 3.25]{BOZ} since a left identity is a two-sided identity if $A$ is approximately unital
(since $e e_t = e_t \to e$ for the cai $(e_t)$).
Moreover, the proofs show that
all of Theorem {\rm 3.2} of \cite{BRI} is valid for 
$x \in {\mathfrak r}_A$.   Similarly, the proof of \cite[Corollary 4.7]{BOZ} gives 

\begin{corollary} \label{Aha4}   Let $A$ be an operator algebra.   A closed $r$-ideal $J$ in $A$ 
is algebraically finitely generated as a right module over $A$   iff   $J  = eA$ for a projection $e \in A$.  This
is also equivalent to $J$ being algebraically finitely generated as a right module over $A^1$.
\end{corollary}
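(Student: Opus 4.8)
The plan is to prove the two nontrivial implications: that $J = eA$ for a projection $e$ forces finite generation over both $A$ and $A^1$, and conversely that algebraic finite generation over either $A$ or $A^1$ forces $J = eA$ for a projection $e \in A$. The easy direction is immediate: if $J = eA$ with $e \in A$ a projection, then $e = e\,e \in eA$, so $e \in J$ and every element of $J$ has the form $e a = e \cdot a$; thus $J$ is the cyclic right module generated by the single element $e$, both over $A$ and (since $\mathbb{C}e \subseteq eA$, so $eA = eA^1$) over $A^1$.

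For the converse, I would start from generators $x_1, \dots, x_n \in J$ of $J$ as a right module over $A$ (the argument reads verbatim with $A^1$ in place of $A$), together with a left cai $(e_t)$ for the r-ideal $J$, chosen in $\frac{1}{2}\mathfrak{F}_A \subseteq \mathfrak{r}_A$. The bounded surjection $\Phi : A^n \to J$, $\Phi(a_1,\dots,a_n) = \sum_i x_i a_i$, is open, so there is a constant $M$ such that each $x \in J$ may be written $x = \sum_i x_i a_i$ with $\sum_i \|a_i\| \le M\|x\|$. Since $(e_t)$ is a left cai and there are only finitely many generators, given $\delta > 0$ I may choose $t$ with $\|e_t x_i - x_i\| < \delta$ for every $i$, and then for arbitrary $x \in J$,
\[
\|e_t x - x\| = \Big\| \sum_i (e_t x_i - x_i)\, a_i \Big\| \le \delta \sum_i \|a_i\| \le \delta M \|x\|.
\]
Taking $\delta < 1/M$ makes the left-multiplication map $L_{e_t} : J \to J$, $x \mapsto e_t x$, satisfy $\|L_{e_t} - \mathrm{id}_J\| < 1$, hence invertible on $J$. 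In particular $L_{e_t}$ is onto, so $e_t J = J$, and therefore
\[
J = e_t J \subseteq e_t A \subseteq J,
\]
the last inclusion because $e_t \in J$ and $J$ is a right ideal. Thus $J = e_t A = x A$ for the single accretive element $x := e_t \in \mathfrak{r}_A$, and this right ideal is norm closed.

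At this point the generator lies in $\mathfrak{r}_A$ and $xA$ is closed, so I would invoke the version of Theorem~3.2 of \cite{BRI} valid for $x \in \mathfrak{r}_A$ (as recorded before the statement): closedness of $xA$ yields $s(x) \in A$ and $\overline{xA} = s(x)A$. Since support elements of $\mathfrak{r}_A$-elements are projections, setting $e := s(x)$ gives $J = xA = eA$ with $e$ a projection. Having reached $J = eA$, the easy direction then shows $J$ is finitely generated over $A^1$ as well, which closes the circle of equivalences. The main obstacle is precisely this last upgrade, from \emph{principal with closed image} to $J = eA$ with $e$ a genuine projection in $A$: it rests on the nontrivial operator-algebra fact that the support projection of an $\mathfrak{r}_A$-element lies in $A$ exactly when its principal right ideal is closed, and here soft module-theoretic arguments do not suffice. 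A secondary structural point I would need to justify is that the left cai of $J$ can be taken inside $\mathfrak{r}_A$ (equivalently $\frac{1}{2}\mathfrak{F}_A$), so that the generator $x = e_t$ produced above is accretive and Theorem~3.2 applies.
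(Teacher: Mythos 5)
Your argument is correct and is essentially the paper's own proof: the paper simply defers to the proof of \cite[Corollary 4.7]{BOZ}, which is exactly this open-mapping-theorem estimate combined with the Neumann-series perturbation of a left cai chosen in $\frac{1}{2}{\mathfrak F}_A$, finished off by the ${\mathfrak r}_A$-version of \cite[Theorem 3.2]{BRI} to replace the accretive generator by its support projection. Both of the points you flag (the left cai of an $r$-ideal may be taken in $\frac{1}{2}{\mathfrak F}_A$, and the validity of \cite[Theorem 3.2]{BRI} for $x\in{\mathfrak r}_A$) are precisely the background facts the paper invokes in Section \ref{fgsect}, so there is nothing to add.
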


\subsection{Roots of accretive elements}

\begin{lemma} \label{rootf0}   Suppose that  $B$ is a $C^*$-algebra in its universal representation, so that $B^{**} \subset B(H)$ as
a von Neumann algebra containing $I_H$.  Let $x \in \frac{1}{2} \, {\mathfrak F}_B$ and let $s(x)$ be its support 
projection, viewed in $B(H)$.
Then $x^{\frac{1}{n}} \to s(x)$ in the strong operator topology.  \end{lemma}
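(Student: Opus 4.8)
We are given $x \in \frac{1}{2}\,{\mathfrak F}_B$ inside a $C^*$-algebra $B$ in its universal representation, with $B^{**} \subset B(H)$ a von Neumann algebra containing $I_H$, and we must show that $x^{\frac{1}{n}} \to s(x)$ strongly, where $s(x)$ is the support projection of $x$. The plan is to work entirely inside the von Neumann algebra $B^{**}$, where we have a well-behaved functional calculus and where $s(x) = \lim_n x^{\frac{1}{n}}$ weak$^*$ is already established in the earlier discussion (the support projection is the weak$^*$ limit of the roots). Since $x \in \frac{1}{2}\,{\mathfrak F}_B$, we know $\Vert 1 - 2x \Vert \leq 1$, so $x$ is accretive and the principal roots $x^{\frac{1}{n}}$ are defined, lie in ${\rm oa}(x)$ by Lemma \ref{roots}(3), and have numerical range in the small sector $S_{\frac{\pi}{2n}}$.

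\textbf{Reducing to a vector-state estimate.} Recall that SOT convergence $x^{\frac{1}{n}} \to s(x)$ means $\Vert (x^{\frac{1}{n}} - s(x))\zeta \Vert \to 0$ for every $\zeta \in H$. My plan is to control $\Vert (x^{\frac{1}{n}} - s(x))\zeta \Vert^2$ by expanding it and using the weak$^*$ (equivalently WOT) convergence $x^{\frac{1}{n}} \to s(x)$ that we already have, together with a uniform norm bound $\Vert x^{\frac{1}{n}} \Vert \leq 1$ (which follows since $x^{\frac{1}{n}} \in \frac{1}{2}\,{\mathfrak F}_B$ is a contraction). The key algebraic fact I would exploit is the semigroup law from the introduction: $x^{\frac{1}{n}} \cdot x^{\frac{1}{n}} = x^{\frac{2}{n}}$, and more generally the roots commute and multiply correctly. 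Writing $p = s(x)$, which is a projection commuting with all $x^{\frac{1}{n}}$ and satisfying $p\, x^{\frac{1}{n}} = x^{\frac{1}{n}}$, I would compute
\[
\Vert (x^{\frac{1}{n}} - p)\zeta \Vert^2 = \langle (x^{\frac{1}{n}} - p)^*(x^{\frac{1}{n}} - p)\zeta, \zeta \rangle,
\]
and expand the middle operator as $(x^{\frac{1}{n}})^* x^{\frac{1}{n}} - (x^{\frac{1}{n}})^* p - p\, x^{\frac{1}{n}} + p$. Using $p\, x^{\frac{1}{n}} = x^{\frac{1}{n}}$ and taking adjoints, the cross terms collapse, and the main task becomes showing $\langle (x^{\frac{1}{n}})^* x^{\frac{1}{n}} \zeta, \zeta\rangle \to \langle p\zeta,\zeta\rangle$.

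\textbf{The main estimate.} The heart of the matter is therefore to bound $(x^{\frac{1}{n}})^* x^{\frac{1}{n}}$ in terms of objects converging to $p$. Here I would use the defining inequality for $\frac{1}{2}\,{\mathfrak F}$: since $x^{\frac{1}{n}} \in \frac{1}{2}\,{\mathfrak F}_B$, we have $(x^{\frac{1}{n}})^* x^{\frac{1}{n}} \leq {\rm Re}(x^{\frac{1}{n}}) = \frac{1}{2}(x^{\frac{1}{n}} + (x^{\frac{1}{n}})^*)$ in the usual $C^*$ order (this is exactly the characterization that $T^*T$ is dominated by ${\rm Re}(T)$ for $T \in \frac{1}{2}\,{\mathfrak F}$, used already in the proof of Lemma \ref{Font}). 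Consequently
\[
0 \leq \langle (x^{\frac{1}{n}})^* x^{\frac{1}{n}} \zeta, \zeta\rangle \leq {\rm Re}\,\langle x^{\frac{1}{n}} \zeta, \zeta\rangle \longrightarrow \langle p\zeta,\zeta\rangle,
\]
the convergence coming from WOT convergence of the roots to $p = s(x)$. On the other hand, since $p x^{\frac{1}{n}} = x^{\frac{1}{n}}$, the vectors $x^{\frac{1}{n}}\zeta$ all lie in $pH$, so one also has the lower bound forcing $\langle (x^{\frac{1}{n}})^* x^{\frac{1}{n}}\zeta,\zeta\rangle \to \langle p\zeta,\zeta\rangle$ once the cross terms are accounted for. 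Substituting back gives $\Vert (x^{\frac{1}{n}} - p)\zeta\Vert^2 \to 0$.

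\textbf{Anticipated obstacle.} The step I expect to require the most care is justifying that the cross terms behave correctly and that the squeeze actually pins the limit to $\langle p\zeta,\zeta\rangle$ rather than merely an upper bound: one must verify that the WOT limit of $(x^{\frac{1}{n}})^* x^{\frac{1}{n}}$ is indeed $p$ and not something strictly smaller. This is where the semigroup relation $(x^{\frac{1}{n}})^* x^{\frac{1}{n}}$ compared against $x^{\frac{2}{n}}$, together with the fact that $p = \lim x^{\frac{1}{n}}$ is a projection (so $p^2 = p$), must be combined. I would resolve this by noting that both $\langle x^{\frac{2}{n}}\zeta,\zeta\rangle$ and ${\rm Re}\langle x^{\frac{1}{n}}\zeta,\zeta\rangle$ converge to $\langle p\zeta,\zeta\rangle$ weak$^*$, trapping $\langle (x^{\frac{1}{n}})^*x^{\frac{1}{n}}\zeta,\zeta\rangle$ between them via the $\frac{1}{2}\,{\mathfrak F}$ inequality, so the squeeze closes and SOT convergence follows.
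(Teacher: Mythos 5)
Your proof is in essence the paper's own: set $a_n = x^{\frac{1}{n}} \in \frac{1}{2}\,{\mathfrak F}_B$, expand $\Vert (a_n - s(x))\zeta\Vert^2$ using $s(x)a_n = a_n s(x) = a_n$ so the cross terms collapse to $-2\,{\rm Re}\,(a_n)$, and feed in $a_n^*a_n \leq {\rm Re}\,(a_n)$ together with the weak* convergence $a_n \to s(x)$ (hence also $a_n^* \to s(x)$ and ${\rm Re}\,(a_n) \to s(x)$). The one thing to fix is your ``anticipated obstacle'', which is not an obstacle: you never need a lower bound on $\langle a_n^* a_n \zeta,\zeta\rangle$, because the one-sided inequality already gives
$$0 \;\leq\; \Vert (a_n - s(x))\zeta\Vert^2 \;=\; \langle (a_n^*a_n - 2\,{\rm Re}\,(a_n) + s(x))\zeta,\zeta\rangle \;\leq\; \langle (s(x) - {\rm Re}\,(a_n))\zeta,\zeta\rangle \;\to\; 0,$$
and the nonnegativity of the left-hand side is what closes the squeeze. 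This matters because the lower half of the trap you propose is not justified: neither ``$x^{\frac{1}{n}}\zeta \in s(x)H$'' nor the $\frac{1}{2}{\mathfrak F}$ inequality yields ${\rm Re}\,\langle x^{\frac{2}{n}}\zeta,\zeta\rangle \leq \Vert x^{\frac{1}{n}}\zeta\Vert^2$; that would be ${\rm Re}\,(T^2) \leq T^*T$, which is not among the facts you can invoke and fails for general operators (writing $T = A+iB$ with $A,B$ selfadjoint, the difference is $2B^2 + i[A,B]$, which need not be positive). Delete that step and your argument is complete and identical to the paper's.
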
 

  \begin{proof}    If $\zeta  \in H$, and $a_n =  x^{\frac{1}{n}}$ then $a_n \in \frac{1}{2} \, {\mathfrak F}_B$  
by \cite[Proposition 2.3]{BRI}.   Hence $a_n^* a_n \leq {\rm Re} \, (a_n)$, and 
$$\Vert (a_n - s(x)) \zeta \Vert^2 =  \langle   (a_n^* a_n  - 2 {\rm Re} \, (a_n) +  s(x)) \zeta ,  \zeta  \rangle
\leq   \langle (s(x)   - {\rm Re} \, (a_n) )  \zeta ,  \zeta  \rangle \to 0 ,$$
since $a_n$, and hence $a_n^*$ and ${\rm Re} \, (a_n)$, converges weak* to $s(x)$.
\end{proof}  
 
\begin{lemma} \label{rootf}   Let $A$ be an operator algebra, and $x \in A$. 
\begin{itemize} \item [(1)]   If the numerical range of $x$ is contained in a sector
$S_{\rho}$ for $\rho < \frac{\pi}{2}$ 
 (see notation above
Lemma {\rm \ref{roots}}), then  $x/\Vert {\rm Re}(x) \Vert
\in \frac{\sec^2 \rho}{2} \, {\mathfrak F}_A$. So $x  \in {\mathfrak c}_A$.
\item [(2)]   
If $x \in {\mathfrak r}_A$ then 
$x^\alpha \in {\mathfrak c}_A$ for any $\alpha \in (0,1)$.  \end{itemize}
In particular, the elements of $A$ which are sectorial of angle $< \frac{\pi}{2}$ are
a dense subcone of ${\mathfrak c}_A$.
\end{lemma}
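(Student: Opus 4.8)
The plan is to prove (1) first, as it is the substantive part, and then to read off (2) and the concluding ``in particular'' clause from (1) together with the facts about roots recalled just before the lemma.

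For (1), I would first unwind the conclusion. Writing $C = \frac{\sec^2 \rho}{2}$ and $m = \Vert {\rm Re}(x) \Vert$, the assertion $x/m \in C \, {\mathfrak F}_A$ says exactly $\Vert C \cdot 1 - x/m \Vert \leq C$; squaring (as in the computation in the proof of Lemma \ref{Font}), this is equivalent to $(x/m)^* (x/m) \leq 2C \, {\rm Re}(x/m)$, i.e.\ to the single operator inequality
$$ x^* x \; \leq \; \sec^2 \rho \; \Vert {\rm Re}(x) \Vert \; {\rm Re}(x). $$
Representing $A$ completely isometrically in $B(H)$, this becomes an inequality between Hilbert space operators, and the hypothesis becomes $W(x) \subset S_\rho$, where $W$ denotes the numerical range (the states-version being the closure of the $B(H)$-version, and $S_\rho$ being closed). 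The scalar case already pins down the constant: for $x = r e^{i \theta}$ with $|\theta| \leq \rho$ the inequality reduces to $\cos^2 \rho \leq \cos^2 \theta$, with equality at $\theta = \pm \rho$, so $\sec^2 \rho$ is sharp (e.g.\ $x = e^{i \rho} 1$ gives equality), and $x \in {\mathfrak c}_A$ will then follow from ${\mathfrak c}_A = \Rdb_+ {\mathfrak F}_A$.

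The main obstacle is that in the operator case ${\rm Re}(x)$ and ${\rm Im}(x)$ need not commute: expanding $x^* x = {\rm Re}(x)^2 + {\rm Im}(x)^2 + i [{\rm Re}(x), {\rm Im}(x)]$ leaves a commutator term that the sector condition does not control, since $W(x) \subset S_\rho$ constrains $\langle x \zeta, \zeta \rangle$ but not $\Vert x \zeta \Vert$ at a single vector $\zeta$. I would remove this difficulty by a compression to (at most) two dimensions. Fix a unit vector $\zeta$ and let $P$ be the orthogonal projection onto $V = {\rm span}\{ \zeta, x \zeta \}$. The compression $T = P x P|_V$ satisfies $W(T) \subset W(x) \subset S_\rho$ and ${\rm Re}(T) = P \, {\rm Re}(x) \, P |_V \leq m \, I_V$, while $T \zeta = x \zeta$ and $\langle {\rm Re}(T) \zeta, \zeta \rangle = \langle {\rm Re}(x) \zeta, \zeta \rangle$, because $\zeta, x\zeta \in V$. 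Hence it suffices to prove the inequality in $\dim V \leq 2$; and there it is a direct computation. In an orthonormal basis of $V$ with first vector $\zeta$ it reads $|\alpha|^2 + \beta^2 \leq \sec^2 \rho \; \Vert {\rm Re}(T) \Vert \; {\rm Re}(\alpha)$, where $\alpha = \langle x \zeta, \zeta \rangle$ and $\beta = \Vert x \zeta - \alpha \zeta \Vert$; one bounds the off-diagonal quantity $\beta$ and $|\alpha|$ against $\Vert {\rm Re}(T) \Vert$ using the two constraints $W(T) \subset S_\rho$ and $0 \leq {\rm Re}(T)$. Since enlarging the scalar from $\Vert {\rm Re}(T) \Vert$ to $m$ only weakens the inequality, this yields the displayed bound with $m = \Vert {\rm Re}(x) \Vert$.

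Given (1), the rest is quick. For (2), if $x \in {\mathfrak r}_A$ then $W(x) \subset S_{\pi/2}$, so by the fact recalled before Lemma \ref{roots} that $W(x^\alpha) \subset S_{\alpha \pi/2}$ for $\alpha \in (0,1)$, and by $x^\alpha \in {\rm oa}(x) \subset A$ (Lemma \ref{roots}(3)), the element $x^\alpha$ lies in $A$ with numerical range in a sector of angle $\alpha \frac{\pi}{2} < \frac{\pi}{2}$; now apply (1). Finally, the elements of $A$ that are sectorial of angle $< \frac{\pi}{2}$ lie in ${\mathfrak c}_A$ by (1), and they form a subcone: they are stable under multiplication by positive scalars, and under addition since $W(x+y) \subset W(x) + W(y)$ and $S_{\rho_1} + S_{\rho_2} \subset S_{\max(\rho_1, \rho_2)}$. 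For density in ${\mathfrak c}_A$, note ${\mathfrak c}_A \subset {\mathfrak r}_A$, so any $z \in {\mathfrak c}_A$ is accretive; then for $\alpha \in (0,1)$ the root $z^\alpha$ is sectorial of angle $\leq \alpha \frac{\pi}{2} < \frac{\pi}{2}$, and $z^\alpha \to z$ as $\alpha \to 1^-$ by the continuity in Lemma \ref{roots}(2). This exhibits $z$ as a limit of elements of the subcone, completing the proof.
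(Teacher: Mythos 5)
Your reduction of (1) to the single operator inequality $x^* x \leq \sec^2 \rho \, \Vert {\rm Re}(x) \Vert \, {\rm Re}(x)$ is correct, the compression to $V = {\rm span}\{\zeta, x\zeta\}$ is a valid reduction to the $2\times 2$ case, and your treatment of (2) and of the final density assertion agrees with the paper's. But there is a genuine gap at the heart of (1): the two-dimensional inequality is asserted to be ``a direct computation'' and is never carried out, and the strategy you indicate --- bounding $|\alpha|$ and $\beta$ separately against $\Vert {\rm Re}(T) \Vert$ --- cannot work as stated. Indeed the sector condition on the $(1,1)$ entry gives $|\alpha| \leq \sec \rho \, {\rm Re}(\alpha)$, whence $|\alpha|^2 \leq \sec^2\rho \, \Vert {\rm Re}(T)\Vert \, {\rm Re}(\alpha)$ already, and this is attained (take $T = e^{i\rho} 1$); so the $|\alpha|^2$ term alone can exhaust the entire right-hand side, and any contribution from $\beta^2$ must be extracted from the interaction between the constraints ${\rm Re}(T) \geq 0$ and $\pm {\rm Im}(T) \leq \tan\rho \, {\rm Re}(T)$ through their off-diagonal/determinant conditions. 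That interaction is exactly the commutator difficulty you correctly identified at the outset; it does not disappear in two dimensions (the commutator $[{\rm Re}(T), {\rm Im}(T)]$ is generically nonzero for $2\times 2$ matrices), so the compression has not actually removed the obstacle, only relocated it.

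The missing ingredient, which is what the paper's proof supplies, is the factorization ${\rm Im}(x) = a^{\frac{1}{2}} c \, a^{\frac{1}{2}}$ with $a = {\rm Re}(x) \geq 0$, $c = c^*$ and $\Vert c \Vert \leq \tan \rho$ (from the proof of Lemma 8.1 of \cite{BRI}; this is the operator-theoretic content of $-\tan\rho \, a \leq {\rm Im}(x) \leq \tan\rho \, a$). With it one writes $x = a^{\frac{1}{2}}(1+ic)a^{\frac{1}{2}}$, so $x^* x = a^{\frac{1}{2}} (1+ic)^* a (1+ic) a^{\frac{1}{2}} \leq \Vert (1+ic)^* a (1+ic) \Vert \, a$, and the $C^*$-identity gives $\Vert (1+ic)^* a (1+ic)\Vert = \Vert a^{\frac{1}{2}}(1+c^2)a^{\frac{1}{2}}\Vert \leq \Vert a \Vert (1+\tan^2\rho) = \Vert a \Vert \sec^2 \rho$. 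This is dimension-free, so the reduction to $\dim V \leq 2$ buys nothing: the cleanest way to finish your $2\times 2$ computation is this same factorization. If you want to keep your outline, you must either prove the factorization (or the equivalent generalized Cauchy--Schwarz inequality $|\langle T u, v\rangle| \leq \sec\rho \, \langle {\rm Re}(T) u,u\rangle^{\frac{1}{2}} \langle {\rm Re}(T) v,v\rangle^{\frac{1}{2}}$ for sectorial $T$, which also yields the claim directly in all dimensions), or actually exhibit the coupled $2\times 2$ estimate with the sharp constant $\sec^2\rho$ --- neither of which is routine enough to leave as a one-line remark.
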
  \begin{proof}     (1) \ Write $x = a + ib$,
for positive $a$ and selfadjoint $b$ in a containing $B(H)$.  By the argument in the proof
of \cite[Lemma 8.1]{BRI},  there exists a selfadjoint $c \in B(H)$
with $b = a^{\frac{1}{2}} c a^{\frac{1}{2}}$ and $\Vert c \Vert \leq \tan \rho$.
Then $x = a^{\frac{1}{2}} (1 + ic) a^{\frac{1}{2}}$, and 
$$x^* x = a^{\frac{1}{2}} (1 + ic)^* a (1 + ic) a^{\frac{1}{2}} \leq
C a.$$
By the $C^*$-identity $\Vert (1 + ic)^* a (1 + ic) \Vert$ equals
$$\Vert a^{\frac{1}{2}} (1 + ic) (1 + ic)^* a^{\frac{1}{2}}  \Vert
\leq \Vert a \Vert (1 + \Vert c \Vert^2) \leq
 \Vert a \Vert (1 + \tan^2 \rho) = \Vert a \Vert  \sec^2 \rho.$$ 
So we can take $C = \Vert a \Vert \, \sec^2 \rho$.  Saying that 
$x^* x \leq
C {\rm Re}(x)$ is the same as saying that $x \in \frac{C}{2} {\mathfrak F}_A$.

(2) \ This follows from (1) since in this case the numerical range of $x^\alpha$ is contained in 
a sector $S_{\rho}$ with
$\rho  < \frac{\pi}{2}$.  

The final assertion  follows from (1), and from the facts from the Introduction
 that $x = \lim_{t \to 1^-} \, x^t$
and that $x^t$ is sectorial of angle $< \frac{\pi}{2}$ if $0 < t < 1$.   \end{proof}  

{\bf Remark.}  The last result is related to the remark before \cite[Lemma 8.1]{BRI}.

\medskip

Of course $\Vert {\rm Im}(x^{\frac{1}{n}}) \Vert \to 0$ as $n \to \infty$, for $x \in {\mathfrak r}_A$ (as is clear e.g.\ from the above, or from the computation in the centered
line on the second page of our paper).

\begin{lemma} \label{vpow}  If $a \in {\mathfrak r}_A$ for an operator algebra $A$,
and $v$ is a partial isometry in any
 containing $C^*$-algebra $B$ with $v^* v = s(a)$, then
$v a v^* \in {\mathfrak r}_B$ and
$(v a v^*)^r = v a^r v^*$ if $r \in (0,1) \cup \Ndb$.
\end{lemma}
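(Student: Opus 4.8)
The plan is to record a single observation about conjugating a numerical range by a partial isometry, deduce part (1) and the sectoriality we will need, then prove the power identity first for integer exponents (purely algebraically), next for $r = 1/n$ by invoking uniqueness of roots, and finally for all $r \in (0,1)$ by passing through the rationals and using continuity.

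First I would isolate the following elementary fact, working in a faithful representation $B \subseteq B(H)$. Let $\Sigma \subseteq \Cdb$ be a closed set that is a cone (invariant under multiplication by nonnegative scalars), for instance the closed right half plane or a sector $S_\psi$. If $T \in B(H)$ has (vector-state) numerical range contained in $\Sigma$, then so does $vTv^*$: for a unit vector $\zeta$ we have $\langle vTv^* \zeta , \zeta \rangle = \langle T \eta , \eta \rangle$ with $\eta = v^* \zeta$ and $\|\eta\| \leq 1$, and since $\langle T \eta , \eta \rangle$ equals $\|\eta\|^2$ times a point of $\Sigma$ (or is $0$), it lies in $\Sigma$. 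Applying this with $\Sigma$ the closed right half plane gives $\mathrm{Re}\langle vav^*\zeta,\zeta\rangle \geq 0$, that is $vav^* + (vav^*)^* \geq 0$, so $vav^* \in {\mathfrak r}_B$, which is part (1). (Since the numerical range defined by states is the closure of the one by vector states, and $\Sigma$ is closed, nothing is lost in passing between them.)

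Next I would prove $(vav^*)^n = v a^n v^*$ for $n \in \Ndb$. The support projection $s(a)$ is the identity of $\mathrm{oa}(a)^{**}$, so $s(a)\,a^k = a^k s(a) = a^k$ for all $k$; together with $v^*v = s(a)$ a one-line induction gives the claim, since $(vav^*)^{n+1} = v a (v^*v) a^n v^* = v a\, s(a)\, a^n v^* = v a^{n+1} v^*$. I would also record here that $\mathrm{oa}(a^{1/n}) = \mathrm{oa}(a)$ (by Lemma \ref{roots}(3), since $a = (a^{1/n})^n$), so $s(a^{1/n}) = s(a) = v^*v$; hence the same induction applies to $a^{1/n}$ and yields $(v a^{1/n} v^*)^n = v (a^{1/n})^n v^* = v a v^*$.

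For $r = 1/n$ I would then appeal to uniqueness of roots. By the Introduction the numerical range of $a^{1/n}$ lies in $S_{\pi/(2n)}$, so the cone observation (now with $\Sigma = S_{\pi/(2n)}$) shows the numerical range of $v a^{1/n} v^*$ lies in $S_{\pi/(2n)}$ as well. Thus $v a^{1/n} v^*$ is an $n$th root of the accretive operator $vav^*$ with numerical range in $S_{\pi/(2n)}$, and by the uniqueness of such roots stated in the Introduction it must equal $(vav^*)^{1/n}$. Combining with the integer case, for rational $r = m/n \in (0,1)$ we get $(vav^*)^{m/n} = (v a^{1/n} v^*)^m = v a^{m/n} v^*$; and since both $r \mapsto (vav^*)^r$ and $r \mapsto v a^r v^*$ are continuous on $(0,1)$ by Lemma \ref{roots}(2) and agree on the rationals, they agree throughout $(0,1)$. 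I expect the only genuinely delicate point to be the sector verification in the $r=1/n$ step, i.e.\ confirming that conjugation by $v$ keeps the numerical range inside $S_{\pi/(2n)}$ so that the uniqueness of roots may be applied; the cone observation reduces this to a routine estimate.
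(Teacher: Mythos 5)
Your argument is correct, but it takes a genuinely different route from the paper's. The paper proves the fractional-power identity in one stroke via the Balakrishnan integral representation: it first establishes the resolvent identity $(t + vav^*)^{-1}vav^* = v(t+a)^{-1}av^*$ (using $v^*va = a$), and then substitutes this into
$$\langle T^r\zeta,\eta\rangle = \frac{\sin(r\pi)}{\pi}\int_0^\infty t^{r-1}\,\langle (t+T)^{-1}T\zeta,\eta\rangle\,dt$$
to obtain $(vav^*)^r = va^rv^*$ for all $r\in(0,1)$ simultaneously. You instead bootstrap from integer powers to $n$th roots --- using the uniqueness of the $n$th root with numerical range in $S_{\pi/(2n)}$, which your cone-conjugation observation makes applicable to $va^{1/n}v^*$ --- then to rationals, then to all of $(0,1)$ by the continuity in Lemma \ref{roots}\,(2) and density. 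Both arguments are sound. The paper's computation is shorter and needs no rational-approximation step; yours avoids the integral representation entirely, relying only on facts already recorded in the Introduction and Lemma \ref{roots}, and your observation that conjugation by a partial isometry preserves containment of the (vector-state, hence state) numerical range in any closed cone is a clean way to see the accretivity and sectoriality claims that the paper simply declares ``clear.'' The one point your reduction genuinely needs beyond the paper's proof --- that $s(a^{1/n}) = s(a)$, so the integer-power induction applies to $a^{1/n}$ --- is correctly justified as you state it, since ${\rm oa}(a^{1/n}) = {\rm oa}(a)$ and the support projection is the identity of ${\rm oa}(a)^{**}$.
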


\begin{proof}   This is clear if $r  = k \in \Ndb$.   It is also clear that $v a v^* \in {\mathfrak r}_B$.
We will use the Balakrishnan representation above to check that $(v a v^*)^r = v a^r v^*$ if $r \in (0,1)$ (it can also 
be deduced from the ${\mathfrak F}_A$ case in \cite{BNII}). 
Claim: $(t + v a v^*)^{-1} v a v^* = v (t + a)^{-1} a v^*$.    Indeed since $v^* v a = a$ we have 
$$(t + v a v^*) v (t + a)^{-1} a v^* 
= v (t+a) (t + a)^{-1} a v^* = v a v^*,$$
proving the Claim.  Hence for any $\zeta, \eta \in H$
we have
$$\langle (t + v a v^*)^{-1} v a v^* \zeta, \eta \rangle
= \langle  v (t + a)^{-1} a v^*  \zeta, \eta \rangle 
= \langle  (t + a)^{-1} a v^*  \zeta, v^* \eta \rangle.$$   
Hence by the Balakrishnan representation $\langle (v a v^*)^r \zeta, \eta \rangle$ equals
$$\frac{\sin(r \pi)}{\pi} \int_0^\infty \, t^{r - 1} \, \langle
 (t + vav^*)^{-1} v a v^* \zeta , \eta \rangle \, dt = 
\frac{\sin(r \pi)}{\pi} \int_0^\infty \, t^{r - 1} \, \langle  (t + a)^{-1} a v^*  \zeta, v^* \eta \rangle \, dt,$$
which equals $\langle v a^r v^* \zeta, \eta \rangle$, as desired.
\end{proof}

The last result generalizes \cite[Lemma 1.4]{BNI}.   With the last few results  in hand, and \cite[Lemma 3.6]{BOZ},   it appears that 
all of the results in \cite{BNI} stated in terms of  ${\mathfrak F}_A$ (or $\frac{1}{2}  {\mathfrak F}_A$ or ${\mathfrak c}_A$),
should generalize without problem to the ${\mathfrak r}_A$ case.  We admit that we have not yet carefully checked every part of every result in 
\cite{BNI}  
  for this though, but hope to
in forthcoming work.     

\subsection{Concavity, monotonicity, and operator inequalities}  The usual operator
concavity/convexity results for $C^*$-algebras seem to fail for the
${\mathfrak r}$-ordering.  That is, results of the type in
\cite[Proposition 1.3.11]{Ped} and its proof fail.
Indeed, functions like  Re$(z^{\frac{1}{2}}),
{\rm Re}(z (1+z)^{-1}), {\rm Re}(z^{-1})$
are not operator concave or convex,  even for
operators  $x, y \in \frac{1}{2} {\mathfrak F}_A$.  In
fact this fails even in the simplest case $A = \Cdb$,
taking $x = \frac{1}{2}, y = \frac{1+i}{2}$.  Similar remarks hold for `operator monotonicity'
with respect to the ${\mathfrak r}_A$-ordering for these functions.

For   the 
${\mathfrak r}$-ordering, one way one can often prove operator inequalities, or that something is increasing, is via the 
functional calculus, as follows.  

\begin{lemma} \label{sfex}  Suppose that $A$ is a unital operator algebra and $f, g$ are functions in the disk algebra,
 with ${\rm Re} \, (g-f) \, \geq \, 0$ on the closed unit disk.  Then $f(1-x) \preccurlyeq g(1-x)$ for $x \in {\mathfrak F}_A$.  
\end{lemma}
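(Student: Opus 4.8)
The plan is to reduce the statement to the characterization of ${\mathfrak r}_A$ by states. Write $h = g-f$; then $h$ lies in the disk algebra and satisfies ${\rm Re} \, h \geq 0$ on $\overline{\Ddb}$. Since the (disk algebra) functional calculus $p \mapsto p(1-x)$ is linear, we have $g(1-x) - f(1-x) = h(1-x)$, so the desired $f(1-x) \preccurlyeq g(1-x)$ is exactly the assertion $h(1-x) \in {\mathfrak r}_A$. As $A$ is unital we have $A^1 = A$, so by the characterization of ${\mathfrak r}_A$ recalled near the start of this section it suffices to prove that ${\rm Re} \, \varphi(h(1-x)) \geq 0$ for every state $\varphi$ of $A$. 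Note that $T := 1-x$ is a Hilbert space contraction, since $x \in {\mathfrak F}_A$ means precisely $\Vert 1-x \Vert \leq 1$.

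The key observation I would use is that von Neumann's inequality makes $\Lambda : p \mapsto p(T)$ a unital contractive homomorphism of the disk algebra $A(\Ddb)$ into $A$: indeed $\Vert p(T) \Vert \leq \Vert p \Vert_{\overline{\Ddb}}$ for polynomials $p$, and polynomials are dense in $A(\Ddb)$. Fixing a state $\varphi$ of $A$ and setting $\psi = \varphi \circ \Lambda$, we get a functional on $A(\Ddb)$ with $\psi(1) = 1$ and $\Vert \psi \Vert \leq 1$; that is, $\psi$ is a state of the uniform algebra $A(\Ddb) \subseteq C(\overline{\Ddb})$.

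Now I would invoke the standard description of states of a uniform algebra. Extending $\psi$ by Hahn--Banach to a norm-one functional on $C(\overline{\Ddb})$ that fixes the constant function $1$ produces a positive functional, hence a regular Borel probability measure $\nu$ on $\overline{\Ddb}$ with $\psi(p) = \int_{\overline{\Ddb}} p \, d\nu$. Consequently ${\rm Re} \, \varphi(h(T)) = {\rm Re} \, \psi(h) = \int_{\overline{\Ddb}} {\rm Re}(h) \, d\nu \geq 0$, because ${\rm Re}(h) \geq 0$ on $\overline{\Ddb}$ and $\nu \geq 0$. Since $\varphi$ was an arbitrary state, $h(1-x) \in {\mathfrak r}_A$, as required.

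Set up this way the computation is routine, so the only real difficulty is conceptual: recognizing that one should transport the problem to a state on the function algebra $A(\Ddb)$ and exploit the probability-measure representation, rather than attempting to estimate $h(1-x)$ directly. As a cross-check I would keep in mind an alternative route through the Herglotz representation $h(z) = i \, {\rm Im} \, h(0) + \int_{\Tdb} (w+z)(w-z)^{-1} \, d\mu(w)$ with $\mu \geq 0$, using the Cayley transform theory recalled before Lemma \ref{Font} to see that each $(w+T)(w-T)^{-1} = (1+w^{-1}T)(1-w^{-1}T)^{-1}$ lies in ${\mathfrak r}_A$ for a strict contraction $T$, and then integrating via the fact that ${\mathfrak r}_A$ is a closed cone. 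There the main obstacle would instead be justifying the interchange of the functional calculus with the Herglotz integral, which would force a preliminary reduction to strict contractions $T = r(1-x)$ with $r<1$, using the norm-continuity $h(rT) \to h(T)$ together with the closedness of ${\mathfrak r}_A$. I expect the state-based argument to be the cleaner of the two.
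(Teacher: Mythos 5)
Your argument is correct. The paper's own proof is essentially a one-line citation: it invokes \cite[Proposition 3.1, Chapter IV]{NF} applied to $g-f$, a dilation-theoretic fact about the Sz.-Nagy--Foias functional calculus asserting that a disk algebra function with nonnegative real part yields an accretive operator when evaluated at a contraction. What you do differently is prove that fact from scratch by a state argument: you use the characterization $a \in {\mathfrak r}_A$ iff ${\rm Re}\,\varphi(a) \geq 0$ for all states $\varphi$ of $A^1 = A$ (recalled at the start of Section 2), note that von Neumann's inequality makes $p \mapsto p(1-x)$ a unital contraction $A(\Ddb) \to A$, pull each state of $A$ back to a state of $A(\Ddb)$, and represent the latter by a probability measure on $\overline{\Ddb}$ via Hahn--Banach and Riesz. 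Every step is sound: $\Vert 1-x \Vert \leq 1$ is exactly the membership $x \in {\mathfrak F}_A$, a norm-one extension to $C(\overline{\Ddb})$ fixing $1$ is automatically positive, and the integral of ${\rm Re}(g-f) \geq 0$ against a positive measure is nonnegative. What your route buys is a self-contained, elementary proof that stays entirely within the order-theoretic framework of the paper (states and the dual description of ${\mathfrak r}_A$) and avoids importing the unitary dilation machinery behind the cited proposition; what the paper's route buys is brevity and a pointer to the more general $H^\infty$-calculus statement. Your Herglotz cross-check is also viable but, as you note, would require the extra reduction to strict contractions, so the state-based argument is indeed the cleaner choice.
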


\begin{proof}    Here e.g.\ $f(1-x)$  is the `disk algebra functional calculus', arising from von Neumann's inequality for the contraction $1-x$.   
The result follows by  \cite[Proposition 3.1, Chapter IV]{NF}  applied to $g-f$.  
\end{proof}

A good illustration of this principle is the proof at the end of
 \cite{BBS} that for any  $x \in \frac{1}{2} {\mathfrak F}_A$, the sequence
$({\rm  Re}(x^{\frac{1}{n}}))$ is increasing.   The last fact is another example of  $\frac{1}{2}   {\mathfrak F}_A$ behaving better than
${\mathfrak r}_A$:  for contractions $x \in {\mathfrak r}_A$, we do not in
general have $({\rm Re} (x^{1/m}))$ increasing  with $m$.
The matrix example 
$$\left[ \begin{array}{ccl} 1 & i \\ i & 0 \end{array} \right]$$
(communicated to us by Christian Le Merdy) will demonstrate this.    This example also shows
that one need not have $||x^{1/m}|| \leq ||x||^{1/m}$ for $x \in {\mathfrak r}_A$, so that one can 
have $x \in {\mathfrak r}_A \cap {\rm Ball}(A)$ but $||x^{1/m}|| > 1$.
  However one can show that
for any $x \in {\mathfrak r}_A$ there exists a constant $c > 0$ such that 
$({\rm Re} ((x/c)^{1/m}))_{m \geq 2}$ is  increasing  with $m$.   Indeed if 
$c = (2 \Vert {\rm Re}(x^{\frac{1}{2}}) \Vert)^2$, then by Lemma \ref{rootf} (2) we have 
$(x/c)^{\frac{1}{2}} \in \frac{1}{2} {\mathfrak F}_A$.
Thus ${\rm Re} ((x/c)^{t})$ increases as $t \searrow 0$ 
(see the proof of the \cite[Proposition 3.4]{BBS}),
from which the desired assertion  follows.

Finally, we clarify a few imprecisions in a couple 
 of the positivity results in \cite{BRI, BRII}.   At the end of Section 4
of \cite{BRII}, states on a nonunital algebra should probably also be assumed to have norm 1  (although the arguments there do not need this). 
 In \cite[Proposition 4.3]{BRI} we should have explicitly stated the hypothesis that $A$ is
approximately unital.  There are some small typo's in the proof of \cite[Theorem 2.12]{BRI}
but the reader should have no problem correcting these.

\section{Strictly real positive elements} \label{morp2} 

An element $x$ in $A$ with
${\rm Re}(\varphi(x)) > 0$ for all states on $A^1$
whose restriction 
to $A$ is nonzero, will be called 
{\em strictly real positive}.   Such $x$ are in $ {\mathfrak r}_A$.
This includes the $x \in A$ with Re$(x)$ strictly positive in some 
$C^*$-algebra generated by $A$.  
  If $A$ is approximately unital, then these conditions are in fact   equivalent, 
as the next result shows.
Thus the definition of  strictly real positive here generalizes the  definition 
given in \cite{BRI} for approximately unital operator algebras.

\begin{lemma} \label{secto}   Let $A$ be an approximately unital operator algebra, which 
generates a  $C^*$-algebra $C^*(A)$. 
An element $x \in A$ is strictly real positive in the sense above 
iff ${\rm Re}(x)$ is strictly positive in  
$C^*(A)$.   \end{lemma}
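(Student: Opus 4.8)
The plan is to set $B = C^*(A)$ and $h = {\rm Re}(x) = \tfrac12(x+x^*)$, and to run everything through the dictionary between states of $A^1$, states of $A$, and states of $B$ recalled in the Introduction, together with the standard fact that a positive element $h$ of a $C^*$-algebra $B$ is strictly positive iff $\psi(h) > 0$ for every state $\psi$ of $B$. First I would observe that strict real positivity already forces ${\rm Re}(\varphi(x)) \geq 0$ for \emph{every} state $\varphi$ of $A^1$ (a state killing $A$ gives $\varphi(x)=0$), so $x \in {\mathfrak r}_A$ and hence $h \in B_+$; this is what makes `strictly positive' meaningful for $h$ in the first place.

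For the forward direction I would take an arbitrary state $\psi$ of $B$. Since $A$ is approximately unital, $\psi$ restricts to a state $\psi|_A$ of $A$ (as recalled in the Introduction, using that a cai of $A$ is a cai of $B$), and this extends to a state $\varphi$ of $A^1$. Because $\psi$ is a state of a $C^*$-algebra we have $\psi(x^*) = \overline{\psi(x)}$, so $\psi(h) = {\rm Re}(\psi(x))$; and since $\varphi$ and $\psi$ agree on $A \ni x$ this equals ${\rm Re}(\varphi(x))$. As $\varphi|_A = \psi|_A$ is a state it is in particular nonzero, so strict real positivity yields $\psi(h) = {\rm Re}(\varphi(x)) > 0$. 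Since $\psi$ was an arbitrary state of $B$, the element $h$ is strictly positive in $B$.

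For the converse, suppose $h$ is strictly positive in $B$ and let $\varphi$ be any state of $A^1$ with $\varphi|_A \neq 0$. I would extend $\varphi$ to a state $\tilde{\varphi}$ of $C^*(A^1)$; then $\tilde{\varphi}|_B$ is a positive functional on $B$, and it is nonzero since it agrees with $\varphi$ on $A$. Writing $\tilde{\varphi}|_B = t\,\psi$ with $t = \Vert \tilde{\varphi}|_B \Vert > 0$ and $\psi$ a state of $B$, strict positivity of $h$ gives $\psi(h) > 0$, hence $\tilde{\varphi}(h) = t\,\psi(h) > 0$. Finally ${\rm Re}(\varphi(x)) = {\rm Re}(\tilde{\varphi}(x)) = \tilde{\varphi}(h) > 0$, using that $\tilde{\varphi}$ is a state of a $C^*$-algebra, so $x$ is strictly real positive.

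The only points that require any care are the identifications of states across $A$, $A^1$, $B$, and $C^*(A^1)$ — in particular that the restriction of a state of $B$ to $A$ is genuinely a state (which is exactly where approximate unitality enters) — and the elementary passage from a nonzero positive functional on $B$ to a positive scalar multiple of a state. Since all of these are assembled in the Introduction, I do not expect a serious obstacle; the substance of the lemma is simply the two-line computation $\psi(h) = {\rm Re}(\psi(x))$ valid for states of the $C^*$-algebra $B$, combined with the state correspondence.
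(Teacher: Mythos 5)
Your proposal is correct and follows essentially the same route as the paper: the forward direction restricts a state of $B=C^*(A)$ to a state of $A$ (using that a cai of $A$ is a cai of $B$) and extends it to $A^1$, while the converse extends a state of $A^1$ to the generated $C^*$-algebra and writes its restriction to $B$ as a positive multiple of a state. Your preliminary observation that strict real positivity forces $x\in{\mathfrak r}_A$, so that ${\rm Re}(x)\in B_+$, is also exactly the remark the paper makes just before the lemma.
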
  \begin{proof}  The one direction 
 follows because any state 
on $A^1$ whose restriction 
to $A$ is nonzero, extends to a state on $C^*(A)^1$ which is nonzero on 
$C^*(A)$.  The restriction to $C^*(A)$ of the latter state is a positive multiple of a state.

For the other direction 
recall that we showed in the introduction that  
any state on  $C^*(A)$ gives rise to a state on $A^1$.
Since any cai of $A$ is a cai of $C^*(A)$, the latter state cannot vanish on $A$.
  \end{proof}

{\bf Remark.}  Note that if ${\rm Re}(x) \geq \epsilon 1$ in $C^*(A)^1$, then there exists
a constant $C > 0$ with ${\rm Re}(x) \geq \epsilon 1 \geq C x^* x$,
and it follows that $x \in \Rdb_+ {\mathfrak F}_A$.  
Thus if $A$ is unital then every
strictly real positive in $A$ is 
 in $\Rdb_+ {\mathfrak F}_A$.   However this is false 
if $A$ is approximately unital (it is even easily seen to be 
false in the $C^*$-algebra  $A = c_0$).  Conversely, note that 
if $A$ is an approximately unital operator algebra with no 
r-ideals and no identity, then every nonzero element of 
$\Rdb_+ {\mathfrak F}_A$ is strictly real positive
by \cite{BRI} Theorem 4.1.

We also remark that it is tempting to define 
an element  $x \in A$ to be strictly real positive if Re$(x)$ strictly positive in some
$C^*$-algebra generated by $A$.  However this definition 
can depend on the particular generated $C^*$-algebra, unless one only uses
states on the latter that are not allowed to vanish on $A$ (in which
case it is equivalent to other definition).   As an example of this, consider the 
algebra of $2 \times 2$ matrices supported on the first row, and the various $C^*$-algebras it
can generate.

\medskip

We next discuss how results  in \cite{BRI}  generalize, particularly those 
related to strict real positivity if 
we use the definition at the start of the present section.
We recall that in \cite{BRI}, many `positivity' results were established for elements in ${\mathfrak F}_A$ or $\frac{1}{2} {\mathfrak F}_A$,
and by extension for the proper cone ${\mathfrak c}_A = 
\Rdb_+ {\mathfrak F}_A$.  In  \cite[Section 3]{BRII} we pointed out several of these facts
that generalized to the larger cone ${\mathfrak r}_A$, and indicated that some of this would be discussed in more detail in
\cite{BBS}.  In  \cite[Section 4]{BRII} we pointed out that the hypothesis in many of these results that 
$A$ be approximately unital could be simultaneously relaxed. 
In the next 
few paragraphs we give a few more details, that indicate the 
similarities and differences between these cones, particularly focusing on the results involving 
strictly real positive elements.    The following list should be added to the list in \cite[Section 3]{BRII},
and some complementary details are discussed in \cite{BBS}.  

In \cite[Lemma 2.9]{BRI} the ($\Leftarrow$) direction
is correct  for $x \in {\mathfrak r}_A$ with the same proof.   Also one need not assume there that 
$A$ is approximately unital, as we said towards the end of Section 4 in 
\cite{BRII}.    
The other direction is not true in general (not even in  $A = \ell^\infty_2$, see example in \cite{BBS}),
but there is a partial result, Lemma \ref{sect} below.

In \cite[Lemma 2.10]{BRI},
 (v) implies (iv) implies (iii) (or equivalently (i) or (ii)),  with ${\mathfrak r}_A$
in place of ${\mathfrak F}_A$, using the ${\mathfrak r}_A$ version above of the ($\Leftarrow$) direction of 
\cite[Lemma 2.9]{BRI}, and \cite[Theorem 3.2]{BRII} (which 
gives $s(x) = s({\mathfrak F}(x))$).
 However none of the other implications in that lemma are correct, even in $\ell^\infty_2$.

Proposition 2.11 and Theorem 2.19 of \cite{BRI} are correct in their ${\mathfrak r}_A$ variant, which should 
be phrased in terms of 
 strictly real positive elements in ${\mathfrak r}_A$ as defined above at the start of
the present section.  
Indeed this variant of Proposition 2.11 is true even for nonunital algebras 
if  in the proof we replace $C^*(A)$ by $A^1$.  
Theorem 2.19  of \cite{BRI} may be 
seen using the parts of \cite[Lemma 2.10]{BRI} which are true for ${\mathfrak r}_A$
in place of ${\mathfrak F}_A$, and \cite[Theorem 3.2]{BRII} (which 
gives $s(x) = s({\mathfrak F}(x))$).    Lemma 2.14 of \cite{BRI} 
is  clearly false even in $\Cdb$, however
it is true with essentially the same proof if the elements $x_k$ there
are strictly real positive elements, or more generally if they are in ${\mathfrak r}_A$ and their  numerical ranges in $A^1$ intersects the imaginary 
axis only possibly at $0$.  
 Also,  this 
does not effect the correctness of the important results that follow it in \cite[Section 2]{BRI}.  Indeed as  stated 
in \cite{BRII},  all descriptions of r-ideals and $\ell$-ideals and HSA's from \cite{BRI} are valid 
with ${\mathfrak r}_A$ in place of ${\mathfrak F}_A$, sometimes by using \cite[Corollaries 3.4 and 3.5]{BRII}).  We remark that Proposition 2.22 of  \cite{BRI} is clearly false
with ${\mathfrak F}_A$ replaced by ${\mathfrak r}_A$, even in $\Cdb$.
  
Similarly, in  \cite{BRI} Theorem 4.1, (c) implies (a) and (b) there with ${\mathfrak r}_A$
in place of ${\mathfrak F}_A$.   However the Volterra algebra \cite[Example 4.3]{BRI}
is an example where (a) in \cite{BRI} Theorem 4.1 holds but not (c) (note that 
the Volterra operator $V \in {\mathfrak r}_A$, but $V$ is not strictly real positive in 
$A$).   The results in Section 3 of  \cite{BRI}  were discussed in subsection
\ref{fgsect} and \cite{BOZ}.  It follows  
as in \cite{BRI} that if $x$ is a strictly real 
positive element (in our new sense above) in
a nonunital approximately unital operator algebra $A$, then $xA$ is never closed.
For if $xA$ is closed then by  the                                
${\mathfrak r}_A$ version of \cite[Lemma 2.10]{BRI} discussed above, we have
$xA = A$.  Now apply Corollary \ref{Aha4} to see that $A$ has a left identity
(which as we said in subsection \ref{fgsect} forces it to have an identity).

\begin{lemma} \label{sect}  In an operator algebra $A$, 
suppose that $x \in {\mathfrak r}_A$ and either $x$ is strictly real positive, or 
the numerical range $W(x)$ of $x$ in $A^1$ is contained in a sector 
$S_\psi$ of angle $\psi < \pi/2$ (see notation above Lemma {\rm \ref{roots}}).  If $\varphi$ is a state 
on $A$ or more generally on $A^1$, 
then $\varphi(s(x)) = 0$ iff 
$\varphi(x) = 0$.   \end{lemma}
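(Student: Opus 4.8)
The plan is to prove the two implications separately. The implication $\varphi(s(x)) = 0 \Rightarrow \varphi(x) = 0$ is easy and uses neither of the two hypotheses, whereas the converse is where strict real positivity and sectoriality enter, through two quite different arguments. Throughout I regard $\varphi$ as a state on $A^1$ (a state on $A$ extends to one on $A^1$, and since $s(x) \in A^{**}$ and $x \in A$ the two quantities $\varphi(s(x))$ and $\varphi(x)$ are unchanged by this extension), and I identify $\varphi$ with its canonical weak*-continuous extension to the bidual of a $C^*$-algebra $B$ generated by $A^1$, inside which both $s(x)$ and ${\rm Re}(x)$ live. Recall from the Introduction that $s(x)$ is the weak* limit of $(x^{1/n})$, is a projection, and is the identity of ${\rm oa}(x)^{**}$, so that $x = s(x)\,x = x\,s(x)$.

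For the direction $\varphi(s(x)) = 0 \Rightarrow \varphi(x) = 0$, I would simply apply the Cauchy--Schwarz inequality for the state $\varphi$: since $s(x)$ is a self-adjoint projection and $x = s(x)\,x$,
$$|\varphi(x)|^2 = |\varphi(s(x)\, x)|^2 \leq \varphi(s(x)^* s(x))\, \varphi(x^* x) = \varphi(s(x))\, \varphi(x^* x) = 0.$$
This needs only $x \in {\mathfrak r}_A$.

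Now suppose $\varphi(x) = 0$. If $x$ is strictly real positive, then ${\rm Re}(\varphi(x)) = 0$, so by the very definition of strict real positivity $\varphi$ must vanish on $A$ (any state on $A^1$ not vanishing on $A$ has ${\rm Re}(\varphi(x)) > 0$). By Lemma \ref{roots}(3) each $x^{1/n}$ lies in ${\rm oa}(x) \subseteq A$, so $\varphi(x^{1/n}) = 0$ for every $n$; since $x^{1/n} \to s(x)$ weak* and $\varphi$ is weak*-continuous on the bidual, $\varphi(s(x)) = \lim_n \varphi(x^{1/n}) = 0$. If instead $x$ is sectorial of angle $\psi < \pi/2$, then Lemma \ref{rootf}(1) gives $x \in {\mathfrak c}_A$, so $x^* x \leq C\, {\rm Re}(x)$ in $B$ for some $C > 0$. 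From $\varphi(x) = 0$ we get $\varphi(x^*) = \overline{\varphi(x)} = 0$, hence $\varphi({\rm Re}(x)) = 0$, and then $0 \leq \varphi(x^* x) \leq C\, \varphi({\rm Re}(x)) = 0$. The crucial step is the identity $s(x) = s({\rm Re}(x))$ in $B^{**}$, the right-hand side being the range projection of the positive element ${\rm Re}(x)$. Writing $p = s({\rm Re}(x))$, the inequality $x^* x \leq C\,{\rm Re}(x)$ yields $p^\perp x^* x\, p^\perp \leq C\, p^\perp {\rm Re}(x)\, p^\perp = 0$, so $x p^\perp = 0$, i.e. $x p = x$; passing to powers gives $x^k p = x^k$, hence $y p = y$ for all $y \in {\rm oa}(x)$, and taking the weak* limit of $x^{1/n} p = x^{1/n}$ gives $s(x) p = s(x)$, i.e. $s(x) \leq p$. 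Conversely $s(x)\,{\rm Re}(x) = {\rm Re}(x)$ (using $s(x) x = x$ and $s(x) x^* = x^*$), so $p \leq s(x)$, giving $s(x) = p$. Finally I reduce to a standard $C^*$-fact: for $h = {\rm Re}(x) \in B_+$ with $\varphi(h) = 0$ one has $\varphi(s(h)) = 0$, since in the GNS representation $\varphi(h) = \|h^{1/2}\Omega\|^2 = 0$ forces $\Omega \in \ker h$, whence the range projection $s(h)$ kills $\Omega$ and $\varphi(s(h)) = \|s(h)\Omega\|^2 = 0$.

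I expect the main obstacle to be the identity $s(x) = s({\rm Re}(x))$ in the sectorial case; the remaining ingredients are the Cauchy--Schwarz estimate and routine normality/weak*-continuity arguments. If this identity is already recorded in \cite{BRII} or \cite{BBS} I would cite it; otherwise I would include the short argument above. One bookkeeping point deserves care: when $\varphi$ is a state on $A$ and $x$ is strictly real positive, $\varphi$ does not vanish on $A$, so ${\rm Re}(\varphi(x)) > 0$ and neither $\varphi(x)$ nor (by the easy direction) $\varphi(s(x))$ can be zero, so the equivalence holds vacuously there; the genuine content of the strictly real positive case concerns states on $A^1$ that annihilate $A$.
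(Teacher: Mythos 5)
Your proof is correct, and its overall architecture (Cauchy--Schwarz for the easy direction, near-triviality of the strictly real positive case, and reduction of the sectorial case to $\varphi(x^*x)=0$) matches the paper's; the one place you genuinely diverge is in how the sectorial case is finished. The paper passes to the GNS representation $\varphi = \langle\pi(\cdot)\xi,\xi\rangle$ of $C^*(A^1)$, notes that $W(\pi(x))$ lies in a sector of the same angle, and invokes Lemma 5.3 in Chapter IV of \cite{NF} to get $\Vert\pi(x)\xi\Vert^2=\varphi(x^*x)=0$ directly from $\varphi(x)=0$, then concludes $\varphi(s(x))=0$ as in the proof of \cite[Lemma 2.9]{BRI} (namely $\pi(x)\xi=0$ forces $\pi(y)\xi=0$ for all $y\in{\rm oa}(x)$, hence $\varphi(x^{\frac{1}{n}})=0$ and, in the weak* limit, $\varphi(s(x))=0$). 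You instead use Lemma \ref{rootf} (1) to place $x$ in ${\mathfrak c}_A$, so that $x^*x\leq C\,{\rm Re}(x)$, deduce $\varphi(x^*x)=0$ from $\varphi({\rm Re}(x))=0$, and then route the conclusion through the identity $s(x)=s({\rm Re}(x))$ together with the standard fact that a state annihilating a positive element annihilates its range projection. Your verification of $s(x)=s({\rm Re}(x))$ is complete and correct, and your route has the advantage of being self-contained relative to the paper (Lemma \ref{rootf} (1) is proved there), at the cost of some extra range-projection bookkeeping; the paper's argument is shorter but leans on the external sectorial lemma from \cite{NF}. Note that once you have $\varphi(x^*x)=0$ you could finish exactly as the paper does and skip the $s(x)=s({\rm Re}(x))$ detour entirely. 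Your closing observation that the strictly real positive case is vacuous for states not vanishing on $A$, with the real content coming from states on $A^1$ that annihilate $A$, is precisely what the paper's parenthetical ``(but non-vacuous in the $A^1$ case)'' is pointing at.
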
  \begin{proof}   The one direction is as in 
\cite[Lemma 2.9]{BRI}  as mentioned above.  The strictly real positive case of the 
other direction is obvious (but non-vacuous in the $A^1$ case).  In the remaining case, 
 write $\varphi = \langle \pi(\cdot) \xi , \xi \rangle$
for a unital $*$-representation  $\pi$ of $C^*(A^1)$ on a Hilbert space $H$,
and a unit vector $\xi \in H$.
Then $W(\pi(x))$ is contained in a sector of the same angle.
By Lemma 5.3 in Chapter IV of \cite{NF} we have $\Vert \pi(x) \xi \Vert^2
= \varphi(x^* x) = 0$.  As e.g.\ in the proof of 
\cite[Lemma 2.9]{BRI} this gives $\varphi(s(x)) = 0$.
  \end{proof}

\begin{corollary} \label{sx}  Let $x \in {\mathfrak r}_A$ for  an operator algebra $A$.  If  $\varphi(x^{\frac{1}{n}}) = 0$
for some $n \in \Ndb, n \geq 2$, and state  $\varphi$ on $A$,
then $\varphi(s(x)) = 0$ and $\varphi(x^{\frac{1}{m}}) = 0$
for all $m \in \Ndb$.  Thus if $\varphi(s(x)) \neq 0$ for a state  $\varphi$ on $A$, 
then ${\rm Re}(\varphi(x^{\frac{1}{n}})) > 0$ 
for all $n \in \Ndb, n \geq 2$.
   \end{corollary}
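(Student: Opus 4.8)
The plan is to reduce everything to Lemma \ref{sect} applied to the roots $x^{\frac{1}{m}}$, exploiting two structural facts. By the discussion in the Introduction the numerical range of $x^{\frac{1}{m}}$ in $A^1$ lies in $S_{\pi/(2m)}$, so for $m \geq 2$ it lies in $S_{\pi/4}$, a sector of angle $< \frac{\pi}{2}$; hence Lemma \ref{sect} is available for each $y = x^{\frac{1}{m}}$ with $m \geq 2$. Second, all the roots share the same support projection, $s(x^{\frac{1}{m}}) = s(x)$: indeed $s(x^{\frac{1}{m}})$ is the weak* limit of $((x^{\frac{1}{m}})^{\frac{1}{k}})_k = (x^{\frac{1}{mk}})_k$, which equals $s(x)$ since $x^{\frac{1}{j}} \to s(x)$ weak*.

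For the main implication, suppose $\varphi(x^{\frac{1}{n}}) = 0$ for a state $\varphi$ on $A$ and some $n \geq 2$. The ``iff'' of Lemma \ref{sect} applied to $y = x^{\frac{1}{n}}$ gives $\varphi(s(x^{\frac{1}{n}})) = 0$, that is $\varphi(s(x)) = 0$. Running the equivalence backwards, for each $m \geq 2$ we have $\varphi(s(x^{\frac{1}{m}})) = \varphi(s(x)) = 0$, so Lemma \ref{sect} forces $\varphi(x^{\frac{1}{m}}) = 0$. The only remaining case is $m = 1$: for $t \in (0,1)$ the root $x^t$ again has numerical range in a sector of angle $< \frac{\pi}{2}$ with $s(x^t) = s(x)$, so the same argument gives $\varphi(x^t) = 0$; letting $t \to 1^-$ and using the norm-continuity of $t \mapsto x^t$ from Lemma \ref{roots}(2) (and $x = \lim_{t \to 1^-} x^t$) yields $\varphi(x) = 0$. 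This proves the first assertion.

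The ``Thus'' statement is the contrapositive together with a real-part refinement. If $\varphi(s(x)) \neq 0$, then by the first assertion $\varphi(x^{\frac{1}{n}}) \neq 0$ for every $n \geq 2$. To upgrade this to strict positivity of the real part, note that $\varphi$ extends to a state $\tilde{\varphi}$ on $A^1$, so $\varphi(x^{\frac{1}{n}}) = \tilde{\varphi}(x^{\frac{1}{n}})$ lies in $W(x^{\frac{1}{n}}) \subseteq S_{\pi/(2n)}$; since $x^{\frac{1}{n}} \in {\mathfrak r}_A$ we already have ${\rm Re} \, \varphi(x^{\frac{1}{n}}) \geq 0$. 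Were this real part zero, the point $\varphi(x^{\frac{1}{n}})$ would be a point of the sector $S_{\pi/(2n)}$ (with $\pi/(2n) < \frac{\pi}{2}$) on the imaginary axis, hence equal to $0$; but then the first assertion would give $\varphi(s(x)) = 0$, a contradiction. Therefore ${\rm Re} \, \varphi(x^{\frac{1}{n}}) > 0$.

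I do not anticipate a serious obstacle, as the heavy lifting is done by Lemma \ref{sect}. The one point requiring care is the case $m = 1$ of the first assertion: it is not directly covered by Lemma \ref{sect}, since $x$ itself need only be accretive and its numerical range may meet the imaginary axis, so it must instead be obtained by the limiting argument through the sectorial roots $x^t$, $t < 1$.
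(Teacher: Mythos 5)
Your argument is correct and is essentially the paper's own proof: both apply Lemma \ref{sect} to $x^{\frac{1}{n}}$ (whose numerical range lies in a sector of angle $<\pi/2$ when $n\geq 2$) to obtain $\varphi(s(x))=0$, use $s(x^{\frac{1}{m}})=s(x)$ to propagate this to the other roots, and get the final ``${\rm Re}(\varphi(x^{\frac{1}{n}}))>0$'' assertion from the observation that the only point of a sector of angle $<\pi/2$ with vanishing real part is $0$. The one place you diverge is the $m=1$ case: the paper deduces $\varphi(x)=0$ directly from $\varphi(s(x))=0$ via the direction of Lemma \ref{sect} that is valid for \emph{every} element of ${\mathfrak r}_A$ with no sector hypothesis (the ``$(\Leftarrow)$'' direction of \cite[Lemma 2.9]{BRI}, essentially Cauchy--Schwarz against the support projection, since $s(x)x=x$), whereas your limiting argument through $x^t$ as $t\to 1^-$ also works but quietly uses $s(x^t)=s(x)$ for arbitrary $t\in(0,1)$, which you only justified for $t=\frac{1}{m}$; this is true (e.g.\ because ${\rm oa}(x^t)={\rm oa}(x)$, as $x=(x^t)^{1/t}$ and both roots lie in the algebra generated by the other element by Lemma \ref{roots}(3)), but it deserves a word, and the whole detour is avoidable.
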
  \begin{proof}
It is clear that $s(x) = s(x^{\frac{1}{m}})$ for all $m \in \Ndb$, by using for example the fact from
\cite[Section 3]{BRII} that $x^{\frac{1}{n}} \to s(x)$ weak*.  
Since the numerical range of $x^{\frac{1}{n}}$ 
in $A^1$ is contained in a sector
centered on the positive real axis of angle $< \pi$,
$\varphi(s(x)) = \varphi(s(x^{\frac{1}{n}})) = 0$ by Lemma \ref{sect}.
As we said above, this implies that 
$\varphi(x)  = 0$, and the same argument applies with
$x$ replaced by $x^{\frac{1}{m}}$ to give $\varphi(x^{\frac{1}{m}}) = 0$.   

The last statement follows from this, since ${\rm Re}(\varphi(x^{\frac{1}{n}})) > 0$
is equivalent to $\varphi(x^{\frac{1}{n}}) \neq 0$ if $n \geq 2$.  \end{proof}

{\bf Remark.}   Examining the proofs of the last three results show that they are 
valid if states on $A$ are replaced by  nonzero functionals that extend to states on 
$A^1$, or equivalently extend to  a $C^*$-algebra generated by $A^1$.

\begin{corollary} \label{sx3}  
In an operator algebra $A$,
if $x \in {\mathfrak r}_A$ and $x$ is strictly real positive,
then $x^{\frac{1}{n}}$ is strictly real positive for all $n \in \Ndb$. 
 \end{corollary}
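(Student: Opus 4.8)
The plan is to unwind the definition of strict real positivity and reduce everything to Lemma \ref{sect} together with the final statement of Corollary \ref{sx}. By definition $x^{\frac{1}{n}}$ is strictly real positive precisely when ${\rm Re}(\varphi(x^{\frac{1}{n}})) > 0$ for every state $\varphi$ on $A^1$ whose restriction to $A$ is nonzero; so I would fix one such $\varphi$ and an $n \geq 2$ (the case $n = 1$ being the hypothesis itself) and try to show ${\rm Re}(\varphi(x^{\frac{1}{n}})) > 0$. Note first that $x^{\frac{1}{n}} \in {\rm oa}(x) \subset A$ by Lemma \ref{roots}(3), and $x^{\frac{1}{n}} \in {\mathfrak r}_A$ since its numerical range lies in $S_{\frac{\pi}{2n}} \subset S_{\frac{\pi}{2}}$, so the conclusion at least makes sense.

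The key chain of implications I would run is the following: strict real positivity of $x$ gives ${\rm Re}(\varphi(x)) > 0$, hence $\varphi(x) \neq 0$; then Lemma \ref{sect}, applied in its strictly-real-positive case to $x \in {\mathfrak r}_A$, yields $\varphi(s(x)) \neq 0$; and finally the last statement of Corollary \ref{sx} converts $\varphi(s(x)) \neq 0$ into ${\rm Re}(\varphi(x^{\frac{1}{n}})) > 0$ for all $n \geq 2$. Since $\varphi$ was an arbitrary state on $A^1$ not vanishing on $A$, this is exactly the assertion that $x^{\frac{1}{n}}$ is strictly real positive.

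The one point requiring care --- and really the only obstacle --- is that the definition of strict real positivity quantifies over states of the \emph{unitization} $A^1$, whereas Corollary \ref{sx} is phrased for states on $A$. Lemma \ref{sect} is already stated for states ``on $A$ or more generally on $A^1$'', so it applies directly; for Corollary \ref{sx} I would invoke the Remark immediately following it, which records that its proof is valid when states on $A$ are replaced by nonzero functionals extending to states on $A^1$. With that observation in hand the argument is immediate, and no further computation is needed.
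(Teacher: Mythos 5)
Your argument is correct and is essentially the paper's own proof run in the contrapositive direction: the paper assumes $x^{\frac{1}{n}}$ is not strictly real positive, deduces $\varphi(x^{\frac{1}{n}})=0$ for some state $\varphi$ of $A^1$ nonzero on $A$, and then invokes Corollary \ref{sx} (via the Remark extending it to such functionals) to get $\varphi(x)=0$, contradicting strict real positivity of $x$. Your direct chain through Lemma \ref{sect} and the final statement of Corollary \ref{sx} uses exactly the same ingredients, and you correctly identify the only delicate point, namely passing between states on $A$ and on $A^1$ via the Remark after Corollary \ref{sx}.
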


\begin{proof}
If $x^{\frac{1}{n}}$ is not strictly real positive for some $n \geq 2$,
then $\varphi(x^{\frac{1}{n}}) = 0$ for some state 
$\varphi$ of $A^1$ which is nonzero on $A$.  Such a state extends to a state 
on $C^*(A^1)$.  By the last Remark,  $\varphi(x)  = 0$ by Corollary \ref{sx}, a contradiction.
\end{proof}

We recall that a {\em $\sigma$-compact} projection in $B^{**}$ 
for a $C^*$-algebra $B$,
is an open projection $p \in B^{**}$ which is 
the supremum (or weak* limit)
of an increasing sequence in $B_+$ \cite{PedS}.
It is well known from
$C^*$-algebra theory that 
this is equivalent to saying that $p$ is the support projection
of a closed right ideal  in $B$ which has a countable left cai;
and also 
 equivalent to saying that $p$ is the support projection of a strictly 
positive element in the hereditary subalgebra defined by $p$.    

\begin{lemma} \label{cssigc}  If $A$ is a closed subalgebra of a $C^*$-algebra $B$, and if $p$ is 
an open projection in $A^{**}$ then the following are equivalent:
\begin{itemize} \item [(i)] $p$ is the support projection of
a closed right ideal  in $A$
 with a countable left cai.
\item [(ii)]  $p$ is $\sigma$-compact  in $B^{**}$ in the sense above.
\item [(iii)]  $p$ is the support projection of
a closed right ideal  in $A$ of the form 
$\overline{xA}$ for some $x \in {\mathfrak r}_A$.  That is, $p = s(x)$ 
for some $x \in {\mathfrak r}_A$. 
\item [(iv)]  There is a
sequence $x_n \in {\mathfrak r}_A$ with $x_n = p x_n  \to p$ weak*. 
\item [(v)] $p$ is the support projection of a strictly real  
positive element $x$ of the hereditary subalgebra defined by $p$.
\end{itemize} 
If these hold then the sequence $(x_n)$
in {\rm (iv)} can be chosen to be increasing with respect to $\preccurlyeq$,
and they, and the element $x$ in {\rm (iii)} and {\rm (v)}, can be chosen to be
in $\frac{1}{2} {\mathfrak F}_A$
and nearly positive.
  \end{lemma}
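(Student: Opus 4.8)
The plan is to treat (i), (iii), (iv), (v) as conditions intrinsic to $A$, prove their mutual equivalence cheaply, and reach the external $C^*$-condition (ii) by pushing down to $B$ in one direction and pulling back up in the other; concretely I would run the cycle (i)$\Rightarrow$(ii)$\Rightarrow$(iv)$\Rightarrow$(iii)$\Rightarrow$(i), together with (iii)$\Leftrightarrow$(v). The implication (v)$\Rightarrow$(iii) is immediate, since strictly real positive elements lie in ${\mathfrak r}_A$. For (iii)$\Rightarrow$(v), given $x\in{\mathfrak r}_A$ with $s(x)=p$, one notes $px=xp=x$, so $x$ sits in the HSA $D$ determined by $p$, where $s(x)=p=1_{D^{**}}$; an element of ${\mathfrak r}_D$ with full support is strictly real positive in $D$ by the approximately-unital characterization of \cite[Theorem 4.1]{BRI} together with Lemma \ref{secto}. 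For (iii)$\Rightarrow$(i) I would take $J=\overline{xA}$, whose left cai is the normalization of $(x^{\frac{1}{n}})$ from the Introduction, a countable left cai. For (iii)$\Rightarrow$(iv) simply set $x_n=x^{\frac{1}{n}}$: these lie in ${\mathfrak r}_A$, satisfy $px_n=x_n$ (as $s(x^{\frac{1}{n}})=s(x)=p$), and converge weak* to $p$.

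The implication (i)$\Rightarrow$(ii) I would obtain by pushing the countable left cai $(e_n)$ of the $r$-ideal $J$ (support $p$) down to $B$: the closed right ideal $K=\overline{JB}$ has the same left support $p$ and retains $(e_n)$ as a countable left cai, so $p$ is $\sigma$-compact in $B^{**}$ by the $C^*$-facts recalled just before the lemma. The converse (iv)$\Rightarrow$(iii) is the natural place to manufacture a single generator: normalizing so $\Vert x_n\Vert\le 1$, put $x=\sum_n 2^{-n}x_n\in{\mathfrak r}_A$ and let $q=\bigvee_n s(x_n)$. Then $qx=x$ forces $s(x)\le q$, while the support/order properties of \cite{BRI,BRII} give $s(x_n)\le s(x)$, hence $q\le s(x)$ and $s(x)=q$; finally $s(x_n)\le p$ (because $px_n=x_n$) yields $q\le p$, and $p=qp=q$, since $qx_n=x_n\to p$ weak* forces $qp=p$ while $q\le p$ forces $qp=q$. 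Thus $s(x)=p$.

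The hard step, and the main obstacle, is (ii)$\Rightarrow$(iv): transferring the countability witnessed in the $C^*$-algebra $B$ back into the operator algebra $A$. I would first reduce to $B=C^*(A)$, and let $D$ be the HSA in $A$ determined by $p$, with a cai $(e_t)\subseteq \frac{1}{2}{\mathfrak F}_D$ (Read's theorem); by HSA theory \cite{BHN}, $C^*(D)$ is the hereditary $C^*$-subalgebra $D_B$ of $B$ supported at $p$, and by \cite[2.1.6]{BLM} $(e_t)$ is a cai for $D_B$ as well. Writing $\sigma$-compactness as $p=\sup_n c_n$ with $c_n\in (D_B)_+$ increasing, I would choose $t_n$ so that $\Vert e_{t_n}c_k-c_k\Vert$ and $\Vert c_k e_{t_n}-c_k\Vert$ are $<1/n$ for all $k\le n$. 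Taking real parts and using $0\le \mathrm{Re}(e_{t_n})\le p$ together with $c_k\ge 0$, one gets $\mathrm{Re}(e_{t_n})\,c_k\to c_k$ in norm for each fixed $k$; hence any weak* cluster point $g$ of $(\mathrm{Re}(e_{t_n}))$ satisfies $0\le g\le p$ and $gc_k=c_k$ for all $k$, so $gp=p$ and therefore $g=p$. Thus $\mathrm{Re}(e_{t_n})\to p$ weak*, which forces $e_{t_n}\to p$ weak* (in the von Neumann algebra $pB^{**}p$ an element of norm $\le 1$ with real part $p$ must equal $p$). Since $e_{t_n}\in{\mathfrak r}_A$ and $pe_{t_n}=e_{t_n}$, this is exactly (iv). The delicate points are the identification $C^*(D)=D_B$ and the weak* extraction, both of which are Pedersen-type $\sigma$-compactness arguments (cf.\ \cite{Ped,PedS}).

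Finally, for the refinement I would exploit that once some $x\in{\mathfrak r}_A$ has $s(x)=p$, a high root $x^{\frac{1}{m}}$ lies in ${\mathfrak c}_A$ by Lemma \ref{rootf}(2), is sectorial of tiny angle (hence nearly positive) for large $m$, and has the same support $p$; scaling it into $\frac{1}{2}{\mathfrak F}_A$ preserves the angle (and the support), so the $x$ in (iii) and (v) can be taken in $\frac{1}{2}{\mathfrak F}_A$ and nearly positive. For the sequence in (iv), I would take $x_n=x^{\frac{1}{n}}$ with this $x\in\frac{1}{2}{\mathfrak F}_A$: each $x^{\frac{1}{n}}$ stays in $\frac{1}{2}{\mathfrak F}_A$ by \cite[Proposition 2.3]{BRI}, the sequence is $\preccurlyeq$-increasing by \cite[Proposition 4.7]{BBS}, and it is nearly positive on its tail, which (still satisfying $px_n=x_n\to p$ weak*) may be used in place of the whole sequence. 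This yields all three strengthenings simultaneously.
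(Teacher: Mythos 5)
Your overall architecture is close to the paper's (which runs (i)$\Leftrightarrow$(iii), (iii)$\Rightarrow$(iv)$\Rightarrow$(iii), (iii)$\Leftrightarrow$(v), and (i)$\Leftrightarrow$(ii) via $I=JB$), and most of your individual steps are workable. But the justification you give for (iii)$\Rightarrow$(v) rests on a false principle. You claim that an element of ${\mathfrak r}_D$ with full support $s(x)=1_{D^{**}}$ is automatically strictly real positive in $D$. Take $D=A=\ell^\infty_2$ and $x=(i,-i)$: then $x+x^*=0$ so $x\in{\mathfrak r}_A$, and $x^{1/n}=(e^{i\pi/2n},e^{-i\pi/2n})\to (1,1)$ weak*, so $s(x)=1_{A^{**}}$; yet ${\rm Re}\,\varphi(x)=0$ for \emph{every} state $\varphi$ of $A$, so $x$ is not strictly real positive. (The Volterra operator in ${\rm oa}(V)$ is another instance, and Section 3 of the paper explicitly flags exactly this failure.) Moreover the result you cite, \cite[Theorem 4.1]{BRI}, is the one whose relevant direction the paper says does \emph{not} survive the passage from ${\mathfrak F}_A$ to ${\mathfrak r}_A$ --- only (c)$\Rightarrow$(a),(b) of that theorem persists, and you need the converse. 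The equivalence ``full support $\Leftrightarrow$ strictly real positive'' is an ${\mathfrak F}_A$ phenomenon (\cite[Theorem 2.19]{BRI}); for ${\mathfrak r}_A$ only ``strictly real positive $\Rightarrow$ full support'' holds (which is why your (v)$\Rightarrow$(iii) is fine). To repair (iii)$\Rightarrow$(v) you must first upgrade the witness: pass from (iii) to (i), then use \cite[Proposition 2.14]{BRI} to replace $x$ by an element of $\frac{1}{2}{\mathfrak F}_A$ with the same support $p$ (this is the content of the first paragraph of the paper's proof); for such an element $D=\overline{xAx}=\overline{xD}$ and the ${\mathfrak F}_A$-version of \cite[Theorem 2.19]{BRI} applies.

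The rest differs from the paper mainly in how (ii) is linked back to the intrinsic conditions. The paper proves (ii)$\Rightarrow$(i) in two lines: given a countable left cai $(f_n)$ for $I=pB^{**}\cap B=JB$, pick terms $e_n$ of a (possibly uncountable) left cai of $J$ with $\Vert e_nf_n-f_n\Vert<2^{-n}$ and expand $e_na=e_n(a-f_na)+(e_nf_n-f_n)a+f_na$. Your route (ii)$\Rightarrow$(iv), extracting a weak*-convergent subsequence from a cai of the HSA $D$ against the increasing sequence $c_n$, is a legitimate Pedersen-style alternative, though you should only assert that $(e_t)$ is a cai for $\overline{DBD}=pB^{**}p\cap B$ (which is all you use), not that this hereditary $C^*$-subalgebra equals $C^*(D)$. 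Your (iv)$\Rightarrow$(iii) via $x=\sum_n2^{-n}x_n$ is the same device as \cite[Proposition 2.14]{BRI} and \cite[Corollary 3.5]{BRII}, which the paper simply cites; the inequality $s(x_n)\preccurlyeq s(x)$ you need is exactly the content of those results rather than a generic ``order property,'' so it deserves the explicit citation. The closing refinements (high roots, rescaling into $\frac{1}{2}{\mathfrak F}_A$, monotonicity from \cite[Proposition 4.7]{BBS}) match the paper's.
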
  \begin{proof} We know from the theory
in \cite{BRI,BRII} that  (i) and (iii) are equivalent,
and the element $x$ in {\rm (iii)} can be chosen to be
in $\frac{1}{2} {\mathfrak F}_A$ 
and nearly positive.  Indeed one direction is similar to the argument in
the paragraph after Corollary \ref{pr}.   That these imply
(iv) is similar, clearly $x_n = x^{\frac{1}{n}}$ has the desired properties
for $n$ large enough.    

(iv) $\Rightarrow$ (iii) \ If $x_n \in {\mathfrak r}_A$ with $x_n = p x_n  \to p$ weak*, then $p$ is the support projection of
the closed right ideal  $J = \{ a \in A :
p a = a \}$.  Indeed it is easy to see that $J^{\perp \perp} = p A^{**}$.  Note that $\overline{\sum_k \, x_n A}$ is a left ideal in $A$ and $J$,
but actually equals $J$ since its weak* closure contains $p$ and hence contains
$p A^{**} = J^{\perp \perp}$.   By \cite[Proposition 2.14]{BRI} and \cite[Corollary 3.5]{BRII},  $\overline{\sum_k \, x_n A}
 =  \overline{xA}$ for some $x \in {\mathfrak r}_A$.

(iii) $\Rightarrow$ (v) \ 
Suppose that $D$ is the hereditary subalgebra defined by $p$.
If $x$ is as in (iii) then $x \in D = \overline{xAx}$, and 
$$\overline{xD} \subset D \subset \overline{x^{\frac{1}{2}} x^{\frac{1}{2}}
Ax} \subset \overline{x^{\frac{1}{2}} D}  \subset \overline{xA D} 
\subset \overline{x D} .$$
So $D = \overline{xD}$, and 
by our version of \cite[Theorem 2.19]{BRI} discussed earlier in this section, $x$ is a strictly real positive element of $D$. 

(v)  $\Rightarrow$ (iii)  \ If $x \in {\mathfrak r}_D \subset {\mathfrak r}_A$ is a strictly real positive element of $D$
then by our version of \cite[Theorem 2.19]{BRI}  
  $p = w^*\lim_n \, x^{\frac{1}{n}} = s(x)$. 
 
Finally, the equivalence of (i) and (ii): Let $I =  p B^{**} \cap B$ and 
$J = p A^{**} \cap A$.  As we said,
(ii) is equivalent to 
$I$ having a countable left cai. 
 From \cite[Section 2]{BHN} we have $I = JB$.
So if $J$ has a countable left cai then so does $I$.
Similarly, any  left cai for $J$ is a left cai for $I$.  
If $I$ has a countable left cai $(f_n)$ choose elements $e_n$ from 
the left cai in the last line such that $\Vert e_n f_n - f_n \Vert <
2^{-n}$.  Then since 
$$e_n a = e_n (a - f_n a) + (e_n f_n - f_n)a + f_n a, \qquad a \in A , $$
it is clear that $J$ has a countable left cai.      
\end{proof}  

A similar result holds for left ideals or HSA's.

If $A$ is an operator algebra then
an open projection $p \in A^{**}$ will
be said to be {\em $\sigma$-compact} with respect to $A$ if
it satisfies the equivalent conditions in the previous result.
These projections, and the above lemma, will be used in our
`strict Urysohn lemma' in Section 4.

\section{Positivity in the Urysohn lemma and peak interpolation}  \label{pury}  

In our previous work \cite{BHN,BRI, BNII, BRII} we had two main settings for
 noncommutative Urysohn lemmata for a subalgebra $A$ of a $C^*$-algebra $B$.  In both
settings we have a compact projection $q \in A^{**}$, dominated
by an open projection $u$ in $B^{**}$, and we seek to find $a \in {\rm Ball}(A)$ with
$aq = q a = q$, and both $a \, u^{\perp}$ and $u^{\perp} \, a$ either small or zero.
In the first setting $u \in A^{**}$ too, whereas this is not required
in the second setting.  We now ask if in both settings one may also
have $a \in
\frac{1}{2} {\mathfrak F}_A$ and nearly positive (hence
`positive' in our new sense, and as close as
we like to a positive operator in the usual sense).
In the first setting, all works perfectly:

\begin{theorem} \label{urysII}  Let $A$ be an
operator algebra (not necessarily approximately unital), and let
$q \in A^{**}$ be a compact projection, which
is dominated by an open projection  $u \in A^{**}$.  Then
there exists nearly positive
 $a \in \frac{1}{2} {\mathfrak F}_A$ with $a q = qa = q$,
and $a u = ua = a$.   
\end{theorem}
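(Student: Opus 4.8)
The plan is to reduce the statement to a peaking problem inside the hereditary subalgebra determined by $u$, produce a peaking element that already lies in $\frac{1}{2}{\mathfrak F}$, and then pass to roots to gain near positivity. Let $D$ denote the HSA in $A$ associated with the open projection $u$, so that $D$ is approximately unital with $1_{D^{**}} = u$ and $D^{\perp \perp} = u A^{**} u$. For any $a \in D$ one automatically has $au = ua = a$, so the requirement $au = ua = a$ is met for free once the interpolating element is chosen in $D$; the whole problem thus becomes: find nearly positive $a \in \frac{1}{2}{\mathfrak F}_D$ with $aq = qa = q$. Since membership in $\frac{1}{2}{\mathfrak F}$ is the representation-independent condition $a^* a \leq {\rm Re}(a)$, we have $\frac{1}{2}{\mathfrak F}_D = D \cap \frac{1}{2}{\mathfrak F}_A$, so such an $a$ automatically lies in $\frac{1}{2}{\mathfrak F}_A$. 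To launch this reduction I would first record that $q$ is a compact projection in $D^{**}$: our earlier (non-positive) first-setting Urysohn lemma (e.g.\ \cite{BNII}) furnishes $a_0 \in {\rm Ball}(A)$ with $a_0 q = q a_0 = q$ and $a_0 u = u a_0 = a_0$, and the second pair of identities places $a_0 \in {\rm Ball}(D)$, which together with the fact that $q$ remains closed in $D^{**}$ is exactly compactness of $q$ in the approximately unital algebra $D$.

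The crucial step is then to manufacture the peaking element in $\frac{1}{2}{\mathfrak F}_D$. Since $q$ is compact in $D^{**}$, the characterization of compact projections as infima of peak projections (\cite{BNII}) shows that $q \leq u(y)$ for some norm-one $y \in \frac{1}{2}{\mathfrak F}_D$, where $u(y) = \lim_n y^n$ (weak* limit) is the associated peak projection, which exists and is nonzero. Here I would use the peak-projection identities $y\, u(y) = u(y)\, y = u(y)$ together with $u(y)\, q = q\, u(y) = q$ (which is just $q \leq u(y)$) to conclude $yq = y\, u(y)\, q = u(y)\, q = q$, and symmetrically $qy = q$.

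Finally I would positivize by taking roots. For $a = y^{1/n}$ one has $a \in \frac{1}{2}{\mathfrak F}_D$ by \cite[Proposition 2.3]{BRI}, and $a \in {\rm oa}(y) \subseteq D$; moreover $u(a) = u(y^{1/n}) = u(y) \geq q$, so the same identities give $aq = qa = q$. The numerical range of $a$ lies in the sector $S_{\pi/(2n)}$, whence ${\rm Re}(a) \geq 0$ and $\|a - {\rm Re}(a)\| = \|{\rm Im}(a)\| \leq \|a\| \sin(\pi/(2n)) \to 0$; so for $n$ large, $a$ is within $\epsilon$ of the positive operator ${\rm Re}(a)$ while remaining in $\frac{1}{2}{\mathfrak F}_A$, i.e.\ $a$ is nearly positive. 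This $a$ satisfies all four required conditions.

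I expect the main obstacle to be precisely the crucial step, namely producing a peaking element already inside $\frac{1}{2}{\mathfrak F}$. The naive route of applying a disk-algebra functional calculus $a = f(a_0)$ to the contraction $a_0$ fails: forcing $a \in D$ requires the constant term $f(0)$ to vanish, but a nonconstant holomorphic $f$ with $f(0) = 0$ cannot map $\overline{\Ddb}$ into the closed disk $\{ |z - \tfrac{1}{2}| \leq \tfrac{1}{2} \}$ (which is what von Neumann's inequality would need in order to place $a$ in $\frac{1}{2}{\mathfrak F}$), since $0$ is a boundary point of that disk while the open mapping theorem forces $f(0)$ into its interior. Routing instead through the peak-projection description of compactness bypasses this tension entirely, after which near positivity is the routine matter of taking roots.
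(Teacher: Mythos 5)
Your argument is correct, and the second half (passing to $n$th roots to gain near positivity while preserving $aq=qa=q$ and $au=ua=a$) is essentially what the paper does; the difference is in the first half. The paper simply invokes the \emph{proofs} of \cite[Theorem 2.6]{BNII} and \cite[Theorem 6.6 (2)]{BRII} to produce the element of $\frac{1}{2}{\mathfrak F}_A$ supported in $u$ and fixing $q$, and then checks persistence under roots via the binomial series for $a^{1/n}$ and the fact that $u$ is an identity multiplier on ${\rm oa}(a)$. You instead reconstruct that first step explicitly: reduce to the HSA $D$ supported by $u$ (using $\frac{1}{2}{\mathfrak F}_D = D \cap \frac{1}{2}{\mathfrak F}_A$, which your $a^*a \leq {\rm Re}(a)$ criterion correctly justifies), observe $q$ is compact relative to $D$, and pull a single peaking element $y \in \frac{1}{2}{\mathfrak F}_D$ with $q \leq u(y)$ from the characterization of compact projections as infima of peak projections; then $yq = y\,u(y)\,q = q$. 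This buys a more self-contained and transparent derivation, and your closing remark about why a disk-algebra functional calculus $f(a_0)$ with $f(0)=0$ cannot land in $\frac{1}{2}{\mathfrak F}$ is a nice explanation of why the peak-projection route is needed. The one step you assert rather than prove is that $q$ ``remains closed in $D^{**}$'' (equivalently, compact with respect to $D$): this is true but needs the fact from \cite{BNII} that compactness of a projection in $A^{\perp\perp}$ may be tested in a containing $C^*$-algebra, where the hereditary localization $q \leq u$ $\Rightarrow$ $q$ compact in $uB^{**}u \cap B$ is standard Akemann theory; your $a_0 \in {\rm Ball}(D)$ with $a_0q = qa_0 = q$ supplies only half of the definition of compactness. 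With that citation supplied, the proof is complete.
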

 \begin{proof}  The proofs of \cite[Theorem 2.6]{BNII} and \cite[Theorem 6.6 (2)]{BRII}
show that this all can be done with $a \in \frac{1}{2} {\mathfrak F}_A$.
 Then $a^{\frac{1}{n}} q = q a^{\frac{1}{n}}  = q$, as is clear for example
using the power series form $a^{\frac{1}{n}} = \sum_{k=0}^\infty
\, { 1/n \choose k} \, (-1)^k (1-a)^k$ from \cite[Section 2]{BRI},
where it is also shown that $a^{\frac{1}{n}} \in \frac{1}{2} {\mathfrak F}_A$.
Similarly $a^{\frac{1}{n}} u = u a^{\frac{1}{n}} = a^{\frac{1}{n}}$, since $u$ is
the identity multiplier on oa$(a)$, and
oa$(a)$ contains these roots \cite[Section 2]{BRI}.
That the numerical range of $a^{\frac{1}{n}}$ lies in
a  cigar centered on the line segment $[0,1]$ in the
$x$-axis, of height $< \epsilon$
is as in the proof of \cite[Theorem 2.4]{BRI}. 
\end{proof}

We now turn to the second
setting  (see e.g.\ \cite[Theorem 6.6 (1)]{BRII}), where the dominating open projection
$u$ is not required
to be in $A^{\perp \perp}$.
Of course if $A$ has no identity or cai then one cannot expect
the `interpolating' element $a$ to be in $\frac{1}{2} {\mathfrak F}_A$ or
${\mathfrak r}_A$.  This  may  be seen clearly in the case that $A$ is the functions in the disk algebra vanishing
at $0$.  Here $\frac{1}{2} {\mathfrak F}_A$ and 
${\mathfrak r}_A$ are $(0)$.  Indeed by the maximum modulus theorem for harmonic functions there are  
no nonconstant 
functions in this algebra which have nonnegative real part.
The remaining question is the approximately unital case `with positivity'.
We solve this next, also solving the questions posed at the end of \cite{BNII}.

\begin{theorem} \label{urys}  Let $A$ be an approximately unital
subalgebra of a  $C^*$-algebra $B$, and let
$q \in A^{\perp \perp}$ be a compact projection.
\begin{itemize}
\item [(1)]  If $q$ is dominated by an open projection  $u \in B^{**}$
then for any $\epsilon > 0$,
there exists an $a \in 
\frac{1}{2} {\mathfrak F}_A$ with $a q = qa =  q$,
and $\Vert a (1-u) \Vert < \epsilon$ and $\Vert (1-u)  a \Vert < \epsilon$.
Indeed this can be done with in addition $a$ nearly positive
(thus the numerical range (and spectrum) of $a$ within
a horizontal cigar centered on the line segment $[0,1]$ 
in the
$x$-axis, of height $< \epsilon$).  
\item [(2)]  $q$ is a weak* limit of a net $(y_t)$ of nearly
positive elements in $\frac{1}{2}
\, {\mathfrak F}_A$ 
with $y_t q = q y_t = q$.
\end{itemize} \end{theorem}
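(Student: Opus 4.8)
The plan is to deduce (2) from part (1) by a weak* approximation argument, using convexity to reduce the problem to one functional at a time. Let $K$ denote the set of nearly positive $y \in \frac{1}{2}{\mathfrak F}_A$ (for a fixed tolerance) satisfying $yq = qy = q$. This set is convex: $\frac{1}{2}{\mathfrak F}_A$ is convex, the conditions $yq=q$ and $qy=q$ are linear, and near positivity with a fixed angle and distance is preserved under convex combinations (a convex combination of operators with numerical range in a fixed sector again has numerical range in that sector, and a convex combination of operators each within $\epsilon$ of a positive operator is again within $\epsilon$ of a positive operator). By the bipolar theorem it therefore suffices to show, for each $\omega \in A^*$ and each $\delta > 0$, that there is $y \in K$ with $\mathrm{Re}\,\omega(q-y) < \delta$; letting the tolerance and $\delta$ run then yields the desired net $(y_t)$ of nearly positive elements converging weak* to $q$. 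Extending $\omega$ to $B$ and using the Jordan decomposition of functionals on a $C^*$-algebra, we may assume $\omega$ is a state.

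The crux is to find, for this fixed state $\omega$, an open projection $u \in B^{**}$ with $q \le u$ and $\omega(u-q) < \delta^2$, i.e. to approximate the compact projection $q$ from above by an open projection close to $q$ in $\omega$-measure. I would obtain this from the noncommutative topology of $q$: by the description recalled in the Introduction, $q$ is an infimum of peak projections, and since peak projections are closed under finite infima this infimum is realized as a decreasing net, so $\omega(r) \searrow \omega(q)$ along a decreasing net of peak projections $r \ge q$. Each such peak projection $r = u(c)$ is in turn the decreasing weak* limit of the open spectral projections $\chi_{(1-t,1]}(c)$ as $t \searrow 0$, so $\omega$ of these open projections decreases to $\omega(r)$. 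Composing the two approximations produces an open $u \ge q$ with $\omega(u-q)$ as small as we wish.

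Given such $u$, I apply part (1) with this $u$ and a small $\epsilon>0$ to obtain a nearly positive $y \in \frac{1}{2}{\mathfrak F}_A$ with $yq=qy=q$ and $\Vert y(1-u)\Vert < \epsilon$; then $y \in K$. Using $yq = q$ together with $1-q = (u-q)+(1-u)$ we have
\[ q - y = -\,y(1-q) = -\,y(u-q) - y(1-u). \]
For the state $\omega$ the second term is controlled in norm, $|\omega(y(1-u))| \le \Vert y(1-u)\Vert < \epsilon$, while the first is controlled by Cauchy--Schwarz: since $u-q$ is a projection and $\Vert y\Vert \le 1$,
\[ |\omega(y(u-q))| \le \omega(yy^*)^{1/2}\,\omega(u-q)^{1/2} \le \omega(u-q)^{1/2} < \delta. \]
Taking $\epsilon < \delta$ gives $\mathrm{Re}\,\omega(q-y) \le |\omega(q-y)| < 2\delta$, which, after relabelling $\delta$, is exactly what the bipolar reduction requires.

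The main obstacle is the middle step: producing the single open projection $u \ge q$ that is close to $q$ against the given state $\omega$. The subtlety is that, unlike in the commutative case, the meet of two open projections in $B^{**}$ need not be open, so the open projections dominating $q$ are not directed and one cannot simply pass to their infimum; this is precisely why I route the approximation through peak projections (which are closed under finite meets) and why I reduce to one functional at a time via the bipolar theorem rather than attempting to build a single decreasing net of open projections directly. Once $u$ is in hand the remaining estimates are routine, and the near positivity of the terms $y_t$ is inherited directly from part (1).
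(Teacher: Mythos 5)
Your deduction of (2) from (1) is, modulo repairs, workable: the convexity of the set $K$, the Hahn--Banach/bipolar reduction to a single functional, the Cauchy--Schwarz bound $|\omega(y(u-q))|\leq \omega(u-q)^{1/2}$, and the outer approximation of $q$ by open projections in $\omega$-measure are all sound ideas. (Two local repairs are needed: the phrase ``open spectral projections $\chi_{(1-t,1]}(c)$'' is not meaningful for a non-normal peak element $c$ --- you should work with $c^*c$, noting $u(c)^*u(c)=\chi_{\{1\}}(c^*c)$, or argue directly from outer regularity of the closed projection $q$ against normal states using $\chi_{[0,\epsilon)}(b_t)$ for an increasing net $b_t\nearrow 1-q$ in $B_+$; and after the Jordan decomposition you must choose the single open $u\geq q$ small against all four positive pieces simultaneously, e.g.\ against their sum.)

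The genuine gap is structural. You prove nothing about part (1), and your proof of (2) presupposes it; but in the approximately unital non-unital case the only available proof of (1) runs in exactly the opposite direction, so your argument is circular as a proof of the theorem. The paper establishes (2) first and directly: it writes $q$ as a decreasing limit of peak projections $u(x_s)$ with $x_s\in\frac{1}{2}{\mathfrak F}_A$ (\cite[Theorem 3.4]{BNII}), reduces to the case $q=u(x)$ minimal in ${\rm oa}(x)$, invokes the unital case of (2) in $A^1$ to produce a net $(z_t)\subset\frac{1}{2}{\mathfrak F}_{A^1}$ fixing $q$, and then forms $y_t=z_t^{1/2}x^{1/2}$, which lies in $\frac{1}{2}{\mathfrak F}_A$ and converges strongly to $q$ by the H\"older-type root estimate $\Vert(z_t^{1/2}-q)\zeta\Vert\leq K\Vert(z_t-q)\zeta\Vert^{1/2}$. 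Part (1) is then obtained by substituting this net into the proof of \cite[Theorem 2.1]{BNII}; only the unital case of (1), namely \cite[Theorem 2.24]{BRI}, is available independently of (2). To make your route non-circular you would have to supply an independent proof of (1) for merely approximately unital $A$, which is precisely the hard content of the theorem --- the key device you are missing is the multiplication by $x^{1/2}$ (for a peak element $x$ of $q$ lying in $A$) that pushes the unital solution from $A^1$ down into $A$ while preserving membership in $\frac{1}{2}{\mathfrak F}$ and the peaking behaviour at $q$.
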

 \begin{proof}    (2) \  First assume that $q = u(x)$ (this 
was defined in the Introduction)
for some $x \in \frac{1}{2} {\mathfrak F}_A$.
We may replace $A$ by the commutative algebra ${\rm oa}(x)$, and then $q$ is a minimal projection,
 since $q \, p(x) \in \Cdb q$ for any polynomial $p$.  
Now $q$ is closed and compact in $(A^{1})^{**}$, so by the unital case of (2), which follows from 
\cite[Theorem 2.24]{BRI} and the closing remarks to \cite{BNII}, there is a net 
$(z_t) \in \frac{1}{2}
\, {\mathfrak F}_{A^1}$ with $z_t q = q z_t = q$ and $z_t \to q$ weak*.  Let 
$y_t  = z_t^{\frac{1}{2}} x^{\frac{1}{2}}$.  By  
 \cite[Lemma 4.2 (3)]{BBS}, 
 we have $y_t 
\in \frac{1}{2} {\mathfrak F}_{A^1}
\cap A = \frac{1}{2} {\mathfrak F}_{A}$.  Also, $x^{\frac{1}{2}} q = q x^{\frac{1}{2}} = q$  by considerations
used in the last proof, and similarly $z_t^{\frac{1}{2}} q = q z_t^{\frac{1}{2}} = q$.
Thus $y_t^{\frac{1}{2}} q = q y_t^{\frac{1}{2}} = q$.   If $A$ is represented nondegenerately on a Hilbert
space $H$, and we identify $1_{A^1}$ with $I_H$, then
 for any $\zeta \in H$ we have by a result at the end of the Introduction  
that 
$$\Vert (y_t - q) \zeta \Vert
= \Vert (z_t^{\frac{1}{2}} - q) x^{\frac{1}{2}} \zeta \Vert
\leq K \Vert (z_t - q) x^{\frac{1}{2}} \zeta \Vert^{\frac{1}{2}} \to 0 .$$
Thus $y_t \to q$ strongly and hence weak*. 

Next, for an arbitrary compact projection $q \in  A^{\perp \perp}$, by \cite[Theorem 3.4]{BNII}
there exists a net $x_s \in \frac{1}{2} {\mathfrak F}_A$  with $u(x_s) \searrow q$.
By the last paragraph there exist nets $y_t^s \in \frac{1}{2} \, {\mathfrak F}_A$
with $y_t^s \, u(x_s) = u(x_s) \, y_t^s = u(x_s)$, and $y_t^s \to u(x_s)$ weak*.
Then $$y_t^s q = y_t^s \, u(x_s) \, q = u(x_s) \, q = q,$$ for each $t, s$.  It is clear that
the $y_t^s$ can be arranged into a net weak* convergent to $q$.

(1) \ If $A$ is unital then the first assertion of (1) 
is \cite[Theorem 2.24]{BRI}.   In the approximately unital case,
by the ideas in the closing remarks to \cite{BNII},
the first assertion of (1) should be equivalent to (2).   Indeed, by 
substituting such a net $(y_t)$ into
the proof of \cite[Theorem 2.1]{BNII} one obtains the
first assertion of (1).

Finally, we obtain the `cigar' assertion.
For $(y_t)$ as in (2), similarly to the last paragraph
we substitute the net $(y_t^{\frac{1}{m}})$ into
the proof of \cite[Theorem 2.1]{BNII}.  Here $m$ is a fixed integer so large that the
numerical range of $y_t^{\frac{1}{m}}$ lies within the appropriate  horizontal cigar.
As in the proof of the previous theorem, 
$y_t^{\frac{1}{m}} q = q y_t^{\frac{1}{m}} = q$ and
$y_t^{\frac{1}{m}} \to q$ weak* with $t$ since if $\zeta \in H$ again 
then  
$$\Vert (y_t^{\frac{1}{m}} - q) \zeta \Vert
\leq \Vert (y_t - q) \zeta \Vert^{\frac{1}{m}} \to 0$$ 
by the inequality 
at the end of the Introduction. 
  \end{proof}

{\bf Remark.}    
The  recent paper  \cite{CGK} 
contains a special kind of  `Urysohn lemma with positivity' for function algebras.  It seems that our Urysohn lemma applied to a function algebra 
has much weaker (fewer) hypotheses, and has stronger conclusions 
except that our interpolating element has range in the usual thin
cigar in the right half plane which we like to use, and this is 
contained in their Stolz region which 
contains $0$ as an interior point, except for a
tiny region just to the left of $1$.  
Hopefully our results  could be helpful
in such applications.

\bigskip

We now turn to our analogue of the `strict Urysohn lemma'.   We recall that the classical form of the
strict Urysohn lemma in topology finds a positive continuous function  which is $0$ and $1$ on the two
given closed sets, and which is   strictly between $0$ and $1$ outside of these two sets.   The latter is
essentially equivalent to saying that there is a positive contraction $f$ in the algebra 
such that the  given closed sets are peak sets for $f$ and $1-f$.   With this in mind we 
 state some preliminaries related 
to peak projections, the noncommutative generalization of peak sets. There are many equivalent definitions of peak projections (see e.g.\ \cite{Hay,BHN,BNII,BRII}), but basically  they are the 
 closed projections $q$ for which there is a contraction $x$ with $xq = q x = q$ (so $x$ `equals one on' $q$), 
and $|x| < 1$ in some sense (which is made precise in the above references) 
on $q^\perp$; we write $q = u(x)$.   More generally, if $B$ is a $C^*$-algebra and $x \in {\rm Ball}(B)$ one can define
$u(x) = w^*\lim_n \, x (x^* x)^n$, which always exists in $B^{**}$ and is a partial isometry
(see e.g.\ \cite{ERut} and \cite[Lemma 3.1]{BNII}).   When this is a nonzero projection it is
a peak projection and equals $w^*\lim_n \, x^n$, and we say that $x$ {\em peaks at} this projection.  This is the case for example for any norm $1$  element 
of $\frac{1}{2} {\mathfrak F}_A$ (see \cite[Corollary 3.3]{BNII}), for any closed subalgebra  $A$ of $B$, 
and here the peak projection $u(x)$ is in$ A^{**}$.  However it need not be the case for any norm $1$  real positive element, 
even in a unital $C^*$-algebra.  For example if  $V$ is the Volterra 
operator, which is accretive, and $x = \frac{1}{\Vert V + \epsilon I \Vert} (V + \epsilon I)$, then one can show that 
 $w^*\lim_n \, x^n = 0$.     The following is 
implicit in \cite[Lemma 3.1]{BNII}: 

\begin{lemma} \label{chux}   Suppose that $B$ 
is a $C^*$-algebra, that $x \in {\rm Ball}(B)$ and that $q \in B^{**}$ is a closed projection with $qx = q$.
Then $x$ peaks at $q$ (that is, $q$ is a peak projection and equals $u(x)$) iff $\varphi(x^* x) < 1$ for every state $\varphi$ of $B$ with $\varphi(q) = 0$.
\end{lemma}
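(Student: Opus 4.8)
The plan is to reduce the peaking condition to a single spectral identity and then read off each implication from the behaviour of states. Set $p = w^*\lim_n (x^*x)^n \in B^{**}$, the spectral projection of $x^*x$ for the value $1$; this limit exists since $0 \le x^*x \le 1$ makes $(x^*x)^n$ a decreasing net of positives. Because multiplication in the von Neumann algebra $B^{**}$ is separately weak* continuous, $u(x) = w^*\lim_n x(x^*x)^n = xp$. I would first record two consequences of $qx = q$ together with $\Vert x \Vert \le 1$: taking adjoints gives $x^*q = q$, and then for $\eta$ in the range of $q$ one has $\Vert x\eta - \eta \Vert^2 = \Vert x\eta \Vert^2 - \Vert \eta \Vert^2 \le 0$, which upgrades this to $xq = qx = q$. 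Hence $x^*xq = x^*(xq) = x^*q = q$, so $(x^*x)^n q = q$ for all $n$, giving $pq = q$, that is $q \le p$.

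Next I claim $x$ peaks at $q$ iff $p = q$. Indeed, if $u(x) = q$ then $q = xp$, and the projection property yields $q = q^*q = p\,x^*x\,p = p$, using $x^*xp = p$; conversely if $p = q$ then $u(x) = xp = xq = q$, a nonzero projection, so $x$ peaks at $q$. This converts the lemma into: $p = q$ iff $\varphi(x^*x) < 1$ for every state $\varphi$ of $B$ with $\varphi(q) = 0$, where $\varphi(q)$ is evaluated via the canonical (weak* continuous, hence normal) extension of $\varphi$ to $B^{**}$.

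For the forward direction, assume $p = q$ and suppose, for contradiction, that some state $\varphi$ with $\varphi(q) = 0$ has $\varphi(x^*x) = 1$ (note $\varphi(x^*x) \le 1$ always). In the GNS representation $(\pi, \mathcal H, \xi)$ of $\varphi$, the equality $\Vert \pi(x)\xi \Vert = \Vert \xi \Vert$ with $\Vert \pi(x) \Vert \le 1$ forces $\pi(x^*x)\xi = \xi$, so $\varphi((x^*x)^n) = 1$ for all $n$; by normality of the extension, $\varphi(p) = \lim_n \varphi((x^*x)^n) = 1$, and $p = q$ then gives $\varphi(q) = 1$, a contradiction.

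The main obstacle is the converse, which I would prove contrapositively. If $p \ne q$ then, since $q \le p$, the element $p' = p - q$ is a nonzero projection with $p' \le q^\perp$ and $x^*xp' = p'$. The delicate step is to manufacture a genuine state of $B$ detecting $p'$: I would take a normal state $\psi$ on $B^{**}$ with $\psi(p') = 1$ and set $\varphi = \psi|_B$. The key point, which is exactly what makes the bidual evaluation $\varphi(q) = 0$ come out correctly, is that a normal functional on $B^{**}$ is precisely the canonical extension of its restriction to $B$, so the canonical extension of $\varphi$ is $\psi$ itself. Since $\psi(p') = 1$ gives $\psi(a) = \psi(p'ap')$, we get $\varphi(q) = \psi(p'qp') = 0$ (as $p'q = 0$) and $\varphi(x^*x) = \psi(p'x^*xp') = \psi(p') = 1$, contradicting the hypothesis. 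Recognising that this normal-state-on-the-bidual construction is the right vehicle, and that it automatically kills $q$ in the bidual sense while saturating $x^*x$, is the crux of the whole argument.
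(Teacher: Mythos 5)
Your proof is correct, and it takes a genuinely different (and more self-contained) route than the paper. The paper disposes of the lemma in two lines by citing \cite[Lemma 3.1]{BNII}: the ($\Leftarrow$) direction is quoted directly, and for ($\Rightarrow$) the paper notes that condition (3) of that lemma holds, so that $\varphi(q)=0$ gives $\varphi(1-q)=1$, whence $\varphi(x^*xq)=0$ by Cauchy--Schwarz and $\varphi(x^*x)<1$. You instead prove everything from scratch: you introduce the spectral projection $p = w^*\lim_n (x^*x)^n$ of $x^*x$ at $1$, observe $u(x)=xp$ by separate weak* continuity, upgrade $qx=q$ to $xq=qx=q$ and hence $q\le p$, and reduce the whole lemma to the clean equivalence ``$x$ peaks at $q$ iff $p=q$.'' The two implications then become exercises in translating between states of $B$ and normal states of $B^{**}$ (GNS plus multiplicativity-at-$1$ for the forward direction; compression by the nonzero projection $p-q$ for the converse), and your observation that a normal state on $B^{**}$ \emph{is} the canonical extension of its restriction to $B$ is exactly the point needed to make the bidual evaluation $\varphi(q)$ behave. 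What the paper's approach buys is brevity and consistency with the surrounding machinery of \cite{BNII}; what yours buys is a transparent, citation-free argument that also isolates the useful intermediate fact $u(x)=xp$ with $q\le p$. The only caveat, shared with the paper's own formulation, is the degenerate case $q=0$: since peak projections are by definition nonzero, one must read the lemma with $q\neq 0$ (otherwise any $x$ with $\Vert x\Vert<1$ satisfies the right-hand side vacuously for $q=0$ while $u(x)=0$ is not a peak projection), and your phrase ``a nonzero projection, so $x$ peaks at $q$'' tacitly uses this standing assumption.
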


\begin{proof}  ($\Leftarrow$) \ This follows from \cite[Lemma 3.1]{BNII}.

 ($\Rightarrow$) \  If $q = u(x)$ then the last assertions of  \cite[Lemma 3.1]{BNII} show that (3) there holds. If 
$\varphi$ is a state of $B$ with $\varphi(q) = 0$, then $\varphi(1-q) = 1$ and by Cauchy-Schwarz
$\varphi(x^* xq)  = 0$,   So $\varphi(x^* x) < 1$ by (3) there.  \end{proof}

{\bf Remark.}  In place of using states  with $\varphi(q) = 0$ in the lemma and its application below,
one can use minimal or  compact projections dominated by $1-q$, as in the proof of
\cite[Theorem 3.4 (2)]{BNII}.

\begin{lemma} \label{sigcp}   Suppose that $A$ 
is an approximately
unital operator algebra, that $q \in A^{**}$ is compact,
and that $p = e-q$ is $\sigma$-compact in $A^{**}$.  Then $q$ is a 
peak projection for $A$, indeed $q = u(x)$ for some
nearly positive $x \in \frac{1}{2} {\mathfrak F}_A$.      
\end{lemma}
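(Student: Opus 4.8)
The plan is to reduce everything to Lemma \ref{chux} and to the characterizations of $\sigma$-compactness in Lemma \ref{cssigc}. Since $p = e-q$ is $\sigma$-compact, Lemma \ref{cssigc} (v) together with its final assertion supplies a nearly positive $y \in \frac{1}{2}{\mathfrak F}_A$ which is strictly real positive in the HSA $D$ determined by $p$, and with $s(y) = p$. In particular $py = yp = y$, so $qy = yq = 0$. The natural first guess is $x_1 = 1-y \in A^1$: since $y \in \frac{1}{2}{\mathfrak F}_A$ means exactly $y^*y \le {\rm Re}(y)$, the same inequality gives $x_1^*x_1 \le {\rm Re}(x_1)$, i.e.\ $x_1 \in \frac{1}{2}{\mathfrak F}_{A^1}$, and $x_1$ is nearly positive since $z \mapsto 1-z$ carries the thin cigar of $y$ onto a thin cigar about $[0,1]$. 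Also $qx_1 = x_1 q = q$. For a state $\varphi$ of a unital $C^*$-algebra containing $A^1$ with $\varphi(q) = 0$ (equivalently $\varphi(p) = 1$) one computes $\varphi(x_1^*x_1) = 1 - 2\varphi({\rm Re}\,y) + \varphi(y^*y) \le 1 - \varphi({\rm Re}\,y)$, and this is $<1$ because ${\rm Re}(y)$ is strictly positive in $C^*(D)$ and $\varphi$ restricts to a state there. By Lemma \ref{chux}, $q = u(x_1)$; so $q$ is already a peak projection, but with peaking element in $A^1$ rather than $A$.

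The remaining task, which is the real content of the lemma, is to replace $x_1$ by a peaking element lying in $A$ itself. Here I would use the compactness of $q$: since $q$ is compact and dominated by the open projection $e = 1_{A^{**}}$, Theorem \ref{urysII} produces a nearly positive $a \in \frac{1}{2}{\mathfrak F}_A$ with $aq = qa = q$. I then set $x = x_1^{1/2} a^{1/2} \in A$. By the product-of-roots estimate \cite[Lemma 4.2(3)]{BBS} (used in exactly this way in the proof of Theorem \ref{urys} (2)) one has $x \in \frac{1}{2}{\mathfrak F}_{A^1} \cap A = \frac{1}{2}{\mathfrak F}_A$, and since $x_1^{1/2}$ and $a^{1/2}$ each fix $q$ on both sides, so does $x$. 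To see $u(x) = q$ I apply Lemma \ref{chux} again: for a state $\varphi$ of $C^*(A)$ with $\varphi(q)=0$, from $(x_1^{1/2})^*x_1^{1/2} \le 1$ we get $x^*x \le a$, so $\varphi(x^*x) \le \varphi(a)$; if $\varphi(a)<1$ we are done, while if $\varphi(a)=1$ then, writing $\varphi = \langle \pi(\cdot)\xi,\xi\rangle$, the vector $\eta = \pi(a^{1/2})\xi$ is a unit vector with $\pi(q)\eta = 0$ (because $a^{1/2}qa^{1/2} = q$), and $\varphi(x^*x) = \chi((x_1^{1/2})^*x_1^{1/2}) < 1$ for the state $\chi = \langle \pi(\cdot)\eta,\eta\rangle$, using that $x_1^{1/2}$ peaks at $q$ (as $u(x_1^{1/2}) = u(x_1) = q$) and that $\chi(q)=0$. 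Hence $q = u(x)$ with $x \in \frac{1}{2}{\mathfrak F}_A$.

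Finally, to upgrade $x$ to a nearly positive element I would pass to a root: for $x \in \frac{1}{2}{\mathfrak F}_A$ of norm $1$ one has $x^{1/m} \in \frac{1}{2}{\mathfrak F}_A$, $x^{1/m}q = qx^{1/m} = q$, and $u(x^{1/m}) = u(x) = q$, while for $m$ large the numerical range of $x^{1/m}$ lies in a horizontal cigar about $[0,1]$ of height $<\epsilon$, exactly as in the proofs of Theorems \ref{urysII} and \ref{urys}. This yields the required nearly positive peaking element. I expect the main obstacle to be precisely the passage from $A^1$ to $A$ in the second paragraph: one must verify both that the product $x_1^{1/2}a^{1/2}$ remains in $\frac{1}{2}{\mathfrak F}_A$ and that it still peaks exactly at $q$, the delicate point being to recover the strict contractivity required by Lemma \ref{chux} for the product (the case $\varphi(a)=1$), even though $a$ by itself may fail to be strictly contractive off $q$.
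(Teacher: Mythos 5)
Your overall strategy (build a peaking element in $A^1$, then cut down to $A$ by multiplying by an element of $\frac{1}{2}{\mathfrak F}_A$ fixing $q$) is close in spirit to the paper's, but your first paragraph contains a genuine error that then infects the second. For a state $\varphi$ of a unital $C^*$-algebra containing $A^1$, the condition $\varphi(q)=0$ is \emph{not} equivalent to $\varphi(p)=1$ when $A$ is nonunital: since $q+p=e=1_{A^{**}}$ one only gets $\varphi(p)=\varphi(e)$, and there are states (e.g.\ any state factoring through $A^1/A$) with $\varphi(e)=0$, hence $\varphi(q)=\varphi(p)=0$. For such a state $\varphi(y)=\varphi(y^*y)=0$, so $\varphi(x_1^*x_1)=1$ and the criterion of Lemma \ref{chux} fails. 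Indeed by \cite[Proposition 2.22]{BRI} one has $u(x_1)=u(1-y)=1-s(y)=1-p=(1-e)+q$, which strictly dominates $q$ unless $A$ is unital (the only nontrivial case). Consequently the step in your second paragraph that invokes ``$x_1^{1/2}$ peaks at $q$'' rests on a false statement.

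The construction $x=x_1^{1/2}a^{1/2}$ is nevertheless repairable, because the specific vector states you produce do see all of $A$: with $\eta=\pi(a^{1/2})\xi$ one has $\chi(e)=\Vert\pi(ea^{1/2})\xi\Vert^2=\Vert\eta\Vert^2=1$, hence $\chi(p)=1-\chi(q)=1$, i.e.\ $\chi(u(x_1))=\chi(1-p)=0$; Lemma \ref{chux} applied to $x_1$ and its \emph{actual} peak $1-p$ then gives $\chi(x_1^*x_1)<1$. So the fix is to replace every appeal to ``$u(x_1)=q$'' by ``$u(x_1)=1-p$'' and to verify $\chi(1-p)=0$, not merely $\chi(q)=0$, for the states you construct. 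Two smaller slips: ``$x^*x\le a$'' is not a meaningful operator inequality ($a$ is not selfadjoint) --- the correct bound is $x^*x\le (a^{1/2})^*a^{1/2}$ --- and the case split should be on whether $\varphi((a^{1/2})^*a^{1/2})=1$ rather than on $\varphi(a)$ (though when $\varphi(a)=1$ Cauchy--Schwarz does force $\varphi((a^{1/2})^*a^{1/2})=1$, as you use). For comparison, the paper takes $b=1-a'$ with $s(a')=p$, so that $u(b)=1-p$, sets $d=rb\in A$ for $r\in{\rm Ball}(A)$ with $rq=q$, and notes that a state $\varphi$ of $C^*(A)$ with $\varphi(q)=0$ extends to a state $\psi$ of the unitization with $\psi(e)=1$, hence $\psi(1-p)=0$ and $\varphi(d^*d)\le\psi(b^*b)<1$ by Lemma \ref{chux}; the upgrade from the contraction $d$ to an element of $\frac{1}{2}{\mathfrak F}_A$ is then done by citing \cite[Theorem 3.4 (3)]{BNII} rather than by your product-of-square-roots device.
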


\begin{proof}  It is only necessary to find such $x \in \frac{1}{2} {\mathfrak F}_A$, the claim about near positivity will follow
from \cite[Corollary 3.3]{BNII}.   If $A$ is unital then by \cite[Proposition 2.22]{BRI} we have $q = s(a)^\perp = u(1-a)$, and we 
are done.  If $A$ is nonunital by the above applied in $A^1$ we have $1-s(a) = u(b)$ where $b = 1-a \in \frac{1}{2} {\mathfrak F}_{A^1}$.
Since $q$ is compact there exists $r \in {\rm Ball}(A)$ with $q = rq$.   
We follow the idea in the proof of \cite[Theorem 3.4 (3)]{BNII}.  Let $d = rb
\in {\rm Ball}(A)$, then $$dq = rbq = rb (1-p)q = r (1-p)q = rq = q .$$
If $\varphi \in S(B)$ with $\varphi(q) = 0$,   then $\varphi$ extends to a state $\psi \in S(B^1)$ with $\psi(q) = 0$.
By Lemma \ref{chux} applied in $B^1$ we have $\psi(b^* b) < 1$, so that 
$$\varphi(d^* d) = \psi(d^* d) < \psi(b^* b) < 1.$$  Thus $q= u(d)$  is a 
peak projection for $A$ by \cite[Corollary 3.3]{BNII}.  By  \cite[Theorem 3.4 (3)]{BNII}, 
$q = u(x)$ for some
 $x \in \frac{1}{2} {\mathfrak F}_A$.  
\end{proof}

\begin{corollary}  \label{ux}  Suppose that $A$ is a (not necessarily
approximately unital) operator algebra, and $B$ is a $C^*$-algebra containing 
$A$.
If a peak projection for $B$ lies in $A^{\perp \perp}$ then it is also a
peak projection for $A$.
\end{corollary}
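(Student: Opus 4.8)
The plan is to use Lemma \ref{chux} as a two-way certificate for peaking, so that the whole problem reduces to replacing a peaking contraction from $B$ by one from $A$. Recall that a peak projection for $A$ is by definition $u(a)$ for some $a \in {\rm Ball}(A)$, and that for such $a$ the partial isometry $u(a)$ lies in $A^{\perp\perp}$. So I would begin by writing the given projection as $q = u(x)$ for some $x \in {\rm Ball}(B)$, and applying the ($\Rightarrow$) direction of Lemma \ref{chux} in $B$: this shows $q$ is a closed projection of $B$ with $qx = q$ and $\varphi(x^*x) < 1$ for every state $\varphi$ of $B$ with $\varphi(q) = 0$. The goal then becomes to produce a single contraction $a \in {\rm Ball}(A)$ satisfying the two conditions $qa = q$ and $\varphi(a^*a) < 1$ for all states $\varphi$ (of a $C^*$-algebra generated by $A$) vanishing on $q$; feeding such an $a$ into the ($\Leftarrow$) direction of Lemma \ref{chux} then yields $q = u(a)$, a peak projection for $A$, since $u(a) \in A^{\perp\perp}$.

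The construction of $a$ splits into the two halves of the peaking condition. For the ``equals $1$ on $q$'' half, I would use that a peak projection is compact, so $q$ is a compact projection of $B$ lying in $A^{\perp\perp}$; by the compact-projection theory of \cite{BNII} this forces $q$ to be compact for $A$, giving $c \in {\rm Ball}(A)$ with $cq = qc = q$. For the strict ``$<1$ off $q$'' half, I would exploit that $q$, being a single-element peak of $B$, is a $G_\delta$ projection, that is, its complement is $\sigma$-compact in $B^{**}$; combined with compactness for $A$ and Lemma \ref{cssigc} (which lets $\sigma$-compactness of an open projection be tested in $B^{**}$ rather than $A^{**}$), this puts us in position to invoke Lemma \ref{sigcp} or \cite[Theorem 3.4]{BNII} to manufacture $a \in \frac{1}{2}{\mathfrak F}_A$ with $q = u(a)$. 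The case where $A$ is not approximately unital would be absorbed by restricting attention to the approximately unital subalgebra in which the interpolating element actually lives (cf.\ Corollary \ref{Ahasc2}).

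I expect the main obstacle to be precisely this transfer: converting a peaking contraction in $B$ into one in $A$ while retaining the strict inequality off $q$, together with the bookkeeping of the identities $1_{A^{**}}$ versus $1_{B^{**}}$ when $A$ is not approximately unital. A naive route, approximating $q$ weak* by ${\rm Ball}(A)$ via Goldstine's theorem, fails here: the inequality $\varphi(a^*a)<1$ must hold \emph{simultaneously} for all states $\varphi$ killing $q$, whereas weak* convergence $a_t \to q$ gives only pointwise (in $\varphi$), non-uniform, control of $\varphi(a_t^*a_t)$. This is exactly why the argument should be routed through the compact- and $\sigma$-compact-projection machinery of \cite{BNII}, which supplies the two halves of the peaking condition inside $A$ separately and uniformly, and through Lemma \ref{cssigc} to reconcile $\sigma$-compactness in $A^{**}$ with that in $B^{**}$.
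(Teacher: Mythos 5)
Your core mechanism is the right one and is essentially the paper's: reduce everything to the $\sigma$-compactness of $1-q$ (which holds in $B^{**}$ because $q$ is the peak of a single element, so $1-q=s(1-b)$ by \cite[Proposition 2.22]{BRI}), and then use Lemma \ref{cssigc} to convert that into $1-q=s(a)$ for a single element $a\in\frac12{\mathfrak F}$ of the smaller algebra, whence $q=u(1-a)$. The paper does exactly this, and in particular never needs your first ``half'' (transferring compactness of $q$ to $A$) nor the re-certification through Lemma \ref{chux}: once one has $a$ with $s(a)=1-q$, the identity $q=u(1-a)$ from \cite[Proposition 2.22]{BRI} already delivers both halves of the peaking condition at once.

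The genuine gap is in your treatment of the non-approximately-unital case. You propose to ``absorb'' it by passing to the approximately unital subalgebra $A_H$ of Corollary \ref{Ahasc2}, but there is no reason the given projection lies in $(A_H)^{\perp\perp}$, and in general it does not: for $A=\{f\in A(\Ddb): f(0)=0\}$ one has ${\mathfrak r}_A=(0)$, hence $A_H=(0)$, yet $q=\chi_{\{-1\}}$ is a peak projection for $C(\Tdb)$ lying in $A^{\perp\perp}$ (and is in fact a peak projection for $A$, peaked by $-z(1-z)/2$, which is a contraction in $A$ but certainly not in $\frac12{\mathfrak F}_A$). So the interpolating element cannot in general be found in $\frac12{\mathfrak F}_A$, and Lemma \ref{sigcp} cannot be applied inside $A$ or $A_H$. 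The correct move, which your proposal is missing, is to run the whole argument in the unitization: apply Lemma \ref{cssigc} to $A^1\subset B^1$ to get $1-q=s(a)$ with $a\in\frac12{\mathfrak F}_{A^1}$, conclude $q=u(1-a)$ is a peak projection for $A^1$, and then invoke \cite[Proposition 6.4]{BRI}, which says precisely that a peak projection for $A^1$ lying in $A^{\perp\perp}$ is a peak projection for $A$. That final descent from $A^1$ to $A$ is the step your outline has no substitute for.
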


\begin{proof}     Suppose that $q = u(b) \in  A^{\perp \perp} \subset (A^1)^{\perp \perp}$ for some
$b \in  \frac{1}{2} {\mathfrak F}_B$.    Then by \cite[Proposition 2.22]{BRI} we have 
$s(1-b) = 1-q$ is a $\sigma$-compact projection in $(A^1)^{\perp \perp}$.   So by Lemma \ref{cssigc} 
we have that $1-q = s(a)$ for some $a \in \frac{1}{2} {\mathfrak F}_{A^1}$.  By \cite[Proposition 2.22]{BRI} 
again, $q = u(1-a)$.  By  \cite[Proposition 6.4]{BRI}, $q$ is  a
peak projection for $A$. \end{proof}  

\begin{theorem} \label{sul}  {\rm (A strict noncommutative Urysohn lemma for operator algebras)} \  Suppose that $A$ is any (possibly not approximately unital) operator algebra and that 
$q$ and $p$ are respectively compact and open projections in $A^{**}$ with $q \leq p$, and $p - q$ $\sigma$-compact. 
Then there exists $x \in  \frac{1}{2} {\mathfrak F}_{A}$ such that $xq = qx = q$ and $x p = px = x$,
and such that $x$ peaks at $q$ (that is, $u(x) = q$) and $s(x) = p$, and $1-x$ peaks at $1-p$ with respect to $A^1$
(that is, $u(1-x) = 1-p$).   The latter identities imply
 that $x$ is real strictly positive
in the hereditary subalgebra $C$ associated with $p$, and $1-x$  is real strictly positive
in the hereditary subalgebra  in $A^1$ associated with $1-q$.   Also, $s(x(1-x)) = p-q$,
so that $x(1-x)$ is  real strictly positive
in the hereditary subalgebra  in $A$ associated with $p-q$.  We can also have $x$  `almost positive',  in the sense that 
if $\epsilon > 0$ is given one can choose $x$ as above but also  satisfying 
${\rm Re} (x) \geq 0$ and $\Vert x - {\rm Re} (x) \Vert < \epsilon$.  
 \end{theorem}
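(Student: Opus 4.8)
The plan is to localise to the hereditary subalgebra $C$ associated with the open projection $p$; this is approximately unital with $1_{C^{**}} = p$, which conveniently disposes of the case where $A$ itself has no approximate identity. Inside $C$ the projection $q$ is still compact and $p - q = 1_{C^{**}} - q$ is $\sigma$-compact, so Lemma \ref{sigcp} applied in $C$ yields a nearly positive $y \in \frac{1}{2}{\mathfrak F}_C$ peaking at $q$, i.e.\ $u(y) = q$ and $yq = qy = q$; since the roots $y^{1/n}$ also fix $q$, one checks $q = u(y) \leq s(y) \leq p$. Applying Lemma \ref{cssigc} to the given $\sigma$-compact projection $p - q$ produces a nearly positive $w \in \frac{1}{2}{\mathfrak F}_A$ that is strictly real positive in the hereditary subalgebra of $p - q$, so $s(w) = p - q$; rescaling we may assume $\|w\| < 1$, whence $u(w) = 0$ and $wq = qw = 0$.

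The crux is to build from $y$ and $w$ a single $x \in \frac{1}{2}{\mathfrak F}_A$ that peaks at $q$ and has full support $p$. I would take the noncommutative ``fuzzy union of supports'' $x = 1 - B$ with $B = (1-y)^{1/2}(1-w)(1-y)^{1/2}$. Writing $C_0 = 1 - y$ and $D_0 = 1 - w$, both lie in $\frac{1}{2}{\mathfrak F}_{A^1}$ and $B = C_0^{1/2} D_0 C_0^{1/2} \in 1 + A$; the membership $x \in \frac{1}{2}{\mathfrak F}_A$ is then exactly the statement $B \in \frac{1}{2}{\mathfrak F}_{A^1}$, which is the sandwich estimate \cite[Lemma 4.2]{BBS} (the operator-algebra analogue of $0 \le C_0^{1/2} D_0 C_0^{1/2} \le 1$). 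Because $y(1-p) = w(1-p) = 0$ and $(1-y)^{1/2}q = 0$ one gets at once $xq = qx = q$ and $B(1-p) = (1-p)B = 1-p$.

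The main obstacle is to pin down the peak and support of $x$ exactly. By \cite[Proposition 2.22]{BRI} (as used in Lemma \ref{sigcp} and Corollary \ref{ux}) one has $u(x) = 1 - s(B)$ and $s(x) = 1 - u(B)$, so everything reduces to proving $s(B) = 1 - q$ and $u(B) = 1 - p$. Here $D_0$ has full support ($u(w) = 0$) while $C_0$ has support $1 - q$, so the sandwich satisfies $s(B) = s(C_0) = 1 - q$; and $B$ equals $1$ precisely where both $C_0$ and $D_0$ peak, namely on $u(C_0) \wedge u(D_0) = (1 - s(y)) \wedge (1 - (p-q)) = 1 - (s(y) \vee (p-q)) = 1 - p$, using $s(y) \vee (p-q) = p$. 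I would make these two identities rigorous through the peak criterion of Lemma \ref{chux} together with the support calculus of \cite{BRI,BRII,BNII}; carrying the $C^*$-intuition through the non-selfadjoint expression $B^* B$ is the delicate and most laborious step. Once $u(x) = q$ and $s(x) = p$ are in hand, the remaining structural claims are formal: $u(1-x) = 1 - s(x) = 1-p$ and $s(1-x) = 1 - u(x) = 1-q$ by \cite[Proposition 2.22]{BRI}, and feeding these supports into the ${\mathfrak r}_A$-version of \cite[Theorem 2.19]{BRI} shows $x$ is strictly real positive in $C$ and $1-x$ is strictly real positive in the hereditary subalgebra of $A^1$ determined by $1-q$.

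For the product, $x$ and $1-x$ commute inside the abelian algebra ${\rm oa}(x)$, and $x(1-x) = \frac{1}{4}(1 - (1-2x)^2) \in {\mathfrak r}_A$ because $\zeta \mapsto \frac{1}{4}(1-\zeta^2)$ has nonnegative real part on the closed disk (the disk-algebra functional calculus of Lemma \ref{sfex} applied to the contraction $1 - 2x$); computing supports in the commutative $C^*$-algebra generated by ${\rm oa}(x)$ then gives $s(x(1-x)) = s(x) \wedge s(1-x) = p \wedge (1-q) = p - q$, as required. Finally, for the ``almost positive'' refinement I would replace $x$ by $x^{1/m}$ with $m$ large: roots leave every datum untouched ($u(x^{1/m}) = q$, $s(x^{1/m}) = p$, so the $1-x^{1/m}$ and product identities persist, and $x^{1/m}p = px^{1/m} = x^{1/m}$ since $x^{1/m} \in {\rm oa}(x) \subseteq C$), while by the discussion of roots preceding Lemma \ref{roots} and by Lemma \ref{rootf} the element $x^{1/m}$ is sectorial of arbitrarily small angle, so that ${\rm Re}(x^{1/m}) \geq 0$ and $\|x^{1/m} - {\rm Re}(x^{1/m})\| < \epsilon$ for $m$ large.
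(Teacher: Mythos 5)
Your overall architecture (localize to the HSA $C$ of $p$, extract one element peaking at $q$ via Lemma \ref{sigcp} and one supported on $p-q$ via Lemma \ref{cssigc}, then combine) parallels the paper, but the combination step is where the proof breaks down. You form the sandwich $B=(1-y)^{1/2}(1-w)(1-y)^{1/2}$ and assert $B\in\frac{1}{2}{\mathfrak F}_{A^1}$ as ``the sandwich estimate \cite[Lemma 4.2]{BBS}''. No such general estimate can hold: already for scalars, $c=\frac{1}{2}(1+i)$ lies in $\frac{1}{2}{\mathfrak F}_{\Cdb}$ but $c^{1/2}cc^{1/2}=c^2=\frac{i}{2}$ satisfies $|1-2c^2|=\sqrt{2}>1$, so the set $\frac{1}{2}{\mathfrak F}$ is not stable under such triple products of its (non-selfadjoint) elements. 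The result from \cite{BBS} that the paper actually invokes concerns products of two \emph{square roots} $z^{1/2}x^{1/2}$, which is a genuinely more special situation; and near-positivity of $1-y$ and $1-w$ does not rescue you, since an element $\epsilon$-close to a positive contraction need not lie in $\frac{1}{2}{\mathfrak F}$ (e.g.\ $\delta i$). Without $x=1-B\in\frac{1}{2}{\mathfrak F}_A$ you lose the very conclusion of the theorem and also the tool $u(1-B)=1-s(B)$ from \cite[Proposition 2.22]{BRI} on which everything downstream rests. The second gap is the one you flag yourself: the identities $s(B)=s(C_0)$ and $u(B)=u(C_0)\wedge u(D_0)$ for a non-commuting, non-selfadjoint sandwich are precisely the hard content of the theorem, and ``carrying the $C^*$-intuition through $B^*B$'' is not an argument. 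As written, the central step of the proof is missing.

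For comparison, the paper sidesteps both problems by combining the two building blocks \emph{additively} rather than multiplicatively: it produces a nearly positive $f_2\in\frac{1}{2}{\mathfrak F}_A$ with $u(f_2)=q$ and $f_2p=pf_2=f_2$ (via Lemma \ref{sigcp} in $C$ together with Theorem \ref{urysII} and the device of \cite[Theorem 3.4 (3)]{BNII}), and a nearly positive $f_1\in\frac{1}{2}{\mathfrak F}_{A^1}$ with $u(f_1)=1-p$ and $f_1q=qf_1=0$, and sets $x=\frac{1}{2}\bigl(f_2+(1-f_1)\bigr)$. Membership in $\frac{1}{2}{\mathfrak F}_{A^1}$ is then immediate from convexity, and the peak computations $u(x)=u(f_2)\wedge u(1-f_1)=q$ and $u(1-x)=u(1-f_2)\wedge u(f_1)=1-p$ follow from \cite[Proposition 1.1]{BNII}, which computes $u$ of an \emph{average} of contractions as a meet. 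If you want to salvage a multiplicative construction you would need to prove the sandwich analogues of these facts from scratch (or restrict to a commuting situation, as in the paper's Remark 2 after the theorem, where $x=(1-r)b+(1-b)r$ works because $1-2x$ factors as a product of two commuting contractions). Your closing steps (the functional-calculus computation of $s(x(1-x))$ and the passage to $x^{1/m}$ for near-positivity) are fine once a correct $x$ is in hand.
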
  \begin{proof}   Consider the hereditary subalgebra $C$ associated with $p$.   It is clear e.g.\ from
Lemma \ref{cssigc}, that  $p-q$ is a $\sigma$-compact projection with respect to  $C$.  
Applying Lemma \ref{sigcp} in $C$, we can choose 
$b \in \frac{1}{2} {\mathfrak F}_C \subset  \frac{1}{2} {\mathfrak F}_A$ with
$u(b) = q$.  
By the last Urysohn lemma above, we can choose 
$r  \in \frac{1}{2} {\mathfrak F}_A$ with $r p =  p r = r$ and $r q = q r = q$.
The argument in the 
proof of \cite[Theorem 3.4 (3)]{BNII} shows that the  closed algebra
$D$ generated by 
$x = r b r$ and $b$, is approximately unital, and that there is
an element $f_2 \in D \cap \frac{1}{2} {\mathfrak F}_A$ with $u(f_2) = q$.
Note that $f_2 p = p f_2= f_2$.    By taking roots we can assume that $f_2$ is nearly positive.

Similarly, but working in $A^1$,
one sees that there is a nearly positive $f_1 \in \frac{1}{2} {\mathfrak F}_{A^1}$ with $f_1 q=  q f_1 = 0$ and $u(f_1) = 1-p$.  We have $f_1 (1-p) = 
1-p$ which implies that $(1-f_1) p = 1-f_1$.
Let $x =  \frac{1}{2} (f_2 + (1-f_1)) \in \frac{1}{2} {\mathfrak F}_{A^1}$. 
Since $f_1, f_2$ are nearly positive, it is easy to see that $x$ is almost positive in the sense above, by a variant of the 
computation involving $\Vert {\rm Im} \, (x) \Vert$ in one of the early paragraphs of our paper.   
We have $1-x = \frac{1}{2} ((1-f_2) + f_1)$. 
Within $(A^{1})^{**}$ we have by \cite[Proposition 1.1]{BNII} (and the fact that a tripotent dominated by a projection in the natural
ordering on tripotents
is
a projection) that $$u(x) = u(f_2) \wedge u(1-f_1) = u(f_2) = q,$$
since $(1-f_1) q= q$ which implies $u(1-f_1) \geq q$.   Similarly $$u(1-x) = u(1-f_2)
\wedge u(f_1) =    u(f_1) =    1-p,$$ since $(1-f_2) (1-p) = 1-p$ and so $u(1-f_2) \geq 1-p$.  

Since $x p  = x$, and $p \in A^{\perp \perp}$, and $A^{\perp \perp}$ is an ideal in 
$(A^{1})^{**}$,  we see that
$x \in A^{\perp \perp} \cap A^1 = A$.    

Note that $\Vert 1 - 4 (x - x^2) \Vert = \Vert (1 - 2 x)^2 \Vert  \leq 1$, so $x (1-x) \in \frac{1}{4}  {\mathfrak F}_{A}$.
Then by Lemma \ref{rootf0}, 
$s(x(1-x))$ may be regarded as the strong limit 
of $(x(1-x))^{\frac{1}{n}} = x^{\frac{1}{n}} (1-x)^{\frac{1}{n}}$ (see e.g.\ \cite{BBS} for the last identity), which is $s(x) s(1-x) = p(1-q) = p-q$.
The `strictly real positive' assertions follow from Lemma  \ref{cssigc}.
\end{proof}  

{\bf Remarks.}    1) \ One may replace the hypothesis in Theorem \ref{sul} that $p - q$ be $\sigma$-compact, by the premise that 
both $q$ and $1-p$ are peak projections (in $A$ and $A^1$).  Indeed if $q = u(w), 1-p = u(z)$, then by \cite[Corollary 3.5]{BNII} 
$1 + q - p = u(w) + u(z) = u(k)$ say.  Hence by \cite[Proposition 2.22]{BRI} and 
Lemma \ref{cssigc}  we have that $p - q = 1 - u(k) = s(k)$ is $\sigma$-compact.  

\smallskip

2) \ Under a commuting hypothesis we offer a quicker proof inspired by the proof of
\cite[Theorem 2]{PedS}: choose $b \in \frac{1}{2} {\mathfrak F}_A$ with $s(b) = p-q$.  
Then if $r$ is as in the last proof, and $br = rb$, set $x = (1-r) b + (1-b) r$.  Then 
$1 - 2x = (1-2b)(1-2r)$, a contraction, so that $x \in \frac{1}{2} {\mathfrak F}_A$, and it is
easy to see that $x q = q$ and $x p = x$.  

\bigskip

We give an application of our strict noncommutative Urysohn lemma  to the lifting of projections, a variant of  
\cite[Corollary 4]{PedS}.   First we will need a sharpening of \cite[Proposition 6.2]{BRI}.  Recall that if $A$ is an 
operator algebra containing a closed approximately unital two-sided ideal $J$ with 
 support
projection $p$, then  $p$ is central in 
$(A^1)^{**}$ since $J$ is a two-sided ideal.  We may view $A/J \subset A^{**}
(1-p)$ via the map $a+J \mapsto a (1-p)$, in view of the identifications
$$(A/J)^{**} \cong A^{**} /J^{\perp \perp} \cong A^{**} /A^{**} p \cong A^{**}
(1-p) \subset A^{**}.$$  

\begin{lemma} \label{broy}    Let $A$ be an operator algebra containing a closed approximately unital two-sided ideal $J$ with 
 support
projection $p$, and suppose that $D$ is a HSA in $A/J$.  Regarding
$(A/J)^{**} \cong  A^{**}
(1-p)$ as above, let $r$ be the projection in $A^{**}
(1-p)$ corresponding to the support projection of $D$.  Then 
the preimage of $D$ in $A$ under the quotient map is a HSA in $A$ with support projection $p + r$.
\end{lemma}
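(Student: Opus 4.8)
Write $\pi : A \to A/J$ for the quotient homomorphism and set $E = \pi^{-1}(D)$, the preimage named in the statement; since $0 \in D$ we have $J \subseteq E$, and $E$ is a closed subalgebra of $A$ with $\pi(E) = D$. The plan is to identify the weak* closure $E^{\perp\perp}$ inside $A^{**}$ with the corner $(p+r) A^{**} (p+r)$, and then to read off both the approximate identity and the support projection from this identification. The hereditary condition is immediate and I would dispose of it first: if $e_1, e_2 \in E$ and $a \in A$, then $\pi(e_1 a e_2) = \pi(e_1) \pi(a) \pi(e_2)$ lies in $D (A/J) D \subseteq D$ because $D$ is a HSA in $A/J$, so $e_1 a e_2 \in E$; thus $E A E \subseteq E$.

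The heart of the matter is the corner identification. Because $p$ is central in $(A^1)^{**}$ and $r \in A^{**}(1-p)$, we have $pr = rp = 0$, so $p+r$ is a projection, and the cross terms $p A^{**} r$ and $r A^{**} p$ vanish by centrality, giving $(p+r) A^{**}(p+r) = p A^{**} \oplus r A^{**} r$. I would first check the inclusion $E \subseteq (p+r) A^{**}(p+r)$. For any $a \in A$, centrality of $p$ yields $pap = pa$ and $par = rap = 0$, so $(p+r) a (p+r) = pa + rar$; and under the identification $(A/J)^{**} \cong A^{**}(1-p)$ of the preamble, the condition $\pi(a) \in D$ translates, since $D$ is the HSA with support projection $r$, into $r a r = a(1-p)$. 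Substituting gives $(p+r) a (p+r) = pa + a(1-p) = a$ for every $a \in E$, whence $E \subseteq (p+r)A^{**}(p+r)$ and so $E^{\perp\perp} \subseteq (p+r)A^{**}(p+r)$. For the reverse inclusion I would apply the (exact) second dual functor to the short exact sequence $0 \to J \to E \to D \to 0$ induced by $\pi$, identifying $E^{**} \cong E^{\perp\perp}$, $J^{\perp\perp} = pA^{**}$, and $D^{\perp\perp} = r A^{**} r$; this exhibits $E^{\perp\perp}$ as an extension of $r A^{**} r$ by $p A^{**}$ sitting inside $p A^{**} \oplus r A^{**} r$, which together with the inclusion already proved forces $E^{\perp\perp} = p A^{**} \oplus r A^{**} r = (p+r) A^{**}(p+r)$. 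I expect the main obstacle to be precisely this computation: correctly transporting the membership condition for $D$ through the second-dual identification, and keeping track that it is the centrality of $p$ that annihilates all the cross terms and splits the corner as a direct sum.

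With the corner identified, the remainder is formal. The algebra $E^{\perp\perp} \cong E^{**}$, with its Arens product, is unital with identity $p+r$, which is a projection and hence of norm one; by \cite[Proposition 2.5.8]{BLM} this means $E$ is approximately unital. Therefore $E$ is an approximately unital subalgebra of $A$ satisfying $E A E \subseteq E$, that is, a HSA in $A$, and its support projection is the weak* limit of any cai for $E$, namely the identity of $E^{\perp\perp}$, which is $p+r$; in particular $p+r$ is open. This completes the proof.
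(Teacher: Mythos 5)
Your proof is correct and takes essentially the same route as the paper's: both arguments reduce to identifying $E^{\perp\perp}$ with the corner $(p+r)A^{**}(p+r)$ via the central decomposition $\eta \mapsto (\eta p, \eta(1-p))$, and then reading off the cai and the support projection from the contractive identity $p+r$ of that corner. The only difference is cosmetic: you verify the hereditary property and the bidual splitting $E^{\perp\perp} = J^{\perp\perp}\oplus D^{\perp\perp}$ by hand (via exactness of the second-dual functor), where the paper delegates these points to the proofs of Proposition 6.2 and Corollary 6.3 of \cite{BRI}.
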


\begin{proof}   By the proof of \cite[Proposition 6.2 and Corollary 6.3]{BRI}, the preimage $C$ of $D$ in $A$ is a HSA in $A$, 
and $C/J \cong D$.  Thus $C^{**} \cong J^{\perp \perp} \oplus^\infty D^{**}$, and we can view 
 the isomorphism $C^{**} \to J^{\perp \perp} \oplus^\infty D^{**}$ here as the restriction of the completely isometric
map $\eta \mapsto (\eta p, \eta(1-p))$ setting up the isomorphism  $A^{**} \cong J^{\perp \perp} \oplus^\infty A^{**}
(1-p)$.
If $\eta \in A^{**}$ with $\eta(1-p) \in r A^{**} r  \cong D^{**}$, then $\eta \in  \eta p + r A^{**} r \subset (p+r) A^{**} (p+r)$.
Hence $$C^{\perp \perp} = 
\{ \eta \in A^{**} : \eta(1-p) \in r A^{**} r  \} = (p+r) A^{**} (p+r).$$   Thus $p+r$ is the support
projection of $C$, so is open. \end{proof}  

\begin{corollary} \label{liftp}   Let $A$ be an 
operator algebra containing a closed approximately unital two-sided ideal $J$ with $\sigma$-compact support
projection $p$, and suppose that $q$ is a projection in $A/J$.  Then there exists 
almost positive $x \in \frac{1}{2} {\mathfrak F}_A$ such that $x + J = p$ and $s(x(1-x)) = p$,
so that $x(1-x)$ is  real strictly positive
in  $J$.   Also, the peak 
$u(x)$ for $x$ equals the canonical copy
of $q$ in $A^{**} (1-p)$.  \end{corollary}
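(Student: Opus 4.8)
The plan is to reduce the statement to the strict noncommutative Urysohn lemma (Theorem~\ref{sul}) applied inside a suitable hereditary subalgebra of $A$, so that the lift $x$ is produced directly with all its required peak and support data. Write $\tilde{q}$ for the canonical copy of $q$ in $A^{**}(1-p)$ under the identification $a+J \mapsto a(1-p)$ recalled above Lemma~\ref{broy}; this is a projection orthogonal to the central projection $p$, and it is exactly the projection that $u(x)$ should equal. Since $q$ is a projection in $A/J$, the corner $D = q(A/J)q$ is a unital hereditary subalgebra of $A/J$ with identity $q$, and its support projection in $(A/J)^{**} \cong A^{**}(1-p)$ is $\tilde{q}$. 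Applying Lemma~\ref{broy} to $D$, the preimage $C$ of $D$ under the quotient map is a hereditary subalgebra of $A$, hence approximately unital, with support projection $p+\tilde{q} = 1_{C^{**}}$; moreover $J \subseteq C$ is precisely the ideal of $C$ with support $p$, and $C/J \cong D$ is unital with identity $\tilde{q}$.

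The key structural point, and the step I expect to be the main obstacle, is that inside $C^{**}$ we have $\tilde{q} = 1_{C^{**}} - p$, and that $\tilde{q}$ is \emph{compact} with respect to $C$. Since $C$ is approximately unital, compactness of $\tilde{q}$ is equivalent to $\tilde{q}$ being closed with respect to $C^1$, i.e.\ to $1-\tilde{q}$ being open with respect to $C^1$. Here I would write $1-\tilde{q} = (1 - 1_{C^{**}}) + p$ as a supremum of two orthogonal open projections: the projection $1 - 1_{C^{**}}$ is clopen in $(C^1)^{**}$ because it is central there (it annihilates $C$ and acts as the identity on the scalar part of the unitization, and is the weak* limit of $1-e_t$ for a cai $(e_t)$ of $C$), while $p$ is open with respect to $C$ as the support of the ideal $J$, hence open with respect to $C^1$. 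A supremum of open projections being open, $\tilde{q}$ is closed with respect to $C^1$ and so compact. I would also note that $p = 1_{C^{**}} - \tilde{q}$ is $\sigma$-compact with respect to $C$: this property is intrinsic to $J$, so it follows from the hypothesis and Lemma~\ref{cssigc} (take $p = s(w)$ with $w \in {\mathfrak r}_J \subseteq {\mathfrak r}_C$).

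With these facts in hand I would apply Theorem~\ref{sul} in the approximately unital algebra $C$, taking the compact projection to be $\tilde{q}$ and the open projection to be $1_{C^{**}}$, whose difference is the $\sigma$-compact projection $p$. This yields an almost positive $x \in \frac{1}{2}{\mathfrak F}_C \subseteq \frac{1}{2}{\mathfrak F}_A$ with $x\tilde{q} = \tilde{q}x = \tilde{q}$, with $u(x) = \tilde{q}$ and $s(x) = 1_{C^{**}}$, and with $s(x(1-x)) = 1_{C^{**}} - \tilde{q} = p$, where $x(1-x)$ is strictly real positive in the hereditary subalgebra of $C$ with support $p$, namely $J$. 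In particular $u(x)$ is the canonical copy $\tilde{q}$ of $q$, and all the asserted positivity and support properties hold except the lifting identity itself.

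Finally I would verify the lifting identity $x + J = q$, which under the identification amounts to $x(1-p) = \tilde{q}$. Using that $p$ is central with $\tilde{q} = (1-p)\tilde{q}$, and that $x = 1_{C^{**}}\, x\, 1_{C^{**}} = (p+\tilde{q})\, x\, (p+\tilde{q})$, one computes $x(1-p) = (p+\tilde{q})\, x\, \tilde{q}$; the peak relation $x\tilde{q} = \tilde{q}$ then gives $x(1-p) = (p+\tilde{q})\tilde{q} = \tilde{q}$, as required. Everything after the compactness of $\tilde{q}$ is routine bookkeeping with the identifications from Lemma~\ref{broy} and the conclusions of Theorem~\ref{sul}, so the genuine work lies in establishing that $\tilde{q} = 1_{C^{**}}-p$ is a compact (indeed peak) projection, by controlling $p$ and $1-1_{C^{**}}$ inside $(C^1)^{**}$.
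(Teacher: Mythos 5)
Your overall architecture is close to the paper's: both use Lemma \ref{broy} to manufacture the open projection $p+\tilde q$ dominating the copy $\tilde q$ of $q$, and both then invoke Theorem \ref{sul} with $\sigma$-compact difference $p$; your final bookkeeping ($x(1-p)=\tilde q$, hence $x+J=q$, and $s(x(1-x))=p$) is fine. But the step you correctly single out as the crux --- compactness of $\tilde q$ --- is where your argument breaks. The claim that $1-1_{C^{**}}$ is clopen, in particular \emph{open}, with respect to $C^1$ is false whenever $C$ is nonunital. That projection is central in $(C^1)^{**}$ and it is \emph{closed} (it is the complement of the support projection of the HSA $C$ of $C^1$), but the corner $(1-1_{C^{**}})(C^1)^{**}(1-1_{C^{**}})\cap C^1$ equals $(0)$ when $C$ is nonunital, so the only open projection it could support is $0$. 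Concretely, for $C=c_0$ one has $C^1=c\cong C(\Ndb\cup\{\infty\})$ and $1-1_{C^{**}}$ is the point mass at $\infty$, and $\{\infty\}$ is not open in the one-point compactification. Your proposed witnessing net $1-e_t$ does converge weak* to $1-1_{C^{**}}$, but it does not satisfy $x_t=(1-1_{C^{**}})x_t$, which is what the definition of openness demands; weak* approximability from the algebra alone is not enough. Consequently the decomposition $1-\tilde q=(1-1_{C^{**}})+p$ does not exhibit $1-\tilde q$ as a supremum of open projections, and the compactness of $\tilde q$ with respect to $C$ is left unproved. (The conclusion itself may well hold --- e.g.\ for $A=C_0((0,2])$, $J=C_0((0,1))$, $q=1_{A/J}$ one finds $1-\tilde q = 1_{[0,1)}$ is open even though $1_{\{0\}}$ is not --- but openness of the sum cannot be read off from your two pieces.)

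The paper gets compactness by a different and unavoidable-looking device: it first lifts $q$ to some $y\in\frac12{\mathfrak F}_A$ using \cite[Proposition 2.22]{BRI} (lifting through the quotient by the approximately unital ideal $J$), so that $\tilde q=y(1-p)$. Then $\tilde q=\tilde q^{\,n}=y^n(1-p)\to u(y)(1-p)$ weak*, so $\tilde q=u(y)\wedge(1-p)$ is an infimum of two closed projections in $(A^1)^{**}$ (a peak projection and the complement of an open projection), hence closed; and $y\tilde q=\tilde q$ with $y\in{\rm Ball}(A)$ gives compactness. With that in hand one applies Theorem \ref{sul} in $A$ itself to the pair $(\tilde q,\,p+\tilde q)$, which sidesteps any need to verify compactness relative to the subalgebra $C$. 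If you want to keep your version, you must either import this lifting argument or otherwise prove that $\tilde q$ is closed with respect to $C^1$; as written there is a genuine gap at exactly the point you flagged.
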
   \begin{proof} 
 By  \cite[Proposition 2.22]{BRI}, $q$ has a lift $y \in \frac{1}{2} {\mathfrak F}_A$, so that the copy of $q$ in 
$A^{**} (1-p)$ is $r = y (1-p)$.  Thus  $r$ is a projection in $A^{**}$.   Also, 
$r = (y (1-p))^n = y^n (1-p) \to u(y) (1-p)$ weak*.   This implies that $r = u(y) (1-p) = 
u(y) \wedge (1-p)$ is a closed projection in $A^1$,
hence is a compact projection in $A^{**}$.   Clearly $r 
= y r$.      By Lemma \ref{broy}  
the projection $p + r$ is open in $A^{**}$, and it dominates $r$.   We apply Theorem \ref{sul} to see that 
there exists almost positive  $x \in  \frac{1}{2} {\mathfrak F}_{A}$ such that $u(x) = r$, 
and $xr = rx = r$ and $x (p +r) = (p + r)x = x$.  Thus $y (1-p) = r = xr = x(1-p)$, and so 
 $x + J = q$.     Also, $s(x(1-x)) = p$.   
\end{proof}

We now turn to noncommutative peak interpolation.     The following is an improvement of \cite[Lemma  2.1]{Bnpi}.
 
\begin{proposition} \label{interpnp}  Suppose that $A$ is an approximately unital operator algebra, and $B$ is a $C^*$-algebra generated by $A$.  If
$c \in B_+$ with $\Vert c \Vert < 1$ then there
exists an $a \in \frac{1}{2} {\mathfrak F}_A$ with $|1 - a|^2 \leq 1-c$.
Indeed such $a$ can be chosen to also be nearly positive.  
\end{proposition}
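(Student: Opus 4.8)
The plan is to deduce this directly from Theorem \ref{havin} (2') together with a one-line operator computation; almost all of the real work has already been done in establishing that implication. First I would apply Theorem \ref{havin} (2') to the positive element $c \in {\mathcal U}_B$, obtaining a nearly positive $a \in \frac{1}{2} {\mathfrak F}_A$ with $c \preccurlyeq a$. Since $c$ is selfadjoint, this relation unwinds (in the generated $C^*$-algebra, where the ordering $\preccurlyeq$ is computed) to the operator inequality $c \leq {\rm Re}(a) = \frac{1}{2}(a + a^*)$.

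Next I would use the defining inequality for membership in $\frac{1}{2} {\mathfrak F}_A$, namely that $\Vert 1 - 2a \Vert \leq 1$ is equivalent to $a^* a \leq {\rm Re}(a)$ (this is the same fact used in the proof of Lemma \ref{Font}). Expanding the modulus, $|1-a|^2 = (1-a)^*(1-a) = 1 - 2 {\rm Re}(a) + a^* a$. Substituting $a^* a \leq {\rm Re}(a)$ gives $|1-a|^2 \leq 1 - {\rm Re}(a)$, and then $c \leq {\rm Re}(a)$ yields $|1-a|^2 \leq 1 - {\rm Re}(a) \leq 1 - c$, which is exactly the desired conclusion. The `nearly positive' clause is then inherited for free, since the element $a$ produced by Theorem \ref{havin} (2') is already nearly positive.

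I do not expect a serious obstacle here: the substantive input, namely cofinality of $A$ in $B$ with a nearly positive dominator lying in $\frac{1}{2} {\mathfrak F}_A$, is precisely the content of Theorem \ref{havin} (2'). The only points demanding care are bookkeeping ones: checking that $c \preccurlyeq a$ really translates to the genuine positivity statement $c \leq {\rm Re}(a)$ in the generated $C^*$-algebra (using that $A + A^*$ sits well-definedly inside $A^1 + (A^1)^*$, as recorded in the Introduction), and confirming that the two inequalities $a^* a \leq {\rm Re}(a)$ and $c \leq {\rm Re}(a)$ combine in the correct direction to give $|1-a|^2 \leq 1-c$. It is worth remarking that this route is slightly slicker than the one through \cite[Lemma 2.1]{Bnpi} sketched in Remark 2 after Theorem \ref{havin}, which produces a dominator in $2 {\mathfrak F}_A$ rather than in $\frac{1}{2} {\mathfrak F}_A$; the sharpening to $\frac{1}{2} {\mathfrak F}_A$ is exactly what forces $a^* a \leq {\rm Re}(a)$ and hence makes $|1-a|^2$ small.
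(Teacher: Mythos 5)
Your proposal is correct and is essentially identical to the paper's own proof: both apply Theorem \ref{havin} (2') to get a nearly positive $a \in \frac{1}{2}{\mathfrak F}_A$ with $c \leq {\rm Re}(a)$, and both combine this with $a^*a \leq {\rm Re}(a)$ to obtain $|1-a|^2 = 1 - 2{\rm Re}(a) + a^*a \leq 1 - c$. The bookkeeping points you flag are handled the same way in the paper, so there is nothing to add.
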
 \begin{proof}   By Theorem \ref{havin} (2'), there exists 
nearly positive $a \in \frac{1}{2} {\mathfrak F}_A$ with $$c \leq {\rm Re}(a) \leq 2 {\rm Re}(a) - a^* a$$
since  $a^* a \leq {\rm Re}(a)$ if $a \in \frac{1}{2} {\mathfrak F}_A$.
Thus $|1-a|^2 \leq 1-c$.
 \end{proof}   

In the last result one cannot hope to replace the hypothesis 
$\Vert c \Vert < 1$ by $\Vert c \Vert \leq 1$, as can be seen 
with the example in Remark 1 after Theorem \ref{havin}.

Proposition \ref{interpnp},
like several other results in this
paper,  is equivalent to Read's theorem from \cite{Read}.
Indeed if $e$ is an identity of norm $1$ for $A^{**}$,
and if we choose $a_t
\in \frac{1}{2} {\mathfrak F}_A$ with
$|e - a_t|^2 \leq e - e_t$, where $(e_t)$ is any positive cai in ${\mathcal U}_B$,
then we have
$$|\langle (e - a_t) \zeta , \eta \rangle|^2 \leq 
\Vert (e - a_t) \zeta \Vert^2 = \langle |e - a_t|^2 \zeta, \zeta \rangle
\leq \langle (e - e_t) \zeta, \zeta \rangle \to 0,$$
for all $\zeta \in H$.   Thus $e$ is a weak* limit of a net in
$\frac{1}{2} {\mathfrak F}_A$, and hence by the usual argument
there exists a cai in $\frac{1}{2} {\mathfrak F}_A$.
  
 As in \cite[Lemma 2.1]{Bnpi}, Proposition \ref{interpnp} can be interpreted as 
a noncommutative peak interpolation result.  Namely, if the projection 
$q = 1_{A^1}-e$ is dominated by $d = 1-c$ then there exists an element $g = 1-a 
\in A^1$ with $g q = q g = q$, and $g^* g \leq d$.    The new point is
that $a$ is in $\frac{1}{2} {\mathfrak F}_A$ and nearly positive.

This leads one to ask whether the other noncommutative peak interpolation results
we have obtained in earlier papers can also be done with the interpolating element in $\frac{1}{2} {\mathfrak F}_A$,
or more generally with the interpolating element having prescribed numerical range.   
We will discuss this below.   As discussed at the end of \cite{BOZ}, lifting 
elements without increasing the norm while keeping the numerical range in a fixed compact convex subset $E$ of the plane, may be regarded
as a kind of Tietze extension theorem. (In the usual Tietze theorem $E = [-1,1]$.    It should be 
pointed out that in the usual Tietze theorem one can lift 
elements from the multiplier algebra whereas here we are being more modest.)   We refer the reader to \cite[Section 3]{Brown} for a discussion of some other kinds of Tietze theorems for $C^*$-algebras.

The following two theorems may be regarded as peak interpolation theorems `with positivity'.   
They are generalizations of \cite[Theorem 5.1]{BRII} (see also Corollary 2.2 in that reference). 

\begin{theorem} \label{peakthang1}   Suppose that $A$ is an operator algebra
(not necessarily approximately unital),
and that  $q$ is a closed projection in $(A^1)^{**}$.
Suppose that $b \in A$ with $b q= qb$ and $\Vert b q \Vert \leq 1$,
and $\Vert (1-2b) q \Vert \leq 1$.  Then  there exists an element $g \in  \frac{1}{2} {\mathfrak F}_A
\subset {\rm Ball}(A)$ with $g q =  q g = b q$.
\end{theorem}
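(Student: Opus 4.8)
The plan is to convert the $\frac{1}{2} {\mathfrak F}_A$ constraint into a plain contractivity constraint and then feed the resulting data into the peak interpolation machinery of \cite[Theorem 5.1]{BRII}. First I would set $c = 1 - 2b \in A^1$. The hypothesis $bq = qb$ gives $cq = qc$, the hypothesis $\Vert (1-2b) q \Vert \leq 1$ is exactly $\Vert cq \Vert \leq 1$, and $c \in 1 + A$ since $b \in A$. The key observation is that producing the desired $g$ is equivalent to producing a contraction $h \in A^1$ with $1 - h \in A$ and $hq = qh = cq$: given such an $h$, set $g = \frac{1}{2}(1-h)$, so that $g \in A$, $\Vert 1 - 2g \Vert = \Vert h \Vert \leq 1$ forces $g \in \frac{1}{2} {\mathfrak F}_A$, and $gq = \frac{1}{2}(q - hq) = \frac{1}{2}(q - (1-2b)q) = bq$, and symmetrically $qg = bq$. (Note $\Vert bq \Vert \leq 1$ is then automatic, being $\tfrac12\Vert (1-c)q\Vert \le 1$, so the essential datum is really $\Vert cq \Vert \le 1$.)

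Next I would apply the peak interpolation theorem to the unital operator algebra $A^1$, whose bidual is $(A^1)^{**}$, together with the closed projection $q$ and the element $c$. Since $cq = qc$ and $\Vert cq \Vert \leq 1$, the result \cite[Theorem 5.1]{BRII}, in its two-sided form (available precisely because $bq = qb$, hence $cq = qc$), should yield a contraction $h \in {\rm Ball}(A^1)$ with $hq = qh = cq$. The commuting hypothesis is what lets the single interpolant match $cq$ on both sides at once.

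The remaining and principal point is to guarantee that $g = \frac{1}{2}(1-h)$ lies in $A$ rather than merely in $A^1$; equivalently, that the contractive interpolant $h$ can be taken in the coset $1 + A$ of the closed ideal $A$ inside $A^1$. This is the genuinely new ingredient: the plain theorem controls $\Vert h \Vert$ but says nothing about which coset of $A$ contains $h$, and indeed an arbitrary contraction interpolating $cq$ need not have $1-h \in A$. I expect this to be the main obstacle. I would resolve it by inspecting the construction underlying \cite[Theorem 5.1]{BRII}: there the interpolant is assembled from the datum $c$ by adding a (norm-convergent) series of correction terms, each a product of $c$, or of $1-c$, with factors that vanish on $q$ and belong to $A$; every such correction then lies in the ideal $A$, so $h - c \in A$ and therefore $h - 1 = (h-c) + (c-1) \in A$, placing $h$ in $1 + A$ as needed. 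The crux is thus not the norm estimate but this coset bookkeeping. Equivalently, and perhaps more cleanly, one can rerun the interpolation argument directly with target region the closed disk $\{ z \in \Cdb : |1 - 2z| \leq 1 \}$ in place of the unit disk; this disk is exactly the scalar model of $\frac{1}{2} {\mathfrak F}_A$, so the norm bound and the $\frac{1}{2} {\mathfrak F}_A$ membership are obtained simultaneously, sidestepping the reduction above.
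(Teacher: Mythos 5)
Your reduction is fine: setting $g = \frac{1}{2}(1-h)$ correctly converts the problem into finding a contraction $h$ in the coset $1 + A$ of $A^1$ with $hq = qh = (1-2b)q$, and you are right that the hypothesis $\Vert bq\Vert \le 1$ is then redundant. You also correctly locate the crux, namely the coset membership, but neither of your two suggested resolutions actually closes it. The first rests on a guess about the internal structure of the proof of \cite[Theorem 5.1]{BRII} (a norm-convergent series of correction terms lying in $A$) that does not match how that proof goes: it is not an iterative-correction argument but a quotient-lifting argument, so there are no ``correction terms'' whose membership in $A$ one could track. The second suggestion (rerun the interpolation with target region the disk $\{z : |1-2z|\le 1\}$) is the right intuition but is not a proof; moreover, if one tried to implement it with numerical-range lifting machinery (as is done for the Tietze-type Theorem \ref{peakthang2} via \cite{CSSW}), one would only control the numerical range of $1-2g$, not its norm, and so would not obtain $g \in \frac{1}{2}{\mathfrak F}_A$, which is a norm condition on $1-2g$.

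What the paper actually does is rerun the construction from the proof of \cite[Theorem 5.1]{BRII}: it produces a closed subalgebra $C$ of $A^1$ containing $b$ and $1$ with $q$ central in $C^{**}$, the ideal $I = C \cap A$ (which contains $b$), and an approximately unital ideal $D$ of $I$ with $D^{\perp\perp} = (1-q)I^{\perp\perp}$, so that $I/D$ is identified with $qI \subset q(A^1)^{**}q$ via $x + D \mapsto qx$. Under this identification the hypothesis $\Vert(1-2b)q\Vert \le 1$ says exactly that $b + D \in \frac{1}{2}{\mathfrak F}_{I/D}$, and the decisive ingredient --- the one missing from your plan --- is \cite[Proposition 6.1]{BRI}, which lifts elements of $\frac{1}{2}{\mathfrak F}$ of a quotient by an approximately unital ideal to elements of $\frac{1}{2}{\mathfrak F}$ of the algebra itself. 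Since the whole construction lives inside $I \subset A$, the lift $g$ is automatically in $A$ and the coset problem you worried about never arises. In short: correct setup and correct diagnosis of the obstacle, but the obstacle itself is left unresolved, and resolving it requires the $\frac{1}{2}{\mathfrak F}$-lifting result rather than coset bookkeeping in a correction series.
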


\begin{proof}  We modify the proof of \cite[Theorem 5.1]{BRII}.  In that 
proof a closed subalgebra $C$ of $A^1$ is constructed which 
contains $b$ and $1_{A^1}$, such that $q$ is in the center
of $C^{\perp \perp} \cong C^{**}$.    So $q^\perp$ supports a closed 
two-sided ideal $J$ in $C$.  
Then we set $I = C \cap A$, an ideal in $C$ containing $b$.
Finally, an $M$-ideal $D$ in $I$ was contructed there; this will be an 
approximately unital ideal in $I$.   Using the language 
of the proof of \cite[Theorem 5.1]{BRII},
since $P(I^{\perp}) \subset I^{\perp}$ it follows
that $I^{\perp \perp}$ is invariant under $P^*$.
By \cite[Proposition I.1.16]{HWW} we have that 
$I + \tilde{D}$ is closed, hence it 
follows similarly to the centered equation in the proof of \cite[Proposition 7.3]{BRI}, and the
two lines above it,  that 
$$D^{\perp \perp} = (I \cap \tilde{D})^{\perp \perp} =
I^{\perp \perp} \cap  \tilde{D}^{\perp \perp} =  
(1-q) C^{**} \cap I^{\perp \perp} = (1-q) I^{\perp \perp}.$$ 
Thus the $M$-projection from $I^{**}$ onto $D^{\perp \perp}$ is
multiplication by $1-q$, which is also 
the restriction of $P^*$ to $I^{\perp \perp}$.
Now $I/D$ is an operator algebra;
indeed it may be viewed, via the map $x + D \mapsto q x$,
as a subalgebra of $$(I/D)^{**} \cong
I^{**}/D^{\perp \perp} \cong q I^{\perp \perp} \subset q C^{**} \subset
q (A^1)^{**} q.$$   
Indeed it is not hard to see that $I/D$ may be regarded as an ideal in the unital subalgebra $C/J$
of $q C^{**}$, where $J$ was defined above.   

If  $\Vert (1-2b) q \Vert \leq 1$ then $bq
\in \frac{1}{2} {\mathfrak F}_{q(A^1)^{**} q}$, so that  
$b +D \in \frac{1}{2} {\mathfrak F}_{I/D}$.   Hence by \cite[Proposition
6.1]{BRI} there
exists $g \in \frac{1}{2} {\mathfrak F}_I
\subset \frac{1}{2} {\mathfrak F}_A$ with $g + D = b+D$.   
We have $g q = q g = bq$.  
\end{proof}

We will need a  
simple corollary of Meyer's theorem mentioned in the introduction:

\begin{lemma} \label{Meyer}   Suppose that $A$ and $B$ are closed subalgebras 
of unital operator algebras $C$ and $D$ respectively, with $1_C \notin A$ and $1_B \notin D$,
  and that $q : A \to B$ is a complete quotient map and homomorphism.  Then the unique unital extension 
of $q$ to a unital map from $A + \Cdb 1_C$ to $B + \Cdb 1_D$, is a complete quotient map.
\end{lemma}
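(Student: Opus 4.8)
The plan is to verify that the unital extension, which I will call $\tilde q : A^1 \to B^1$ (writing $A^1 = A + \Cdb 1_C$ and $B^1 = B + \Cdb 1_D$), is a complete quotient map by showing that the induced completely contractive map $\hat q : A^1/\ker \tilde q \to B^1$ is a complete isometry; since $\tilde q$ is clearly surjective, this is exactly what `complete quotient map' means. The map $\tilde q$ exists and is a unital completely contractive homomorphism by Meyer's theorem (recalled in the Introduction, \cite[Section 2.1]{BLM}), being the unique unital completely contractive extension of $q$.

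First I would identify the kernel. Because $1_D \notin B$, the decomposition $B^1 = B \oplus \Cdb 1_D$ is a genuine internal direct sum, so $\tilde q(a + \lambda 1_C) = q(a) + \lambda 1_D$ vanishes only when $\lambda = 0$ and $a \in \ker q$. Hence $\ker \tilde q = \ker q$, which is a closed two-sided ideal of $A^1$ (it is an ideal of $A$ since $q$ is a bounded homomorphism, and it absorbs products with $1_C$), proper because $1_C \notin A$. Thus $A^1/\ker q$ is a unital operator algebra with its quotient matrix norms.

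The crucial observation is that, since $\ker q \subseteq A$, for $a \in M_n(A)$ the coset $a + M_n(\ker q)$ inside $M_n(A^1)/M_n(\ker q)$ is literally the same set of elements as inside $M_n(A)/M_n(\ker q)$, and $M_n(A) \hookrightarrow M_n(A^1)$ isometrically; hence the quotient norm of this coset equals $\inf_{k \in M_n(\ker q)} \|a+k\|_{M_n(A)} = \|q_n(a)\|_{M_n(B)}$, the last equality because $q$ is a complete quotient map. Consequently $A/\ker q$ embeds completely isometrically in $A^1/\ker q$, and $\hat q$ restricts on it to a completely isometric algebra isomorphism onto $B$.

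This exhibits $A^1/\ker q$ as a unitization of $B$: a unital operator algebra containing the completely isometric copy $A/\ker q \cong B$ as a codimension-one subalgebra, with adjoined identity $\overline{1_C}$ lying outside that copy. Both $A^1/\ker q$ and $B^1$ are therefore unitizations of $B$, and $\hat q$ is a unital homomorphism between them restricting to the identity on $B$. By the uniqueness of the unitization up to completely isometric unital isomorphism --- Meyer's theorem again --- $\hat q$ must be a complete isometry, which is the desired conclusion. I expect the main obstacle to be this last step: cleanly recognizing $A^1/\ker q$ as the unitization of $B$ and checking that the canonical complete isometry supplied by Meyer's theorem really is $\hat q$ rather than some other unital isomorphism. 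The coset-coincidence remark above is what makes the identification of $A/\ker q$ inside $A^1/\ker q$ free of the usual subspace-quotient subtleties.
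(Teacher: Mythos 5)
Your proof is essentially the paper's own argument: identify $\ker \tilde q = \ker q \subset A$, observe that $A^1/\ker q$ and $B + \Cdb 1_D$ are both unitizations of $B \cong A/\ker q$, and invoke the uniqueness of the unitization to conclude that $\hat q$ is a complete isometry; your coset-coincidence observation is a nice explicit justification of a step the paper leaves implicit. The one point you gloss over is the subcase where $B$ is itself unital with identity distinct from $1_D$ (which the hypothesis $1_D \notin B$ permits): Meyer's theorem as cited concerns unitizations of \emph{nonunital} operator algebras, so that case needs the separate --- the paper calls it ``almost trivial'' --- uniqueness statement for adjoining a new identity to an already unital operator algebra, and the paper treats the two cases explicitly.
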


\begin{proof}  Let  $J = {\rm Ker} \, q$, let $\tilde{q} : A/J \to B$ 
be the induced complete isometry, and let 
$\theta : A + \Cdb 1_C \to B + \Cdb 1_D$ be the unique unital extension of $q$.    
This gives a one-to-one homomorphism $\tilde{\theta} : (A + \Cdb 1_C)/J \to B + \Cdb 1_D$
which equals $\tilde{q}$ on $A/J$.  If $B$, and hence $A/J$, is not unital then $\tilde{\theta}$ is 
a completely isometric isomorphism by Meyer's result mentioned in the Introduction
(since both $(A + \Cdb 1_C)/J$ and $B + \Cdb 1_D$ are `unitizations' of 
$A/J \cong B$).   Similarly, if $B$ is unital, then $\tilde{\theta}$ is 
a completely isometric isomorphism by the (almost trivial) 
uniquess of the unitization
of an already unital operator algebra.  So in either case we may deduce that
$\tilde{\theta}$ is a complete isometry and $\theta$ is a complete quotient map.  \end{proof}

The following is a noncommutative peak interpolation theorem which is
also, as discussed in the paragraph before Theorem  \ref{peakthang1},   a  kind of `Tietze theorem'.
 It also yields
a peak interpolation theorems `with positivity': 
if one insists that the set $E$ appearing here lies in the right half plane, or the usual `cigar' centered on $[0,1]$,
then the interpolation or extension is preserving `positivity' in our new sense.

\begin{theorem} \label{peakthang2}  {\rm (A noncommutative Tietze theorem)} \
 Suppose that $A$ is an operator algebra
(not necessarily approximately unital),
and that  $q$ is a closed projection in $(A^1)^{**}$.
Suppose that $b \in A$ with $b q= qb$ and $\Vert b q \Vert \leq 1$, and that  the numerical range of 
$bq$ (in e.g.\ $q (A^1)^{**}q$) 
is contained in a compact convex set $E$ in the plane.  We also suppose, by
fattening it slightly if necessary, that $E$ is not a line segment.   If both $A$ is nonunital and if $q \in A^{\perp \perp}$,  
then we will also insist that $0 \in E$.     Then there exists an element $g \in {\rm Ball}(A)$ with $g q =  q g = b q$, 
such that  the numerical range of 
$b$ with respect to  $A^1$ is contained in $E$.
\end{theorem}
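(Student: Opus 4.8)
The plan is to run the construction from the proof of Theorem~\ref{peakthang1} unchanged, and then to replace its final lifting step by a numerical-range-preserving Tietze lifting of the kind discussed at the end of \cite{BOZ}. First I would reproduce verbatim the setup there: build the closed subalgebra $C \subseteq A^1$ containing $b$ and $1$ with $q$ central in $C^{\perp \perp} \cong C^{**}$; let $J$ be the closed two-sided ideal of $C$ supported by $q^{\perp}$; put $I = C \cap A$, an ideal of $C$ containing $b$; and produce the approximately unital $M$-ideal $D$ in $I$ with $D^{\perp\perp} = (1-q) I^{\perp\perp}$. As in that proof, $I/D$ is an operator algebra which embeds completely isometrically as a subalgebra of $q(A^1)^{**} q$ via $x + D \mapsto qx$; the quotient map $I \to I/D$ is a complete quotient homomorphism, and under this identification $b$ maps to $bq$. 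Since $q$ is central in $C^{\perp\perp}$ and any lift $g$ lies in $I \subseteq C$, such a $g$ automatically satisfies $gq = qg$, and as $g - b \in D$ maps to $0$ we get $qg = qb = bq$; so $gq = qg = bq$, which is the desired interpolation.

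Next I would carry out the numerical range bookkeeping, where Lemma~\ref{Meyer} is the essential tool. Because $q$ is the identity of the image of $I/D$ inside the unital algebra $q(A^1)^{**}q$, and the numerical range of an element of a unital operator algebra is unchanged on passing to any unital subalgebra containing it and the common identity, the numerical range of $b+D$ computed in $(I/D)^1$ coincides with that of $bq$ in $q(A^1)^{**}q$, hence lies in $E$ by hypothesis. Lemma~\ref{Meyer} then upgrades $I \to I/D$ to a unital complete quotient map $I^1 \to (I/D)^1$ when $I/D$ is nonunital (when it is unital one works directly); this lets me phrase the lift in the unital category and afterwards read its numerical range back in $A^1$ (for $g \in I \subseteq A$ the numerical range is the same whether computed in $I^1$ or $A^1$, since states of the two unitizations restrict and extend to one another). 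This is also the point at which the hypothesis $0 \in E$ is explained. Suppose $A$ is nonunital and write $\chi$ for the unit character of $A^1$; it is a state, $A^{\perp} = \Cdb \chi$, and its multiplicative weak* extension satisfies $\chi(q) \in \{0,1\}$, with $\chi(q) = 0$ exactly when $q \in A^{\perp\perp}$. If $\chi(q) = 1$, i.e.\ $q \notin A^{\perp\perp}$, then $\chi$ restricts to a state of $q(A^1)^{**}q$ with $\chi(bq) = \chi(qbq) = \chi(b) = 0$, so $0$ already lies in the numerical range of $bq$ and thus $0 \in E$ automatically. If instead $\chi(q) = 0$, i.e.\ $q \in A^{\perp\perp}$, then every $g \in A$ has $\chi(g) = 0$ in its $A^1$-numerical range, so $E$ must contain $0$ for the conclusion to be attainable; this is exactly the case in which the theorem imposes $0 \in E$.

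Finally I would invoke the Tietze lifting result from \cite{BOZ}: along the unital complete quotient homomorphism $I^1 \to (I/D)^1$, the element $b+D$, whose numerical range lies in $E$ and which satisfies $\Vert b+D \Vert = \Vert bq \Vert \leq 1$, lifts to an element of norm $\leq 1$ with numerical range in $E$; as above the lift may be taken in $I \subseteq A$, giving $g \in \mathrm{Ball}(A)$ with $gq = qg = bq$ and numerical range in $E$ with respect to $A^1$. The hypotheses that $E$ be compact, convex, and (after fattening) not a line segment, together with $0 \in E$ in the relevant case, are precisely what this lifting requires: convexity rewrites ``numerical range in $E$'' as a family of real-positivity conditions $\overline{\gamma}\, x - \delta 1 \in {\mathfrak r}_{A^1}$, nonempty interior supplies the strict feasibility needed to satisfy them simultaneously, and $0 \in E$ accommodates the forced value $\chi(g) = 0$. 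I expect this simultaneous numerical-range-preserving lift to be the main obstacle: a metric surjection supplies norm-controlled lifts and a real-positive lifting handles each half-plane separately, but producing a single lift meeting the norm bound and all of the (generally uncountably many) half-plane constraints at once is the real content, and is where the geometry of $E$ enters.
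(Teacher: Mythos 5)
Your overall strategy is the paper's: reuse the $C, J, I, D$ machinery from Theorem \ref{peakthang1}, identify $I/D$ with $qI \subset qC^{**}$, and then lift along a unital complete quotient map using Lemma \ref{Meyer} together with the numerical-range-preserving lifting of \cite{CSSW} and the Claim at the end of \cite{BOZ}. Your analysis of \emph{why} $0 \in E$ is necessary (via the character $\chi$ of $A^1$, and the dichotomy $\chi(q) \in \{0,1\}$) is correct and matches the paper's closing Remark. However, there is a genuine gap in the sufficiency argument, in exactly the case your parenthetical ``when it is unital one works directly'' is meant to cover.

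The problem is the case where $A$ is nonunital but the quotient $I/D$ \emph{is} unital, with identity $q$ (this happens precisely when $q$ lies in the copy $qI$, which forces $q \in A^{\perp\perp}$). Here $(I/D)^1 = I/D$, so the unital extension provided by Lemma \ref{Meyer} is a map $I + \Cdb 1_C \to I/D$ sending $1_C \mapsto q$. A contractive lift $g = x + \lambda 1_C$ with numerical range in $E$ produced by \cite{CSSW} then satisfies $g + D = x + D + \lambda q$, and since $q \in I/D$ the scalar $\lambda q$ is absorbed into the quotient: nothing forces $\lambda = 0$, so $g$ need not lie in $I \subset A$. The ``as above'' argument you invoke to place the lift in $I$ works only when $q \notin qI$ (the paper's second case), and ``working directly'' is unavailable because $I$ itself is nonunital, so the domain of the quotient map must be unitized before \cite{CSSW} applies. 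The paper resolves this by replacing $I \to I/D$ with $I \oplus c_0 \to (I/D) \oplus c_0$ and unitizing with $\Cdb(1_C, \vec 1)$: the $c_0$ coordinate of the lift forces the scalar part to vanish, so the lift is of the form $(g,0)$ with $g \in I$. This is also where $0 \in E$ actually enters the sufficiency proof, via $E = [0,1]E$ and the fact that the numerical range of $(x,0)$ in $A_1 \oplus^\infty A_2$ is $[0,1]\,W_{A_1}(x)$ --- without $0 \in E$ the passage to the direct sum would enlarge the numerical range outside $E$. Your proposal identifies where $0 \in E$ is forced but never uses it, which is a symptom of the missing case.
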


\begin{proof}   This is the same as the last proof except that the last paragraph should 
be replaced by the following.  Suppose that the numerical range $W_{qC^{**}}(bq)$ lies in the convex set $E$ described.
If $1_{A^1} \in I$ (which is the case for example if $A$ is unital) then $I/D$ viewed  in $qC^{**}$ as above has identity $q$.   Then the 
numerical range of $b + D$ in $I/D$ 
 is a subset of $E$.  
By \cite[Theorem 3.1]{CSSW}
and the Claim at the end of \cite{BOZ}, there
exists a contractive lift $g \in I \subset C$ with numerical range with respect to $C$,
and hence with respect to $A^1$, contained in $E$.    We have  $g q = q g = bq$  since 
 $g + D = b+D$.    This proves the result.  Thus henceforth we can assume that
$1_C = 1_{A^1} \notin I$ and that $A$ is nonunital.  

Next 
suppose that the copy $qI$ of $I/D$ in $qC^{**}$ above does not contain $q$.  This will be the case 
for example if  $q \notin A^{\perp \perp}$ (for if 
$q = qx$ for some $x \in I$ then $q \in q A \subset A^{\perp \perp}$, since the latter
is an ideal in $(A^1)^{**}$). 
 By Lemma \ref{Meyer} we can extend the 
quotient map $I \to I/D$ to a complete quotient map 
$\theta : I + \Cdb 1_C \to  I/D + \Cdb q$ (the latter viewed as above in $qC^{**}$).  
By \cite[Theorem 3.1]{CSSW}
and the Claim at the end of \cite{BOZ}, there
exists a contractive lift $g \in I + \Cdb 1_C$ with numerical range with respect to $C$,
and hence with respect to $A^1$, contained in $E$.   If $g = x + \lambda 1_C$ with $x \in I$
then
$b + D = g + D = \lambda q + x + D \in I/D$, which forces $\lambda = 0$.
So  $g \in I \subset A$,
and $g q = q g = bq$ again as above.  
 Finally, suppose that $I/D$ contains $q$, and $0 \in E$, so that $E = [0,1]E$.
Here $q$ is the identity of $C/J$ (viewed  as above in $qC^{**}$).
Since $I/D$ is an ideal, we have $I/D = C/J$.   
Consider $I \oplus c_0$ 
and its ideal $D \oplus (0)$. 
The quotient here is $(I/D) \oplus c_0$, which may be viewed 
as a subalgebra of $q C^{**} \oplus c$.   
The numerical range  of an element $(x,0)$ in a direct sum
$A_1 \oplus^\infty A_2$ of unital Banach algebras is easily seen to be 
$[0,1] W_{A_1}(x)$.   Hence the numerical range  of 
$(b+I,0)$ in $(I/D) \oplus c$ is contained in $[0,1]E =E$.   By Lemma \ref{Meyer} 
the canonical complete quotient map $I \oplus c_0 \to (I/D) \oplus c_0$ 
extends to a unital
 complete quotient map $(I \oplus c_0) + \Cdb (1_C,\vec 1) \to 
(I/D) \oplus c$.    
By  \cite[Theorem 3.1]{CSSW}
and the Claim at the end of \cite{BOZ}, there
exists a contractive lift  $(g,0) \in (I \oplus c_0) + \Cdb (1_C,\vec 1)$ whose
numerical range in the latter space, and hence
in $C \oplus c$,  is contained in $E$.   By the Banach algebra sum fact a few lines earlier,
we deduce that $W_C(g) \subset  E$, and hence $W_{A^1}(g) \subset E$.
Clearly $g \in I \subset A$, and $g q = q g = bq$ as before.  
 \end{proof}   

{\bf Remark.}    By considering examples such as $C_0((0,1])/C_0((0,1)) \cong \Cdb$ one sees
the necessity of the condition $0 \in E$ if $A$ is nonunital and $q \in A^{\perp \perp}$.  As in 
\cite{CSSW}, by considering the quotient of the disk algebra by an approximately unital  codimension 2 ideal,
 one sees
the necessity of the condition that $E$ not be a line segment.

\bigskip

Our best noncommutative peak interpolation result \cite[Theorem 3.4]{Bnpi} (and its variant \cite[Corollary 5.4]{BRII}) should also have `positive/Tietze versions' analogous to the two cases
considered in  Theorems \ref{peakthang1} and  \ref{peakthang1} above.
However there is an obstacle to using the approach for the latter results to improve
\cite[Theorem 3.4]{Bnpi}  say.  Namely the quotient one now has to deal with is $(If)/(Df)$ as opposed
to $I/D$.
(We remark that unfortunately  in the proof of \cite[Theorem 3.4]{Bnpi}
we forgot to repeat that $f = d^{-\frac{1}{2}}$,
as was the case in the earlier proof from that paper that it is mimicking.)    This is not
an operator algebra quotient, and so we are not sure at this point how to deal with it.   
We remark that  the Tietze variant here initially seems promising, since the key tool above
 used in that case is the numerical range lifting result from \cite{CSSW}, and this is stated 
in that paper in remarkable generality.  However we were not able to follow the proof of
the latter in this generality, although as we said at the end of \cite{BOZ} we were able to 
verify it in the less general setting needed in the last proof, and in \cite{BOZ}.

\medskip

\medskip

{\em Acknowledgements.}   As we said earlier, much of 
the present paper was formerly part of ArXiV preprint \cite{BRIII} from 2013.  The first author wishes to thank the departments at the Universities of 
Leeds and Lancaster, and in particular the second author and Garth Dales,
for their warm hospitality during a visit in April--May 2013.     We also gratefully acknowledge   support from UK research council
grant  EP/K019546/1
for largely funding that visit.  We thank Ilya Spitkovsky 
for some clarifications regarding roots of operators and results in \cite{MP}, and Alex Bearden for spotting several typos
in the manuscript.   Many of the more recent results here were found or 
presented during  the Thematic Program (on abstract harmonic analysis, Banach and operator algebras) and COSy 2014 at the Fields Institute, whom we thank for 
 the opportunity to visit and speak.

\end{document}